\documentclass{article}
\usepackage[margin=1in,footskip=0.25in]{geometry}
\usepackage{authblk}
\usepackage{inputenc}
%

%
%
%


\usepackage{times,amsmath,amsthm,amssymb,thmtools,thm-restate}
\usepackage{bm}
\usepackage{thmtools,thm-restate,xargs,hyperref}
\usepackage{extarrows}
\usepackage[dvipsnames]{xcolor}
\usepackage{mathrsfs}
\usepackage{soul}
\usepackage[numbers]{natbib}

\newcommand{\UT}{\mathsf{T}}   
\declaretheorem{lemma}
\declaretheorem{proposition}
\declaretheorem{corollary}
\declaretheorem{definition}
\declaretheorem{remark}
\declaretheorem{fact}
\declaretheorem{observation}
\declaretheorem{theorem}

\newcommand{\E}{\mathbb{E}}
\newcommand{\hatE}{\hat{\E}}
\renewcommand{\P}{\mathbb{P}}
\newcommand{\Tr}{\mathsf{Tr}}


\newcommand{\N}{\mathbb N}
\newcommand{\W}{\mathbb{N}_0}
\newcommand{\R}{\mathbb R}
\newcommand{\C}{\mathbb C}
\newcommand*\diff{\mathop{}\!\mathrm{d}}
\newcommand{\gauss}[2]{\mathcal{N}\left( #1,#2 \right)}

\newcommand{\explain}[2]{\overset{\text{\tiny{#1}}}{#2}}

\newcommand{\Indicator}[1]{\mathbb{I} \left( #1 \right)}
\newcommand{\ip}[2]{\langle {#1}, {#2} \rangle}

\newcommand{\mult}[2]{\mathsf{Mult} \left( #1, #2 \right)}
\newcommand{\supp}[1]{\mathsf{Supp} \left( #1 \right)}
\newcommand{\pois}[1]{\mathsf{Pois} \left( #1 \right)}
\newcommand{\unif}[1]{\mathsf{Unif} \left( #1 \right)}

\newcommand{\vstat}[2]{\mathsf{VSTAT}_{#1}{\left(#2 \right)}}
\newcommandx{\sdn}[3][1=\text{$D_0$}, 3=\text{$\mathcal{D}(\pi)$}]{\mathsf{SDN}\text{$\left( #1, #3, #2 \right)$}}
\newcommand{\tpca}[3]{\mathsf{TPCA}(#1,#2,#3)}
\newcommand{\tensor}[2]{\bigotimes^{#2} #1}
\newcommandx{\Tsum}[3][3=]{\text{$\mathsf{SUM}_{#3}\left( #1, #2 \right)$}}
\newcommandx{\Tpar}[3][3=]{\text{$\mathsf{PARITY}_{#3}\left( #1, #2 \right)$}}
\newcommand{\Rank}{\mathsf{Rank}}
\newcommand{\oddness}{\mathsf{o}}

\newcommand{\basis}[1]{\mathsf{BASIS} \left( #1 \right)}
\newcommand{\BoolSpace}[1]{\mathcal{F}_{\mathsf{BOOLEAN}}({#1})}
\newcommand{\GaussSpace}[1]{\mathcal{L}_2 \left( \gauss{\bm 0}{\bm I_{#1}} \right)}
\newcommand{\smooth}[2]{\mathsf{SMOOTH}_{#2}[#1]}

\newcommand{\change}[1]{#1}





\begin{document}



\title{Statistical Query Lower Bounds for Tensor PCA}
\author[1]{Rishabh Dudeja\thanks{rd2714@columbia.edu}}
\author[2]{Daniel Hsu\thanks{djhsu@cs.columbia.edu}}
\affil[1]{Department of Statistics, Columbia University}
\affil[2]{Department of Computer Science, Columbia University}

\renewcommand\Authands{ and }

\maketitle

\begin{abstract}In the Tensor PCA problem introduced by \citet{richard2014statistical}, one is given a dataset consisting of $n$ samples $\bm T_{1:n}$ of i.i.d.~Gaussian tensors of order $k$ with the promise that $\E\bm T_1$ is a rank-1 tensor and $\|\E \bm T_1\| = 1$. The goal is to estimate $\E \bm T_1$. This problem exhibits a large conjectured hard phase when $k>2$: When $d \lesssim n \ll d^{\frac{k}{2}}$ it is information theoretically possible to estimate $\E \bm T_1$, but no polynomial time estimator is known. We provide a sharp analysis of the optimal sample complexity in the Statistical Query (SQ) model and show that SQ algorithms with polynomial query complexity not only fail to solve Tensor PCA in the conjectured hard phase, but also have a strictly sub-optimal sample complexity compared to some polynomial time estimators such as the  Richard-Montanari spectral estimator. Our analysis reveals that the optimal sample complexity in the SQ model depends on whether $\E \bm T_1$ is symmetric or not. For symmetric, even order tensors, we also isolate a sample size regime in which it is possible to test if $\E \bm T_1 = \bm 0$ or $\E \bm T_1 \neq \bm 0$ with polynomially many queries but not estimate $\E \bm T_1$. Our proofs rely on the Fourier analytic approach of \citet{feldman2018complexity}  to prove sharp SQ lower bounds.
\end{abstract}

\section{Introduction}

In the Tensor PCA testing problem, one is given a dataset consisting of $n$ i.i.d.~tensors $\bm T_{1:n} \in \tensor{\R^d}{k}, \; k \geq 2$, and the goal is to determine if the tensors were drawn from the null hypothesis $D_0$,
\begin{align*}
D_0:  \;  (T_{i})_{j_1,j_2,\dotsc, j_k} &\explain{i.i.d.}{\sim} \gauss{0}{1} \; \forall \; j_1,j_2,\dotsc, j_k \in [d], \; \forall \; i \in [n],
\end{align*} 
or from a distribution in the composite alternate hypothesis,
\begin{align*}
\mathcal{D} & = \{D_{\bm V}: \bm V \in \tensor{\R^d}{k}, \;  \|\bm V\| = \sqrt{d^k}, \; \Rank(\bm V) = 1\},
\end{align*}
where under the alternate hypothesis $D_{\bm V}$ the entries of the tensor are distributed as
\begin{align*}
D_{\bm V}: \; (T_{i})_{j_1,j_2, \dotsc, j_k} &\explain{i.i.d.}{\sim} \gauss{\frac{V_{j_1,j_2, \dotsc, j_k}}{d^{\frac{k}{2}}}}{1} \; \forall \; j_1,j_2 \dotsc, j_k \in [d], \; \forall \; i \in [n].
\end{align*}
In the Tensor PCA estimation problem, given a dataset $\bm T_{1:n}$ sampled i.i.d.~from an unknown distribution $D \in \mathcal{D}$, the goal is to estimate the \emph{signal tensor} $\E_D \bm T = \bm V / \sqrt{d^k}$.

When $k=2$, this model is called the Spiked Wigner Model and is often attributed to \citet{johnstone2001distribution}, who introduced a variant of this model as a non-null model for Principal Component Analysis. For general $k$, this model was introduced by \citet{richard2014statistical}\footnote{In the standard formulation of Tensor PCA introduced by \citet{richard2014statistical} one only observes the empirical average of the tensors $\overline{\bm T}$. However this is equivalent to the model studied here since given the $n$ samples $\bm T_{1:n}$, one can compute $\overline{\bm T}$, and given $\overline{\bm T}$, one can generate an i.i.d.~sample $\bm T_{1:n}$ without knowing $\E \bm T$ since $\overline{\bm T}$ is a sufficient statistic for $\E \bm T$.}, who envisioned this model as stylized setup to study statistical and computational phenomena occurring in higher order moment based estimators for fitting latent variable models \citep{anandkumar2014tensor}. In these applications, a suitable sample moment tensor can also be decomposed as a sum of a low rank population tensor, which contains information about parameters of interest, and a random tensor arising due to finite sample fluctuations.

The most striking feature of the Tensor PCA problem is that it exhibits a large gap between the information theoretically optimal performance and the performance achieved by computationally efficient estimators when $k>2$. Estimating the mean of a Gaussian tensor in general requires $n \asymp d^k$ samples. However, in the Tensor PCA problems since the mean is low rank, the information theoretic sample complexity requirement is $n \asymp d$. For example, when the signal tensor is symmetric, it is known that there exists a critical value $\beta^\star_k$ such that if $n < (\beta^\star_k - o_d(1)) \cdot d$, then it is information theoretically impossible to solve the Tensor PCA testing or estimation problems \citep{montanari2015limitation}. On the other hand, the results of \citet{richard2014statistical} imply the existence of another threshold $\beta^{\sharp}_k$ such that if $n > (\beta^{\sharp}_k + o_d(1)) \cdot d$, then tests and estimators based on the maximum likelihood value:
\begin{align}
    \max_{\|\bm x\| = 1} \overline{\bm T}(\bm x, \bm x, \dotsc ,\bm x) \label{MLE}, \hspace{0.4cm} \overline{\bm T} \explain{def}{=} \frac{1}{n} \sum_{i=1}^n \bm T_i,
\end{align}
have a non-trivial testing and estimation performance. When $k>2$, no computationally efficient algorithm to solve the optimization problem in Eq.~\ref{MLE} is known. \citeauthor{richard2014statistical} have proposed a computationally tractable relaxation of the optimization problem that flattens the mean tensor $\overline{\bm T}$ into a $d^{\lceil \frac{k}{2} \rceil} \times d^{\lfloor \frac{k}{2} \rfloor}$ matrix and uses the largest singular value and vector of the resulting matrix for testing and estimation respectively. This procedure requires $n \gtrsim d^{\frac{k}{2}}$ to solve the testing and estimation problems \citep{hopkins2015tensor,zheng2015interpolating}. To this date no known computationally efficient procedure improves on the performance of the Richard-Montanari spectral method. Consequently it is conjectured that any computationally efficient testing or estimation procedure requires $n \gtrsim d^{\frac{k}{2}}$ samples. \change{Proving this conjecture is expected to be more difficult than resolving P v.s. NP since proving computational lower bounds for average-case problems is harder than proving computational lower bounds for worst-case problems. However, recent works have exhibited some evidence that lends credibility to this conjecture. } 

In this paper, we study lower bounds on a class of algorithms for Tensor PCA: those that can be implemented in Statistical Query (SQ) computational model introduced by \citet{kearns1998efficient}. In the SQ Model, the estimation or testing procedure does not have direct, unrestricted access to the data $\bm T_{1:n}$ generated from the unknown distribution $D \in \mathcal{D} \cup \{D_0\}$, but instead can access the data only through an oracle. The algorithm can query the oracle with a statistic $q$ of interest, and the oracle returns an estimate of the expected value of the statistic $\E_D q(\bm T)$. The response of the oracle, denoted by $\hatE_D q(\bm T)$, has an error guarantee comparable to the error guarantee of the mean of the statistic on the dataset, but can be otherwise arbitrary (see Definition~\ref{VSTAT_def}); that is,
\begin{align}
    \left| \hatE_D q(\bm T) - \E_D q(\bm T) \right| & \approx \left| \frac{1}{n} \sum_{i=1}^n q(\bm T_i) - \E_D q(\bm T) \right|.
\label{VSTAT_error_guarantee_informal}
\end{align}
The SQ framework to study computational-statistical gaps aims to show that in the regime where no polynomial time estimators are known for the learning problem, no SQ algorithm can solve the learning problem after making only polynomially many queries. We emphasize that a successful SQ algorithm must work with an arbitrary oracle whose errors $|\hatE_D q(\bm T) - \E_D q(\bm T)|$ are possibly adversarial but do satisfy the requirement of Eq.~\ref{VSTAT_error_guarantee_informal}, and not just the most natural sample mean oracle which estimates $\E_D q(\bm T)$ by the empirical mean of $q(\cdot)$ on $n$ samples drawn from $D$. This strong robustness requirement which is expected in the SQ model is what makes it possible to prove strong lower bounds in this model.  At the same time, one hopes that though most practical procedures require direct access to the samples $\bm T_{1:n}$, they are essentially computing certain adaptively chosen statistics $q$ on the dataset. Moreover, one hopes that their analysis only relies on the fact that the sample mean of these statistics satisfies an error guarantee of the form given in Eq.~\ref{VSTAT_error_guarantee_informal}. It would then be possible to implement these procedures in the SQ model without any degradation in their performance and hence, lower bounds proved in the SQ model would apply to these procedures. 
 
 Indeed, these hopes have been realized in a number of problems which exhibit statistical-computational gaps. Two notable examples are the Planted Clique problem \citep{feldman2017statistical} and sparse PCA \citep{wang2015sharp}. Both of these problems exhibited a gap between the information theoretically optimal performance and the performance of the best known polynomial time estimator. The performance of the best known polynomial time estimator can be achieved in the SQ model and moreover is the best possible performance in the SQ model. Hence, for these problems the SQ Model provides a framework to prove lower bounds on the sample complexity of a class of computationally tractable estimators which include the best known estimators for this problem. Furthermore, the SQ lower bounds for these problems provide an intuitive \emph{mechanistic explanation} for the observed statistical-computational gap based on the informational structure of the problem: In order to declare that the unknown distribution $D \not \in \mathcal{D}$, an SQ algorithm must rule out all alternate hypothesis $D_1 \in \mathcal{D}$ by asking queries such that $|\hat\E_D q(\bm T) - \E_{D_1} q(\bm T)|$ is more than the finite sample fluctuations expected from $n$ samples. The SQ lower bound proceeds by exhibiting an \emph{exponentially} large (in $d$) collection of distributions $\mathcal{H} \subset \mathcal{D}$ such that even the single most powerful query is able to rule out only a small fraction of the distributions in $\mathcal{H}$ (more precisely, $o(d^{-t})$ fraction for any $t \in \mathbb N$). Consequently, any SQ algorithm must make a super-polynomial number of queries to solve these problems in the regimes where they are believed to be computationally hard. More recently, SQ lower bounds have also been used to reason about computational hardness of various robust estimation problems \citep{diakonikolas2017statistical,diakonikolas2019efficient}. We also refer the reader to the recent review article on the SQ model \citep{reyzin2020statistical}.

\subsection{Problem Definition and the Statistical Query Model}

Recall that in the alternative hypothesis for Tensor PCA, $\E \bm T = \bm V/\sqrt{d^{k}}$, where $\bm V$ is a rank-1 tensor with $\|\bm V\| = \sqrt{d^k}$. Hence we can write $\bm V$ as:
\begin{align*}
    \bm V & = \bm v_1 \otimes \bm v_2 \dots \otimes \bm v_k, \; \bm v_i \in \R^d, \; \|\bm v_i\| = \sqrt{d} \; \forall \;  i \; \in \; [k].
\end{align*}
In the standard symmetric version of Tensor PCA introduced by \citeauthor{richard2014statistical}, one further assumes $\bm v_1 = \bm v_2 = \dotsb = \bm v_k$. However, the completely asymmetric version where this constraint is not enforced is also common in the literature \citep{zheng2015interpolating,zhang2018tensor}. It turns out that the asymmetric Tensor PCA problems are harder to solve for SQ algorithms that make only polynomially many queries. In order to get more insight into this phenomena, in this paper, we study a generalization of Tensor PCA to incorporate partial symmetries where only some of the $\bm v_i$'s are identical. This interpolates between symmetric Tensor PCA on one end and completely asymmetric Tensor PCA on the other end. The generalization is parametrized by integers $k, K \in \N$ with $K \leq k$ and a labelling function $\pi: [k] \rightarrow [K]$ such that, under the alternate hypothesis,
\begin{align*}
    \E \bm T & = \frac{1}{\sqrt{d^k}} \cdot \bm v_{\pi(1)} \otimes \bm v_{\pi(2)} \otimes \dotsb \otimes \bm v_{\pi(k)}.
\end{align*}
For example, if $k=3,K=2$ and $\pi(1) = \pi(2) = 1, \pi(3) = 2$, then $\sqrt{d^3} \cdot \E \bm T = \bm v_1 \otimes \bm v_1 \otimes \bm v_2$, for two, possibly distinct $\bm v_1,\bm v_2 \in \R^d$ with $\|\bm v_1\| = \|\bm v_2\| = \sqrt{d}$.
Hence, we define the composite alternative hypothesis:
\begin{align*}
    \mathcal{D}(\pi)& = \{ \mathcal{D}_{\bm V}: \; \bm V  = \bm{v}_{\pi(1)} \otimes \bm{v}_{\pi(2)} \otimes \dotsb \otimes \bm{v}_{\pi(k)}, \; \bm{v}_{1:K} \in \R^d, \; \|\bm v_i\| = \sqrt{d} \; \forall \; i \; \in \; [K]   \}.
\end{align*}
In order to prove the strongest possible lower bounds, we will assume the algorithm knows the labelling function $\pi$. In the (generalized) Tensor PCA testing problem, the testing algorithm is  given data $\bm T_{1:n}$ sampled i.i.d.~from some unknown $D \in \{D_0\} \cup \mathcal{D}(\pi)$ and has to decide if $D  = D_0$ or $D \in \mathcal{D}(\pi)$. In the (generalized) Tensor PCA estimation problem, the data is generated from an unknown $D \in \mathcal{D}(\pi)$ and one seeks to estimate $\E_D \bm T$ with small error. 

We study tests and estimators for Tensor PCA in the Statistical Query framework, where the algorithm can access the data only through the VSTAT oracle introduced by \citet{feldman2017statistical}.
\begin{definition}[VSTAT oracle for Tensor PCA] \label{VSTAT_def} For any $D \in \{D_0\} \cup \mathcal{D}(\pi)$ and any $n \in \N$, \;  $\vstat{D}{n}$ is an oracle that, when given as input an arbitrary function $q: \tensor{\R^d}{k} \mapsto [0,1]$, returns an estimate $\hat \E_D q(\bm T)$ of the expectation $\E_D q(\bm T)$ with the error guarantee:
\begin{align}
| \hatE_D q(\bm T) - \E_D q(\bm T) | & \leq \max \left( \frac{1}{n} , \sqrt{\frac{\E_D q(\bm T) \cdot (1- \E_D q(\bm T))}{n}}\right). \label{VSTAT_error_guarantee}
\end{align}
The parameter $n$ is known as the \emph{effective sample size} of the oracle. 
\end{definition}
The rational for defining the oracle in this way is that the error guarantee of the oracle given in Eq.~\ref{VSTAT_error_guarantee} is comparable to the error guarantee obtained if one estimated $\E_D  q(\bm T)$ with its sample average on a dataset of size $n$ drawn from the distribution $D$. This is also why the parameter $n$ in $\vstat{D}{n}$ is interpreted as the effective sample size. Our goal is to understand how large must the effective sample size $n$ be for there to exist an SQ algorithm to solve the Tensor PCA testing problem by making only polynomial number of queries in the following sense:

\begin{definition}[Tensor PCA Testing in the SQ Model] \label{tester_def} An algorithm $\mathcal{A}$ \emph{solves the Tensor PCA testing problem in the SQ model with $n$ samples and $B$ queries} if, for any $D \in \{D_0\} \cup \mathcal{D}(\pi)$ (unknown to $\mathcal{A}$), given access to an arbitrary $\vstat{D}{n}$ oracle, $\mathcal{A}$ correctly decides if $D= D_0$ or $D \in \mathcal{D}(\pi)$  after making at most $B$ (possibly adaptive) queries to $\vstat{D}{n}$.
\end{definition}

\begin{definition}[Tensor PCA Estimation in the SQ Model] \label{estimator_def} An algorithm $\mathcal{A}$ \emph{solves the Tensor PCA estimation problem in the SQ model with $n$ samples and $B$ queries} if, for any $D \in \mathcal{D}(\pi)$ (unknown to $\mathcal{A}$), given access to an arbitrary $\vstat{D}{n}$ oracle, $\mathcal{A}$ returns an estimate $\hat{\bm V}$ such that
\begin{align*}
    \|\hat{\bm V} - \E_D \bm T\| & \leq \frac{1}{4}, 
\end{align*}
after making at most $B$ (possibly adaptive) queries to $\vstat{D}{n}$.
\end{definition}

The sample complexity of Tensor PCA in the Statistical Query depends on some structural properties of the labelling function $\pi$. For any $i \in [K]$, let $s_i$ be the number of times label $i$ is used in the labelling function $\pi$:
\begin{align}
    s_i & \explain{def}{=} |\pi^{-1}(i)| = |\{j \in [k]: \pi(j) = i\}|. \label{s_def}
\end{align}
The sample complexity of Tensor PCA depends crucially on the number of labels that are used an odd number of times. We denote this parameter by $\oddness$ and call it the \emph{oddness parameter} of the labelling function $\pi$, i.e., $\oddness = | \{i \in [K]: s_i \text{ is odd}\}|$. Note that $\oddness \in \{0,1,\dotsc, k\}$.

 \subsection{Our Contribution}
 Our main result is the following SQ lower bound for Tensor PCA.
  \begin{theorem} \label{main_result}
	Let $C_0 \geq 0$ and $\epsilon \in (0,1)$ be arbitrary constants. Consider the learning problems:
	\begin{enumerate}
	    \item Tensor PCA testing with $\oddness = 0$ and sample size $n \leq C_0 d^{\frac{k}{2} - \epsilon}$.
	    \item Tensor PCA estimation with $\oddness = 0$ and sample size $ n \leq C_0 d^{\frac{k+2}{2}-\epsilon}$
	    \item Tensor PCA testing or estimation with $\oddness \geq 1$ and sample size $n \leq C_0 d^{\frac{k+\oddness}{2} - \epsilon}$.
	\end{enumerate} 
	Then for any $L \in \N$, there exists a positive constant $c_1(k,L,\epsilon,C_0)>0$ depending only on $k,L,\epsilon,C_0$ and a finite constant $C_1(k,L,\epsilon)< \infty$ depending only on $k,L,\epsilon$ such that for all $d \geq C_1(k,L,\epsilon)$, any SQ algorithm that solves any of the above problems  must make at least $c_1(k,L,\epsilon, C_0) \cdot d^L$ queries.
\end{theorem}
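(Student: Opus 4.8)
The plan is to follow the Fourier-analytic recipe for statistical query lower bounds: exhibit a large ``planted ensemble'' of alternatives that is almost orthogonal to a reference distribution, so that no single bounded query can correlate non-trivially with more than a $d^{-\Omega(L)}$ fraction of the ensemble, and then invoke the general framework of \citet{feldman2017statistical,feldman2018complexity} to convert this into a $d^{L}$ query lower bound. For the testing statements the reference is $D_0$ and the ensemble is drawn from $\mathcal D(\pi)$. For the estimation statements I would first reduce ``estimate $\E_D\bm T$ to accuracy $1/4$'' to the search problem of identifying the true element of a $1/2$-separated packing of the unit signal tensors $\{\bm V/\sqrt{d^k}: D_{\bm V}\in\mathcal D(\pi)\}$ --- any $\hat{\bm V}$ within $1/4$ of $\E_D\bm T$ pins down the unique packing element within $1/4$ of it --- and then lower bound the statistical dimension of that search problem, now with the uniform mixture over the packing as the reference.

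\textbf{The planted ensemble and its Fourier structure.} I would draw the hidden vectors $\bm v_1,\dots,\bm v_K$ independently and uniformly from $\{\pm 1\}^d$ (equivalently, from the sphere of radius $\sqrt d$), inducing a distribution $\mu$ on $\bm V=\bm v_{\pi(1)}\otimes\dots\otimes\bm v_{\pi(k)}$ and hence on $D_{\bm V}\in\mathcal D(\pi)$. Since $D_0$ and $D_{\bm V}$ are Gaussians with a common covariance and means differing by $\bm V/\sqrt{d^k}$, the likelihood ratio $L_{\bm V}\explain{def}{=}\diff D_{\bm V}/\diff D_0$ is explicit, and expanding a test function $q:\tensor{\R^d}{k}\to[0,1]$ in Hermite polynomials of $D_0$ gives $\langle q,L_{\bm V}-1\rangle_{D_0}=\sum_{\ell\ge1}\tfrac{1}{\ell!}\langle\hat q_\ell,\bm V^{\otimes\ell}\rangle\,d^{-k\ell/2}$, where $\hat q_\ell\in\tensor{\R^d}{k\ell}$ is the degree-$\ell$ Hermite coefficient of $q$. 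The quantity governing testing is the discrimination operator $M\explain{def}{=}\E_{\bm V\sim\mu}[(L_{\bm V}-1)^{\otimes2}]$, which is block diagonal across Hermite degrees with degree-$\ell$ block proportional to $\E_{\bm V\sim\mu}[\bm V^{\otimes2\ell}]/d^{k\ell}$. Because $\bm V^{\otimes2\ell}$ is, up to a permutation of tensor factors, $\bigotimes_{i=1}^K\bm v_i^{\otimes2\ell s_i}$, this block factorizes over labels, and the elementary input is that the operator norm of $\E[\bm v_i^{\otimes2p}]$, viewed as an operator on $\tensor{\R^d}{p}$, is $\asymp d^{\lfloor p/2\rfloor}$ --- the maximizing direction being the ``identity flattening'' $(\sum_j\bm e_j\otimes\bm e_j)^{\otimes p/2}$ when $p$ is even, and one power of $d$ smaller when $p$ is odd because an odd number of copies cannot be completely paired. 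Multiplying over $i$ (and using $\oddness\le k$ to see that degree $\ell=1$ dominates) yields $\|M\|\asymp d^{-(k+\oddness)/2}$. For the estimation statements with $\oddness=0$, a query's component along these identity-flattening directions equals a fixed function of $\|\bm v_1\|,\dots,\|\bm v_K\|$, hence is constant over the whole packing and carries no information for the search problem; restricting to the orthogonal complement lowers the norm of the relevant discrimination operator by exactly one further power of $d$, to $d^{-(k+2)/2}$.

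\textbf{From moments to a query lower bound.} The bound $\|M\|\asymp d^{-(k+\oddness)/2}$ together with the $[0,1]$-boundedness identity $\|q-\E_{D_0}q\|_{D_0}^2\le\E_{D_0}q$ --- which exactly offsets the reduced VSTAT tolerance $\sqrt{\E_{D_0}q(1-\E_{D_0}q)/n}$ on low-probability queries --- already shows that a single query to $\vstat{D}{n}$ can ``catch'' (i.e.\ distinguish $D_0$ from) at most an $O(n\,d^{-(k+\oddness)/2})$ fraction of the ensemble, but by itself this only gives a $d^{\Omega(\epsilon)}$ query bound. To reach $d^L$ for arbitrary $L$ one needs the analogous control on $2m$-th moments: for every $m$ and every $[0,1]$-valued $q$ with $h=q-\E_{D_0}q$, $\E_{\bm V\sim\mu}[\langle h,L_{\bm V}-1\rangle_{D_0}^{2m}]\le\big(C(k,m)\,\|h\|_{D_0}^2\,d^{-(k+\oddness)/2+\delta}\big)^m$ for a constant $C(k,m)$ and any fixed $\delta>0$, obtained by expanding the $2m$-th power into Hermite/moment-tensor contractions and bounding the resulting combinatorial sum over the $2m$ index tuples --- crucially using the pure tensor-power structure of $h^{\otimes2m}$ (a crude operator-norm bound would be too lossy here since it permits entangled test vectors), together with the parity structure of the odd-multiplicity labels. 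Given this, the per-query catch fraction is $\le\big(C(k,m)\,C_0\,d^{-\epsilon+\delta}\big)^m$; taking $\delta<\epsilon$ and then $m=m(k,L,\epsilon)$ of order $L/\epsilon$ makes this at most $d^{-(L+1)}$ once $d\ge C_1(k,L,\epsilon)$, and the standard argument --- the adversarial oracle answers every query with its reference value whenever VSTAT permits, so a correct algorithm must catch all but a vanishing fraction of the ensemble within its $B$ queries --- forces $B\ge c_1(k,L,\epsilon,C_0)\,d^L$.

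\textbf{The main obstacle.} The crux is the uniform $2m$-th moment estimate of the previous paragraph: tracking, over all ways the $2m$ tensor factors can pair up, that the dominant contribution is $d$ raised to its ``expected'' exponent with a constant depending only on $k$ and $m$, and that the tensor-power restriction on $h$ kills the would-be dominant entangled pairings. The parity phenomenon driving the $\oddness$-dependence --- the identity flattening exists at Hermite degree $\ell$ only when $\ell s_i$ is even for every $i$, which for $\ell=1$ means $\oddness=0$ --- must be enforced at every moment order, and for the estimation case one must also pin down the ``direction-insensitive'' subspace precisely, verify that excising it costs exactly one power of $d$, and produce a packing large enough for an exponential statistical dimension while keeping this excision valid. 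The remaining ingredients --- the concentration of $\langle\bm v_i,\bm v_i'\rangle$ on the hypercube used to control the off-diagonal terms, the reduction of estimation to the packing search problem, and the passage from statistical dimension to query count --- are standard.
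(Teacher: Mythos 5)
Your overall plan is faithful to the structure of the paper's argument: both pass through the Feldman-style statistical dimension, both use the Hermite/Gaussian Fourier expansion of the query, and both observe that the degree-$\ell$ contribution is controlled by the extent to which the $2\ell s_i$ copies of $\bm v_i$ can be completely paired off — which is exactly the source of the $\oddness$-dependence. The reference-measure change you sketch for the $\oddness=0$ estimation lower bound is also the right idea.

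Where you diverge from the paper is at the step you flag as the crux. To pass from a second-moment estimate to the $2m$-th moment control needed for a $d^L$ query bound, you propose expanding $\langle h, L_{\bm V}-1\rangle^{2m}$ directly and accounting for all pairings of the $2m$ copies, using the rank-one structure of $h^{\otimes 2m}$ to rule out the entangled pairings that would otherwise dominate. The paper instead notices that $\E_{D_{\bm V}} q(\bm T) - \E_{D_0} q(\bm T)$ is a Boolean polynomial in $\bm v_1,\dots,\bm v_K\in\{\pm1\}^d$ (after using $v^2_{ij}=1$ to reduce partial sums to parities) and applies the $(2,u)$-Hypercontractive Inequality on the hypercube; this converts the $u$-th moment into the squared $L^2$ norm of a single \emph{noised} polynomial, whose Fourier coefficients are exactly the combinatorial quantities $p_\pi(l_1,\dots,l_K)$, and these are in turn estimated via the Poisson/Rademacher representation of Proposition~\ref{poisson_tensor_proposition} and Lemma~\ref{representation_formula_rademacher}. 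Hypercontractivity thus replaces your pairing analysis wholesale: the ``entanglement'' problem you worry about simply never arises, because the inequality already respects the pure tensor-power form of $h^{\otimes u}$. Your direct approach can probably be carried through, but the bookkeeping (which pairings survive, what they cost in powers of $d$, how the parity constraints interact with them at every order $m$) is precisely what the paper's two-step device --- hypercontractivity plus the Poisson-tensor reformulation of $p_\pi$ --- is designed to avoid. One smaller difference: for the $\oddness=0$ estimation bound you propose to excise a subspace of queries, which would need an argument that the adversary can safely ignore queries with a component in that subspace; the paper achieves the same effect more cleanly by changing the reference distribution from $D_0$ to $\overline{D}(\pi)=D_{\E \bm V}$ (which is explicitly permitted by Proposition~\ref{feldmans_sq_lb} for estimation), thereby killing the offending Hermite coefficients by construction rather than by restriction.
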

We also design optimal SQ procedures, which show that the above lower bounds are tight. 
\begin{theorem} \label{upper_bound_thm} There exist SQ algorithms that make $O(d^k \cdot \log(n))$ queries and solve:
\begin{enumerate}
    \item The Tensor PCA testing problem with $\oddness = 0$ provided $n \gg \sqrt{d^k}$.
    \item The Tensor PCA estimation problem with $\oddness = 0$ provided $n \gg \sqrt{d^{k+2}}$.
    \item The Tensor PCA testing and estimation problems with $\oddness \geq 1$ provided $n \gg \sqrt{d^{k + \oddness}}$.
\end{enumerate}
\end{theorem}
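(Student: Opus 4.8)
The plan is to give explicit SQ algorithms, built from two pieces. The first piece is a \emph{simulation lemma}: if $r:\tensor{\R^d}{k}\to\R$ is linear in $\bm T$, say $r(\bm T)=\ip{\bm T}{\bm c}$ with $\|\bm c\|_2=\sigma$ (more generally, any $r$ that is $\sigma$-subgaussian under $D_0$ and under every $D\in\mathcal D(\pi)$, with $\E_{D_0}[r]$ known), then from $O(\log(dn))$ queries to $\vstat{D}{n}$ one can produce $\hat r$ with $|\hat r-\E_D[r(\bm T)]|\le C\sigma\sqrt{\log(dn)/n}$. I would prove this either by clipping $r$ at level $\pm\Theta(\sigma\sqrt{\log(dn)})$, rescaling into $[0,1]$, and issuing one query — the rescaled query has $D$-mean $\tfrac12+O\!\big(\E_D[r]/(\sigma\sqrt{\log(dn)})\big)$, hence VSTAT error $O(1/\sqrt n)$, which rescales back to $O(\sigma\sqrt{\log(dn)/n})$ — or, more robustly, from the telescoping identity $\E_D[r]=\E_{D_0}[r]+\int(\P_D(r>t)-\P_{D_0}(r>t))\diff t$ on an $O(\log(dn))$-point grid of thresholds covering $\E_{D_0}[r]\pm\Theta(\sigma\log(dn))$ (outside which the two tail probabilities agree to within $\mathrm{poly}(dn)^{-1}$); the point is that the indicator queries $\Indicator{r(\bm T)>t}$ have $D$-variance that decays away from $\E_{D_0}[r]$, so the (adversarial, non-cancelling) VSTAT errors accumulate only to $O(\sigma\sqrt{\log(dn)/n})$, not to $\Theta(1/\sqrt n)$ per grid point.

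\textbf{The statistics.} The second piece is a family of \emph{self-contractions} of a single sample, tailored to $\pi$. Group the $k$ modes of $\bm T$ by label and let $O=\{i\in[K]:s_i\text{ odd}\}$, so $|O|=\oddness$. Define the order-$\oddness$ tensor statistic $\bm W(\bm T)$ by summing out, within each even-multiplicity label $i$, its $s_i$ modes arranged in $s_i/2$ matched pairs (each pair contracted by equating its two indices), and within each odd label $i^*$ leaving one free mode while pairing up the remaining $s_{i^*}-1$. Since $\bm W$ is a sum of single entries of $\bm T$ it is linear in $\bm T$. Using $\|\bm v_i\|=\sqrt d$ one checks $\E_{D_{\bm V}}[\bm W]=d^{-\oddness/2}\bigotimes_{i^*\in O}\bm v_{i^*}$, a rank-one tensor of unit Frobenius norm, whereas $\E_{D_0}[\bm W]=\bm 0$; and each entry of $\bm W$ is, under every $D\in\{D_0\}\cup\mathcal D(\pi)$, Gaussian with variance $\Theta(d^{(k-\oddness)/2})$ (a sum over $(k-\oddness)/2$ free dummy indices), hence $\sigma$-subgaussian with $\sigma=\Theta(d^{(k-\oddness)/4})$.

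\textbf{Assembling the algorithms.} For testing, apply the simulation lemma to each of the $d^{\oddness}$ entries of $\bm W$ and stack into $\hat{\bm W}$; then $\|\hat{\bm W}-\E_D[\bm W]\|_F\le d^{\oddness/2}\cdot C d^{(k-\oddness)/4}\sqrt{\log(dn)/n}=C\sqrt{d^{(k+\oddness)/2}\log(dn)/n}<\tfrac12$ once $n\gg\sqrt{d^{k+\oddness}}$ (when $\oddness=0$ this is a single scalar estimate $\hat{\bm W}\approx\E_D[\bm W]\in\{0,1\}$ and the condition reads $n\gg\sqrt{d^k}$), so outputting ``$D\in\mathcal D(\pi)$'' iff $\|\hat{\bm W}\|_F^2>\tfrac12$ is correct. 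For estimation with $\oddness\ge1$, take a best rank-one approximation of $\hat{\bm W}$ to recover $d^{-1/2}\hat{\bm v}_{i^*}$ for $i^*\in O$ (consistently up to the one global sign that also afflicts $\E_D\bm T$); then, for each even label $i$, run the same scheme on the matrix statistic $\bm W^{(i)}(\bm T)$ keeping two free modes of label $i$, pairing its remaining $s_i-2$ modes together with all modes of the other even labels, and contracting each odd label's modes against the recovered $d^{-1/2}\hat{\bm v}_{i^*}$; here $\E_{D_{\bm V}}[\bm W^{(i)}]=d^{-1}\bm v_i\bm v_i^{\top}$ (the global sign cancels since $s_{i^*}$ is odd) with per-entry scale $\le\Theta(d^{(k-\oddness-2)/4})$, so its leading eigenvector recovers $d^{-1/2}\bm v_i$ already when $n\gg\sqrt{d^{k-\oddness}}$. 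The case $\oddness=0$ is the degenerate $O=\emptyset$: one skips to the $\bm W^{(i)}$ step, whose entries have scale $\Theta(d^{(s_i-2)/4})$, and the error is controlled once $n\gg\sqrt{d^{k+2}}$, matching the claim. Finally output $\hat{\bm V}=d^{-k/2}\bigotimes_{l=1}^k\hat{\bm v}_{\pi(l)}$; a first-order perturbation bound for rank-one tensors converts $\ell_2$-accurate factors into $\|\hat{\bm V}-\E_D\bm T\|\le\tfrac14$. The total query count is $O(d^{\oddness}\log n)+O(d^{2}\log n)=O(d^k\log n)$.

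\textbf{Main obstacle.} The delicate step will be the error analysis of the simulation lemma: I must verify the threshold band truncates to width $O(\sigma\,\mathrm{polylog}(dn))$, which needs tail bounds for $r(\bm T)$ holding uniformly over all $D\in\mathcal D(\pi)$ — these should follow from the explicit linear form of $\bm W,\bm W^{(i)}$ and the fact that the signal only shifts each entry's mean by $O(1)$ — and that the adversarial VSTAT errors (which do not cancel across grid points, nor in the rescaling factor of the single-query version) still telescope to $O(\sigma\sqrt{\mathrm{polylog}(dn)/n})$ because the indicator variances decay. A secondary technical point will be propagating errors cleanly through the rank-one extractions and the final tensor reconstruction, and the sign bookkeeping across the odd labels.
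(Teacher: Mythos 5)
Your proposal takes essentially the same route as the paper's proof. Both reduce to partial-trace (``self-contraction'') statistics that pair up the modes within each even-multiplicity label and leave one free mode per odd label, and both estimate these statistics entry-wise via a scalar SQ mean-estimation subroutine, then contract the estimated odd-label tensor against the sample to recover the remaining rank-one matrix factors. Two implementation differences: the paper simply cites Feldman's scalar mean-estimation result (Fact~\ref{scalar_mean_estimation_fact}) rather than proving a simulation lemma from scratch — your single-query clipping/rescaling argument is a valid, slightly simpler substitute, though note it only gives the weaker ``$p\approx\tfrac12$'' VSTAT error $\Theta(1/\sqrt n)$, which is still sufficient after rescaling; and the paper never extracts the rank-one factors of $\hat{\bm W}$ (resp. $\hat{\bm O}$) or of the estimated matrices — it simply outputs the normalized outer product $\tfrac{\hat{\bm E}_1}{\|\hat{\bm E}_1\|}\otimes\cdots\otimes\tfrac{\hat{\bm O}}{\|\hat{\bm O}\|}$, so the error bound is $\|\hat{\bm V}-\E_D\bm T\|^2=2-2\prod_i\tfrac{\ip{\hat{\bm E}_i}{\bm E_i}}{\|\hat{\bm E}_i\|}\cdot\tfrac{\ip{\hat{\bm O}}{\bm O}}{\|\hat{\bm O}\|}$ with no tensor rank-one decomposition needed and no sign bookkeeping. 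Your rank-one extraction does work (the $\prod\epsilon_{i^*}=1$ ambiguity washes out since all $s_{i^*}$ are odd), but it is an avoidable detour. One small slip: in your $\oddness=0$ discussion the per-entry scale of $\bm W^{(i)}$ should be $\Theta\bigl(d^{(k-2)/4}\bigr)$, not $\Theta\bigl(d^{(s_i-2)/4}\bigr)$, since one sums over all $(k-2)/2$ remaining paired modes (across all even labels), not just the $s_i-2$ of label $i$; the final condition $n\gg\sqrt{d^{k+2}}$ is unaffected.
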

Our results have the following implications:
\begin{enumerate}
    \item As mentioned previously, when $k\geq 3$, no known computationally efficient algorithm is able to leverage the low rank structure in Tensor PCA optimally.  Our results show that SQ algorithms also do not work in the conjectured hard phase $d \lesssim n \ll \sqrt{d^k}$.
\end{enumerate}
The SQ framework leverages the adversarial corruptions in the query responses to prove strong lower bounds. However, our result shows that these adversarial corruptions have the unintended effect of making the SQ lower bounds too pessimistic:
\begin{enumerate}
      \setcounter{enumi}{1}
    \item When $k=2$, the estimator based on the best rank-1 approximation to $\overline{\bm T}$ is able to effectively leverage the low dimensional structure and attain the information theoretic sample complexity in polynomial time. In contrast, SQ estimators with polynomial query complexity are ineffective at taking advantage of this low dimensional structure and require as many samples as in the case when the mean is unstructured ($n \gtrsim d^2$).
    \item When $k \geq 3$, SQ estimators with polynomially many queries are still less effective than known polynomial time algorithms in taking advantage of the low rank structure, however now, the effectiveness of SQ procedures depends crucially on the underlying symmetry of the mean tensor. They are least effective when the mean tensor is completely asymmetric, where they require as many samples as in the unstructured case ($n \gtrsim d^k$).
    \item  When $\oddness = 0$ (e.g., even-order symmetric Tensor PCA), the problem exhibits an estimation-testing gap in the SQ model: in the regime $\sqrt{d^k}  \lesssim n \ll \sqrt{d^{k+2}}$ testing is possible but estimation is not. No such testing-estimation gap is seen outside the SQ model.
 \end{enumerate}
 The ineffectiveness of SQ algorithms seems connected to the fact that leveraging the low rank structure requires solving stochastic non-convex optimization with adversarial, inexact gradient oracles. The results of \citet{feldman2017stochastic} show that stochastic \emph{convex} optimization problems based on $n$ samples (such as the least squares problem) can be solved with restricted access to a $\vstat{}{n}$ oracle with no degradation in performance. In contrast, our estimation lower bound for the  symmetric spiked Wigner problem ($k=2,\oddness = 0$, $\E \bm T = \bm v_0 \bm v_0^\UT$) exhibits a canonical, benign stochastic non-convex optimization problem: $\max_{\|\bm v\| = 1}\E \ip{\bm v}{\bm T \bm v}$ which cannot be solved with a $\vstat{}{n}$ oracle without a degradation in sample complexity compared to its empirical version: $\max_{\|\bm v\| = 1}\ip{\bm v}{\overline{\bm T} \bm v}$. This rules out  non-convex extensions to the results of \citet{feldman2017stochastic} without a worse dependence on problem parameters. This suboptimality of SQ algorithms  seems to arise because random initialization starts close to a saddle point of the population objective. In the spiked Wigner case, the $n$ sample empirical gradient at initialization $\bm v_\mathsf{init}$ is $2\ip{\bm v_0}{\bm v_\mathsf{init}} \bm v_0 + 2\bm g$ where $\bm g \sim \gauss{\bm 0}{\bm I_d/n}$. Although the ``signal'' part is small, $\ip{\bm v_0}{\bm v_{\mathsf{init}}} \approx d^{-\frac{1}{2}}$, the noise $\bm g$ is randomly oriented, $\ip{\bm g}{\bm v_0} \approx n^{-1/2}$, and hence when $n \gg d$, the signal part dominates the noise part in the projection of the gradient along $\bm v_0$ at initialization. This signal is amplified in the subsequent iterations of the power method. In contrast, when a $\vstat{}{n}$ oracle is used, the approximate gradient is of the form $2\ip{\bm v_0}{\bm v_\mathsf{init}} \bm v_0 + 2\tilde{\bm g}$, where $\|\tilde{\bm g}\|^2 \approx \|\bm g\|^2 \approx d/n$; but the noise vector $\tilde{\bm g}$ is no longer randomly oriented and can be adversarially chosen to cancel out the signal unless $n \gg d^2$. 

Given that SQ algorithms are suboptimal for Tensor PCA, one might wonder why one should care about SQ lower bounds for Tensor PCA.
  Indeed, the SQ prediction of the threshold for hardness of Tensor PCA is nowhere near the correct (conjectured) threshold, so SQ lower bounds are not evidence for the computational hardness of the problem.
We believe that SQ lower bounds for Tensor PCA are still useful for two reasons. (1) Currently, there are several frameworks to predict computational-statistical gaps including SQ lower bounds, Hopkins' Low Degree method {\citep{hopkins2018statistical,kunisky2019notes}}, reductions from other hard problems {\citep{berthet2013complexity}}, and statistical physics heuristics like failure of Belief Propagation {\citep{zdeborova2016statistical}}. We believe it is of interest to report successes and failures of these frameworks in canonical problems to understand their relative strengths and weaknesses. (2)  SQ lower bounds are also useful even beyond the study of computational-statistical gaps. SQ algorithms are robust since they can't use specialized properties of finite sample fluctuations implied by model assumptions. This is desirable in practice, and it is of interest to understand how the sample complexity changes under some natural deviations from model assumptions.
\subsection{Related Work}
A number of works have investigated the computational-statistical gap in Tensor PCA, which we discuss below.

\paragraph{Studies of Computational-Statistical Gap in Tensor PCA:} \citet{arous2019landscape} have shown that in the information theoretic regime $n \asymp d$, the expected number of local maximizers of the maximum likelihood optimization problem (Eq.~\ref{MLE}) is exponentially large. These local maximizers can potentially trap gradient based methods. \citet{arous2018algorithmic} have shown that when $n \gg d^{k-1}$, Langevin Dynamics succeeds in solving Tensor PCA and that this is essentially tight for Langevin Dynamics to work. Sum of Squares relaxations for Tensor PCA have been analyzed \citep{hopkins2015tensor,hopkins2016fast,hopkins2017power,bhattiprolu2016sum} and it is known that computationally efficient sum-of-squares based procedures match the performance of the Richard-Montanari spectral estimator but fail to improve on it. {A popular way to provide evidence for hardness of a particular problem is via reductions, i.e., to show that solving it would lead to algorithms to solve another problem which is widely believed to be hard. Reductions were introduced to reason about the hardness of statistical inference problems in the seminal work of \citet{berthet2013complexity}. In the specific case of Tensor PCA, it has been shown that hardness of Hypergraph Planted Clique implies the hardness of Tensor PCA \citep{zhang2018tensor,brennan2020reducibility}.}

Our results reveal a gap between the optimal sample complexity of Tensor PCA for polynomial query SQ algorithms and computationally efficient algorithms. A well known example of this situation is efficient learning of parity functions (in a noise-free setting), which is possible using the Gaussian elimination algorithm, but not using SQ algorithms \citep{blum1994weakly}. Apart from this, we are aware of at least two other instances where SQ algorithms are suboptimal in comparison to some computationally efficient and noise tolerant procedure, which we discuss next.

\paragraph{Planted Satisfiability:} \citet{feldman2018complexity} study a broad class of planted random planted constraint satisfaction problems in which an unknown assignment $\bm \sigma_\star \in \{0,1\}^d$ is fixed and we observe $n$ clauses $C_1, C_2, \dotsc, C_n$ drawn independently and uniformly at random from a family of clauses $\mathcal{C}$, conditioned on $\bm \sigma_\star$ being a satisfying assignment for the clause. The goal is to identify the planted assignment $\bm \sigma_\star$.  When $\mathcal{C}$ is the set of all XOR expressions involving $k$ variables, the problem is called the planted $k$-XOR-SAT problem and exhibits a large computational-statistical gap: observing $n \gtrsim d \log(d)$ clauses is information theoretically sufficient to identify the planted assignment $\bm \sigma_\star$ with high probability. However, all noise-tolerant and  computationally efficient methods require $ n \gtrsim d^{\frac{k}{2}}$ clauses. On the other hand, \citet{feldman2018complexity} show that SQ algorithms fail unless $n \gtrsim d^{k}$. Moreover, this work introduces the Fourier analytic approach to prove SQ lower bounds that we rely on. Previous approaches for proving SQ lower bounds \citep{feldman2017statistical} are only able to show a weaker sample complexity lower bounds of $n \gtrsim d^{\frac{k}{2}}$ for $k$-XOR-SAT as well as $k$-Tensor PCA. {This work also proposes a hierarchical generalization  of the $\vstat{}{n}$ oracle for satisfiability problems known as the $\mathsf{MVSTAT}(n,\ell)$ oracle where $\ell$ is a parameter controlling the strength of the oracle. The oracle is designed such that, for a well-chosen value of $\ell$, it is possible to implement a particular $d^{\frac{k}{2}}$ sample complexity estimator for $k$-XOR-SAT without any degradation in performance}. 

\paragraph{Matrix Mean Estimation:} The recent work \citet{li2019mean} studies mean estimation on general normed spaces. Their results on matrix mean estimation with respect to the operator norm ($\| \cdot \|_\mathsf{op}$) are particularly relevant for our problem. In this problem, one is given $n$ samples $\bm X_1, \bm X_2, \dotsc, \bm X_n$ of $d \times d$ matrices sampled i.i.d.~from some distribution supported on unit operator norm ball, and the goal is to estimate $\E \bm X$ with an error of $\epsilon = 0.1$ in operator norm. By the matrix Bernstein inequality~\citep{tropp2015introduction}, when $n \gtrsim \log(d)$, the empirical average of the matrices $\overline{\bm X}$ satisfies $\|\overline{\bm X} - \E \bm X\|_\mathsf{op} \leq 0.1$. On the other hand, \citeauthor{li2019mean} show that in the SQ model one requires $n \gtrsim d$ samples to solve this problem. Though their results are stated for unstructured matrix mean estimation, the hard distribution they construct is supported on highly sparse Boolean matrices with a low rank expectation. Thus, after a suitable change of scale, their hard instance also shows $n \gtrsim d^2$ sample complexity lower bound in the SQ model for a non-Gaussian variant of low rank matrix mean estimation. Their proof also relies on the Fourier analytic approach of \citet{feldman2018complexity}.

Another commonly used framework to provide evidence for a computational-statistical gap is the low-degree polynomial framework, which we discuss next.

\paragraph{Connection with low-degree polynomial lower bounds:} \change{The low-degree polynomial framework seeks to provide evidence for a computational-statistical gap by showing that any procedure that seeks to solve the inference problem by computing a low-degree polynomial of the data fails to solve the inference in the conjectured hard phase \citep{hopkins2018statistical,kunisky2019notes}. This framework consistently reproduces widely believed average-case hardness conjectures for many problems, including Tensor PCA \citep{hopkins2017power,hopkins2018statistical,kunisky2019notes}. The concurrent work of \citet{brennan2020statistical} identifies general conditions under which lower bounds for low-degree polynomial procedures are equivalent to a \emph{sufficient criterion} for SQ lower bounds, namely the statistical dimension criterion of \citet{feldman2017statistical}. Hence, for many inference problems, including the completely symmetric Tensor PCA testing problem, they obtain SQ lower bounds from previously known low-degree lower bounds. However, the SQ lower bounds obtained in this manner are often not tight. For example, \citeauthor{brennan2020statistical} show that when $n \ll d^{\frac{3}{2}}$, SQ algorithms with polynomial query complexity fail to solve the  completely symmetric Tensor PCA testing problem with $k=3$. In contrast, our results show that the same conclusion holds when $n \ll d^{2}$.  The reason for this appears to be that the statistical dimension criterion of \citet{feldman2017statistical} studied by \citeauthor{brennan2020statistical} is only a sufficient (and not necessary) criterion for SQ lower bounds. In contrast, our lower bounds rely on a different criterion introduced by \citet{feldman2018complexity} which is known to imply stronger SQ lower bounds in other problems like planted CSPs \citep{feldman2018complexity}. }

\subsection{Notation}
In this section, we introduce the notation used in this paper.

\paragraph{Common Sets:} $\R, \N, \W$ denote the set of real numbers, natural numbers and non-negative integers respectively. We denote the finite set $\{1,2 \dots, k\}$ by $[k]$.  The set $\tensor{\R^d}{k}$ denotes the set of all $d \times d \times \dotsb \times d$ tensors with real entries. The set $\tensor{\W^d}{k}$ denotes the set of all $ d \times d \times \dotsb \times d$ tensors with non-negative integer entries. 

\paragraph{Vectors, Matrices and Tensors:} We denote vectors, matrices and tensors with bold face letters. We denote the $d$ dimension vectors $(1,1, \dotsc,1)$ and $(0,0,\dotsc, 0)$ as $\bm 1_d$ and $\bm 0_d$. We will omit the subscript $d$ when it is clear from the context.  For vectors, $\|\cdot \|$ and $\|\cdot \|_1$ denote the Euclidean ($\ell_2$) norm and the $\ell_1$ norm respectively. For vectors $\bm a , \bm b \in \R^d$, $\ip{\bm a}{\bm b}$ denotes the standard Euclidean inner product $\sum_{i=1}^d a_i b_i$. These norms and inner product are analogously defined for matrices and tensors by stacking their entries into a single vector. For a tensor $\bm T \in \tensor{\W^d}{k}$, we define the map $\Tsum{\bm T}{l}: \tensor{\W^d}{k} \times [k] \mapsto \R^d$ which computes the partial sum of the entries of $\bm T$ along all modes except mode $l$:
	\begin{align}
	\label{tsum_notation}
	\Tsum{\bm T}{l }_i & = \sum_{j_1,j_2 \dots j_{l-1}, j_{l+1} \dots j_k \in [d]} T_{j_1,j_2 \dots j_{l-1},i, j_{l+1} \dots j_k}.
	\end{align}
	Likewise we define $\Tpar{\cdot}{\cdot }: \tensor{\W^d}{k} \times [k] \mapsto \{0,1\}^d$ as follows:
	\begin{align}
	\label{tparity_notation}
	\Tpar{\bm T}{l}_i & = \bigoplus_{j_1,j_2 \dots j_{l-1}, j_{l+1} \dots j_k \in [d]} T_{j_1,j_2 \dots j_{l-1},i, j_{l+1} \dots j_k}.
	\end{align}
In the above display, $\oplus$ denotes addition modulo $2$.

Given a labelling function $\pi : [k] \mapsto [K]$, which assigns each mode $l \in [k]$ to a label $\pi(l) \in K$, we can combine the partial sums which leave out modes with the same label by adding them. This defines a map $\Tsum{\bm T}{i}[\pi]: \tensor{\W^d}{k}\times [K] \mapsto \R^d$:
\begin{align*}
    \Tsum{\bm T}{i}[\pi] & = \sum_{l: \pi(l) = i} \Tsum{\bm T}{l}.
\end{align*}
Analogously, we combine the partial parities which leave out modes with the same label via the map $\Tpar{\bm T}{i}[\pi]: \tensor{\W^d}{k} \times [K] \mapsto \{0,1\}^d$ defined as follows:
\begin{align*}
        \Tpar{\bm T}{i}[\pi] & = \bigoplus_{l: \pi(l) = i} \Tpar{\bm T}{l}.
\end{align*}
For a tensor $\bm T$, the support of $\bm T$ is $\supp{\bm T} = \{(i_1,i_2, \dotsc, i_k) \in \N^k: T_{i_1,i_2, \dotsc, i_k} \neq 0\}$.

\paragraph{Special Distributions:} $\gauss{\bm \mu}{\bm \Sigma}$ denotes the multivariate Gaussian distribution with mean $\bm \mu$ and covariance matrix $\bm \Sigma$. $\pois{\lambda}$ denotes the Poisson distribution with rate $\lambda \geq 0$. $\mult{n}{k}$ denotes the Multinomial distribution governing the random vector that counts the occurrences of $1, 2, \dotsc, k$ when a fair $k$-faced die is independently thrown $n$ times. For a finite set $S$, $\unif{S}$ denotes the uniform distribution on $S$, and $\unif{\{\pm 1\}}$ is also called the Rademacher distribution. 

\paragraph{Asymptotic Order Notation:} We use the following notation to describe the behaviour of positive sequence $f(d)$ and $g(d)$ for large $d$. We say $f(d) \lesssim g(d)$ if there exists  a constant $c$ independent of $d$ such that $f(d) \leq c g(d)$. If $f(d) \lesssim g(d)$ and $g(d) \lesssim f(d)$, then we say $f(d) \asymp g(d)$. Finally, we say a sequence $f(d) \ll g(d)$ if there exists an $\epsilon > 0$ such that $f(d) d^{\epsilon} \lesssim g(d)$.

\paragraph{Miscellaneous:} For real numbers $a,b$, $a \vee b$ denotes the maximum of $a,b$. For a vector $\bm v \in \R^d$ and a vector $\bm c \in \W^d$, we define the entry-wise factorial and the entry-wise powering notations:
	\begin{align}
	\label{entry_wise_notation}
	\bm c ! \explain{def}{=} \prod_{i=1}^d c_{i}!, \; \bm a^{\bm c} \explain{def}{=} \prod_{i=1}^d  a_i^{c_i}.
	\end{align}
	We also use the natural extension of the above entry-wise operators to the case when $\bm a, \bm c$ are tensors. 
	
\section{Proof Overview for Theorem \ref{main_result}}
For Tensor PCA with the labelling function $\pi$, we consider the following prior on $\bm v_1, \bm v_2, \cdots, \bm v_K$: $\bm v_i \explain{i.i.d.}{\sim} \unif{\{\pm 1 \}^d}$. Define the random tensor $\bm V = \bm v_{\pi(1)} \otimes \bm v_{\pi(2)} \otimes \cdots \otimes \bm v_{\pi(k)}$. Note that for any realization of $\bm V$, $D_{\bm V} \in \mathcal{D}(\pi)$. Standard results on the SQ framework (developed in \citet{feldman2018complexity}, reviewed in Appendix~\ref{framework_appendix}) show that the minimal possible query complexity of an SQ algorithm that solves the Tensor PCA testing problem with $n$ samples is determined by the asymptotics of the following quantity:
\begin{align}
    \max_{q: \E_{D_0} q^2(\bm T) = 1} \Delta(q; D_0), \text{ where } \; \Delta(q;D_0) \explain{def}{=} \P_{\bm V} \left[ \left| \E_{D_{\bm V}} q(\bm T) - \E_{D_0} q(\bm T) \right| \gtrsim \frac{1}{\sqrt{n}}\right]. \label{main_object_of_study}
\end{align}
The intuition for this is as follows. Consider a fixed realization of the r.v.~$\bm V$. Suppose for a query $q$, we have $|\E_{D_{\bm V}} q(\bm T) - \E_{D_0} q(\bm T) |  \gtrsim n^{-\frac{1}{2}}$. Then given a response from an arbitrary $\vstat{D}{n}$ oracle, one can distinguish if $D = D_0$ or if $D=D_{\bm V}$ since the gap $|\E_{D_{\bm V}} q(\bm T) - \E_{D_0} q(\bm T) |$ is more than the size of the error introduced by the $\vstat{D}{n}$ oracle. In this situation, we say that the query $q$ has \emph{resolved} the alternative hypothesis $D_{\bm V}$. In order to solve the composite hypothesis testing problem of distinguishing $D = D_0$ or $D \in \mathcal{D}(\pi)$, one must ask queries to resolve at least every hypothesis  $D_{\bm V}$ for every possible realization of $\bm V$. Note that for a query $q$, the quantity $\Delta(q; D_0)$ measures the information content of a single query: the fraction of realizations of $\bm V$ that are resolved by the query $q$. In order to minimize the number of queries asked, one should ask powerful queries: single queries which resolve a large fraction of hypotheses in one go. Hence, in order to show that no SQ algorithm can solve Tensor PCA  with polynomially many queries it is sufficient to show that any single query resolves at most $o(d^{-t})$ fraction of possible realizations of $\bm V$ $\forall \; t \in \N$.

\change{For the estimation problem, the results of \citet[Theorem 4.2]{feldman2017general} show that the number of queries that need to be made to solve the estimation problem with $n$ samples is governed by the asymptotics of a quantity analogous to object in Eq.~\ref{main_object_of_study} with a small change:} the measure $D_0$ can be replaced by an arbitrary measure on $D_\star$ on $\tensor{\R^d}{k}$. For most cases, choosing $D_\star = D_0$ suffices. In other cases we will choose  
\begin{align}
D_\star = \overline{D} (\pi) \explain{def}{=} D_{\overline{\bm V}(\pi)}, \;\text{where } \overline{ \bm V}(\pi) \explain{def}{=} \E \bm V.
\label{vbar_def}
\end{align}

To analyze Eq.~\ref{main_object_of_study}, we use a key idea of \citet{feldman2018complexity}: $\E_{D_{\bm v_{\pi(1)} \otimes \dotsb \otimes \bm v_{\pi(k)}}} q(\bm T) - \E_{D_0} q(\bm T)$ is a function of Boolean random vectors $\bm v_1, \bm v_2, \dotsc, \bm v_K$, and Hypercontractivity Theorems can be used to control its moments and concentration properties. Implementing this approach requires an understanding of the Fourier spectrum of $\E_{D_{\bm v_{\pi(1)} \otimes \dotsb \otimes \bm v_{\pi(k)}}} q(\bm T) - \E_{D_0} q(\bm T)$, which is possible due to the additive Gaussian structure of Tensor PCA. This structure is also leveraged in the Low Degree method for predicting computational-statistical gaps \citep{hopkins2018statistical,kunisky2019notes}. This results in the following proposition, which is proved in Appendix \ref{fourier_analytic_prop_proof}.
\begin{proposition} \label{fourier_analytic_prop}
	Let $\epsilon > 0, \; u \in \N, u \geq 2$ be arbitrary. For the reference distribution $D_0$, and for any query $q$ with $\E_{D_0} q^2(\bm T) = 1$, we have,
	\begin{align*}
	 \P_{\bm V} \left[ \left| \E_{D} q(\bm T) - \E_{D_0} q(\bm T) \right| > \epsilon \right] & \leq \left( \frac{e}{\epsilon^2} \cdot \max_{l_1,l_2,\dotsc,l_K \in \W} \left(  (u-1)^{l_1 + l_2 + \dotsb +  l_K} \cdot p_\pi(l_{1:K}) \right) \right)^{\frac{u}{2}},
	\end{align*}
	where the coefficients $p_\pi(l_1,l_2, \dotsc, l_K)$ are defined as follows:
	\begin{align*}
        p_\pi(l_1,l_2,\dotsc,l_K) & \explain{def}{ =} \frac{1}{e} \sum_{\substack{\bm c \in \tensor{\W^d}{k}:  \|\bm c\|_1 \geq 1 \\ \Tpar{\bm c}{i}[\pi] =(\bm{1}_{l_i}, \bm 0)\; \forall \;  i \;  \in \;  [K]}} \frac{1}{d^{k \|\bm c\|_1}\cdot \bm c!}.
    \end{align*}
    For the reference measure $\overline{D}(\pi)$, for any query $q$ such that $\E_{\overline{D}(\pi)} q^2(\bm T) = 1$, we have,
    \begin{align*}
        \P_{\bm V} \left[ \left| \E_{D} q(\bm T) - \E_{\overline{D}(\pi)} q(\bm T) \right| > \epsilon \right] & \leq \left( \frac{e}{\epsilon^2} \cdot \max_{l_1,l_2,\dotsc,l_K \in \W} \left(  (u-1)^{l_1 + l_2 \dots + l_K} \cdot \overline{p}_\pi(l_{1:K}) \right) \right)^{\frac{u}{2}}, 
    \end{align*}
    where the coefficients $\overline{p}_\pi(l_1,l_2, \dotsc, l_K)$ are defined as follows:
	\begin{align*}
        \overline{p}_\pi(l_1,l_2,\dotsc, l_K) & \explain{def}{ =} \frac{1}{e} \sum_{\substack{\bm c \in \tensor{\W^d}{k}:  \|\bm c\|_1 \geq 1 \\ \Tpar{\bm c}{i}[\pi] =(\bm{1}_{l_i}, \bm 0)\; \forall \;  i \;  \in \;  [K]}} \frac{1}{d^{k \|\bm c\|_1}\cdot \bm c!} \cdot \Indicator{\supp{\bm c} \cap \supp{\overline{\bm V}(\pi)} = \phi}.
    \end{align*}
\end{proposition}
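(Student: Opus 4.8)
The plan is to derive an explicit Fourier expansion, over the product cube $(\{\pm 1\}^d)^K$ carrying $(\bm v_1,\dots,\bm v_K)$, of the function $F(\bm v_{1:K}):=\E_{D}q(\bm T)-\E_{D_0}q(\bm T)$, and then read off its concentration from Boolean hypercontractivity. First I would write $\E_{D}q(\bm T)=\E_{D_0}[q(\bm T)L_{\bm V}(\bm T)]$, where $L_{\bm V}=dD_{\bm V}/dD_0$ is the Gaussian mean-shift likelihood ratio $L_{\bm V}(\bm T)=\prod_{\bm j}\exp(\mu_{\bm j}T_{\bm j}-\mu_{\bm j}^2/2)$ with $\mu_{\bm j}=d^{-k/2}\prod_l(\bm v_{\pi(l)})_{j_l}$, and expand each factor via the Hermite generating identity $e^{\mu t-\mu^2/2}=\sum_{c\ge 0}(\mu^c/\sqrt{c!})\,h_c(t)$, with $h_c$ the normalized Hermite polynomials. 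Letting $\{H_{\bm c}\}$ be the induced orthonormal Hermite basis of $L_2(D_0)$ and $\hat q(\bm c):=\E_{D_0}[q\,H_{\bm c}]$, this gives $L_{\bm V}=\sum_{\bm c}(\bm\mu^{\bm c}/\sqrt{\bm c!})H_{\bm c}$, hence $F=\sum_{\|\bm c\|_1\ge 1}\hat q(\bm c)\,\bm\mu^{\bm c}/\sqrt{\bm c!}$. The key bookkeeping step is that in $\bm\mu^{\bm c}=d^{-k\|\bm c\|_1/2}\prod_{\bm j}\prod_l(\bm v_{\pi(l)})_{j_l}^{c_{\bm j}}$ one collects the total exponent of each coordinate $(\bm v_i)_a$, and, since $(\bm v_i)_a\in\{\pm 1\}$, that exponent collapses modulo $2$ to exactly $\Tpar{\bm c}{i}[\pi]_a$; therefore $\bm\mu^{\bm c}=d^{-k\|\bm c\|_1/2}\prod_{i=1}^K\chi_{S_i^{\bm c}}(\bm v_i)$ with $S_i^{\bm c}:=\supp{\Tpar{\bm c}{i}[\pi]}$ and $\chi_S$ the Walsh character of $S\subseteq[d]$, so that $F(\bm v_{1:K})=\sum_{\|\bm c\|_1\ge 1}\hat q(\bm c)\,b_{\bm c}\prod_{i=1}^K\chi_{S_i^{\bm c}}(\bm v_i)$ with $b_{\bm c}:=d^{-k\|\bm c\|_1/2}/\sqrt{\bm c!}$.

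Next I would group this expansion by the multidegree $l=(l_1,\dots,l_K)$, $l_i=|S_i^{\bm c}|$, into mutually orthogonal multi-homogeneous parts $F=\sum_l F_l$ (write $\|l\|_1:=l_1+\dots+l_K$). Cauchy--Schwarz on the inner sum over those $\bm c$ sharing a common character gives $\|F_l\|_2^2\le w_l\cdot\max_{T_1,\dots,T_K:\,|T_i|=l_i}\sum_{\bm c:\ \|\bm c\|_1\ge 1,\ S_i^{\bm c}=T_i\ \forall i}b_{\bm c}^2$, where $w_l:=\sum_{\bm c:\ \|\bm c\|_1\ge 1,\ |S_i^{\bm c}|=l_i\ \forall i}\hat q(\bm c)^2$ satisfies $\sum_l w_l\le\E_{D_0}q^2(\bm T)=1$ by Parseval. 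I would then show the inner weight sum is actually independent of the particular subsets $T_i$: relabelling, within each label class $\pi^{-1}(i)$, all tensor modes carrying label $i$ by one common permutation $\sigma_i$ of $[d]$ sends a witnessing $\bm c$ to another with $S_i^{\bm c}\mapsto\sigma_i(S_i^{\bm c})$ and unchanged $\|\bm c\|_1$ and $\bm c!$; since distinct labels own disjoint sets of modes, the $\sigma_i$ can be chosen independently, and every size-$l_i$ tuple is reachable from the canonical tuple $T_i=\{1,\dots,l_i\}$, where the weight sum equals $e\,p_\pi(l_{1:K})$ (the underlying series converging by the multinomial identity $\sum_{\|\bm c\|_1=m}1/\bm c!=d^{km}/m!$). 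Hence $\|F_l\|_2^2\le e\,p_\pi(l_{1:K})\,w_l$.

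To conclude, I would apply hypercontractivity on the product cube. Setting $\rho=(u-1)^{-1/2}$ and $G:=\sum_l(u-1)^{\|l\|_1/2}F_l$, the function $G$ lies in $L_2$ because $\|G\|_2^2=\sum_l(u-1)^{\|l\|_1}\|F_l\|_2^2\le e\,\big(\max_l(u-1)^{\|l\|_1}p_\pi(l_{1:K})\big)\sum_l w_l\le e\,\max_l(u-1)^{\|l\|_1}p_\pi(l_{1:K})$, and the noise (Bonami--Beckner) operator $T_\rho$, which multiplies the multidegree-$l$ part by $\rho^{\|l\|_1}$, sends $G$ to $F$; so the hypercontractive inequality $\|T_\rho G\|_u\le\|G\|_2$ (valid for $\rho\le(u-1)^{-1/2}$, $u\ge 2$) gives $\|F\|_u\le\|G\|_2$, and Markov's inequality $\P_{\bm V}[|F|>\epsilon]\le\epsilon^{-u}\|F\|_u^u$ then yields the first bound. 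For the reference measure $\overline{D}(\pi)$ I would run the same argument but expand $dD_{\bm V}/d\overline{D}(\pi)$ in the Hermite basis centered at $\overline{\bm\mu}=\overline{\bm V}(\pi)/d^{k/2}$: completing the square exhibits this ratio as the mean-shift by $\delta_{\bm j}:=\mu_{\bm j}-\bar\mu_{\bm j}$ in the centered variables, and the crucial point is that $\delta_{\bm j}=0$ whenever $\bm j\in\supp{\overline{\bm V}(\pi)}$ --- there $\prod_l(\bm v_{\pi(l)})_{j_l}$ equals $+1$ almost surely, which is exactly what it means for the $\{\pm 1\}$-valued $\overline V_{\bm j}$ to equal its mean $1$ --- so $\bm\delta^{\bm c}$ vanishes unless $\supp{\bm c}\cap\supp{\overline{\bm V}(\pi)}=\phi$, which inserts the stated indicator and replaces $p_\pi$ by $\overline{p}_\pi$, after which the hypercontractivity-plus-Markov step is identical. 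I expect the main obstacle to be the two pieces of Fourier bookkeeping --- the modulo-$2$ collapse turning the Hermite expansion of the likelihood ratio into the $\Tpar{\bm c}{i}[\pi]$-indexed Walsh characters, and the symmetrization pinning the structural weight sum to its canonical value $e\,p_\pi(l_{1:K})$ --- together with justifying the term rearrangements and convergence needed to split $F$ into its multi-homogeneous parts and to apply hypercontractivity to the inflated series $G$.
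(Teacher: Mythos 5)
Your proposal is correct and follows essentially the same route as the paper's proof: expand the bias in the Hermite basis of $L_2(D_0)$, use the parity-collapse of $\bm\mu^{\bm c}$ to rewrite it as a polynomial in Walsh characters of $(\bm v_1,\dots,\bm v_K)$, apply Cauchy--Schwarz plus Parseval to extract the structural weight (what you call $\max_T\sum_{\bm c}b_{\bm c}^2$ and the paper controls via an $\ell_1/\ell_\infty$ H\"older step; both reduce to $e\,p_\pi(l_{1:K})$ by the mode-wise relabelling, which you spell out more carefully than the paper's terse ``by a relabelling of $[d]$''), then invoke $(2,u)$-hypercontractivity and Markov; the $\overline D(\pi)$ case is handled identically in both, by completing the square and noting $\delta_{\bm j}=0$ on $\supp{\overline{\bm V}(\pi)}$. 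The only cosmetic deviation is that you group by multidegree $l$ before applying Cauchy--Schwarz and sum $\|F_l\|_2^2$, while the paper groups by $\bm r$ and takes a max; since everything is a finite sum on the Boolean cube, the ``rearrangement and convergence'' worries you flag are vacuous.
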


\subsection{Asymptotics of Combinatorial Coefficients $p_\pi$ and $\overline{p}_\pi$}
Proving sharp SQ lower bounds for Tensor PCA requires a tight asymptotic analysis of the combinatorial coefficients $p_\pi$ and $\overline{p}_\pi$. In order to illustrate why this requires a delicate analysis, we consider the following natural bound:
\begin{align}
    p_\pi(l_1,l_2, \dots ,l_K) =  \frac{1}{e} \sum_{\substack{\bm c \in \tensor{\W^d}{k}:  \|\bm c\|_1 \geq 1 \\ \Tpar{\bm c}{i}[\pi] =(\bm{1}_{l_i}, \bm 0)\; \forall \;  i \;  \in \;  [K]}} \frac{1}{d^{k \|\bm c\|_1}\cdot \bm c!} 
    \leq \frac{1}{ e d^k} \sum_{\substack{\bm c \in \tensor{\W^d}{k}}} \frac{1}{\bm c!}  
    =  \frac{1}{e d^k} e^{d^k} \label{silly_upperbound},
\end{align}
where the last step follows from $e = 1 + 1/1! + 1/2! + \dotsb$. This estimate diverges exponentially with $d^k$ and is too weak for our analysis. In order to obtain tight estimates, we rely on a natural a probabilistic interpretation of the coefficients $p_\pi$ as a certain Poisson probability. Let $\bm C$ be a random tensor in $\tensor{\W^d}{k}$ whose entries are sampled independently as $C_{i_1,i_2, \dotsc, i_k}  \explain{i.i.d.}{\sim} \pois{d^{-k}}$.
Recalling the formula for the probability mass function of the Poisson distribution (Fact \ref{poisson_distribution_definition}), we have
\begin{align*}
    p_\pi(l_1,l_2,\dotsc, l_K) & \explain{def}{ =} \sum_{\substack{\bm c:  \|\bm c\|_1 \geq 1 \\ \Tpar{\bm c}{i}[\pi] =(\bm{1}_{l_i}, \bm 0)\; \forall \;  i \;  \in \;  [K]}} \prod_{i_1,i_2,\dotsc, i_k \in [d]} \frac{e^{-\frac{1}{d^k}}}{(d^{k})^{c_{i_1,\dotsc, i_k}} \cdot c_{i_1,\dotsc, i_k}!} \\
    & = \P \left( \|\bm C\|_1 \geq 1, \Tpar{\bm C}{i}[\pi] = (\bm{1}_{l_i}, \bm 0) \; \forall \; i \; \in [K]  \right).
\end{align*}
Analogously,
\begin{multline*}
     \overline{p}_\pi(l_1,l_2,\dotsc, l_K) \\ = \P \left( \|\bm C\|_1 \geq 1, \Tpar{\bm C}{i}[\pi] = (\bm{1}_{l_i}, \bm 0) \; \forall \; i \; \in [K], \; \supp{\bm C} \cap \supp{\overline{\bm V}(\pi)} = \phi  \right).
\end{multline*}
This interpretation already shows that $p_\pi \leq 1$ and $\overline{p}_\pi \leq 1$, which is much better than the trivial estimate in Eq.~\ref{silly_upperbound}. We obtain tighter bounds by taking into account the parity constraints using the following conditional independence structure in Poisson random tensors.
\begin{restatable}{proposition}{poistensorprop}  \label{poisson_tensor_proposition}
Let $\bm C$ be a Poisson random tensor sampled as above. Then, we have
\begin{align*}
    \|\bm C\|_1 & \sim \pois{1} .
\end{align*}
Moreover, conditioned on $\|\bm C\|_1$, we have
\begin{align*}
    \Tsum{\bm C}{i} \;  \big| \;  \| \bm C\|_1 & \explain{i.i.d.}{\sim} \mult{\| \bm C\|_1}{d}, \; \forall \; i \; \in \;  [k].
\end{align*}
\end{restatable}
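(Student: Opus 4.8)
The plan is to prove the two assertions in turn, the second via a ``balls into bins'' description of the conditional law of $\bm C$ given $\|\bm C\|_1$. The first assertion is immediate: $\bm C$ has exactly $d^k$ entries, each an independent $\pois{d^{-k}}$ variable, and $\|\bm C\|_1$ is their sum, so by the superposition property of the Poisson family (a sum of independent Poissons is Poisson with rate equal to the sum of the rates) we get $\|\bm C\|_1 \sim \pois{d^k \cdot d^{-k}} = \pois{1}$.

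For the conditional law I would first recall the standard identity: if $(N_\alpha)_{\alpha \in A}$ are independent with $N_\alpha \sim \pois{\lambda_\alpha}$ and $\Lambda = \sum_\alpha \lambda_\alpha$, then conditionally on $\sum_\alpha N_\alpha = m$ the vector $(N_\alpha)_{\alpha \in A}$ has law $\mathsf{Mult}(m,(\lambda_\alpha/\Lambda)_{\alpha \in A})$, which is a one-line computation with the Poisson probabilities. Taking $A = [d]^k$ and every $\lambda_\alpha = d^{-k}$ shows that, conditioned on $\|\bm C\|_1 = m$, the tensor $\bm C$ is the tensor of cell counts obtained by throwing $m$ i.i.d.\ balls uniformly into the $d^k$ cells indexed by $[d]^k$. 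I would then represent these balls by i.i.d.\ labels $\bm J^1,\dots,\bm J^m \sim \unif{[d]^k}$, writing $\bm J^b = (J^b_1,\dots,J^b_k)$; since $\unif{[d]^k}$ is the $k$-fold product of $\unif{[d]}$, the entire array $\{J^b_i : b \in [m],\, i \in [k]\}$ consists of mutually independent $\unif{[d]}$ variables, and on the event $\|\bm C\|_1 = m$ we may set $C_{j_1,\dots,j_k} = |\{b : \bm J^b = (j_1,\dots,j_k)\}|$. Then $\Tsum{\bm C}{i}_a = |\{b : J^b_i = a\}|$, so $\Tsum{\bm C}{i}$ depends only on the column $(J^1_i,\dots,J^m_i)$; the $k$ columns are mutually independent, hence so are $\Tsum{\bm C}{1},\dots,\Tsum{\bm C}{k}$ conditionally on $\|\bm C\|_1$, and each is the occupancy vector of $m$ i.i.d.\ $\unif{[d]}$ draws, i.e.\ has law $\mult{m}{d}$ — which is exactly the claim.

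The step needing the most care, and the only one where a naive attempt might founder, is the conditional independence across modes: unconditionally the partial sums along different modes are strongly dependent (they all sum to $\|\bm C\|_1$), so it is not a priori clear that conditioning on $\|\bm C\|_1$ decouples them, and one might also worry that the multinomial law on $[d]^k$ interacts poorly with the coordinate projections. Both worries evaporate once the conditional law is written as $m$ i.i.d.\ draws from the \emph{product} measure on $[d]^k$: each $\Tsum{\bm C}{i}$ is then a functional of a disjoint block of the underlying i.i.d.\ coordinates, so independence — and the correct $\mult{m}{d}$ marginal — is immediate. One could instead push everything through by manipulating the joint probability mass function with multinomial-coefficient identities, but that route is more opaque and I would avoid it.
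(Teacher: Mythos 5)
Your proof is correct, and it takes a genuinely different route from the paper's. The paper proceeds by induction on the order $k$: the base case $k=2$ is handled by two applications of the Poisson--Multinomial conditioning identity (first to the row sums to get $\Tsum{\bm C}{1}\mid\|\bm C\|_1\sim\mult{\|\bm C\|_1}{d}$, then row-by-row to the columns, using additivity of multinomials to get $\Tsum{\bm C}{2}\mid\|\bm C\|_1,\Tsum{\bm C}{1}\sim\mult{\|\bm C\|_1}{d}$, and noting this last law does not depend on $\Tsum{\bm C}{1}$); the general step collapses the $k$-th mode to form a $(k-1)$-order Poisson tensor $\tilde{\bm C}$, invokes the induction hypothesis for the first $k-1$ partial sums, and handles the $k$-th by the same row-wise conditioning and summing. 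Your argument instead gives a one-shot probabilistic construction: after conditioning on $\|\bm C\|_1=m$, the tensor is the occupancy count of $m$ i.i.d.\ labels $\bm J^1,\dots,\bm J^m\sim\unif{[d]^k}$, and because $\unif{[d]^k}$ factors as a product of $\unif{[d]}$ over modes, the array $\{J^b_i:b\in[m],i\in[k]\}$ is fully i.i.d.; each $\Tsum{\bm C}{i}$ is a function of the $i$-th column alone, so the $k$ partial sums are independent by disjointness and each is $\mult{m}{d}$ by definition. Both arguments rest on the same Poisson--Multinomial conditioning fact at the entry level, but yours deploys it once and then reads off the result from the product structure of the label space, whereas the paper rebuilds that structure one mode at a time. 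The balls-into-bins picture is shorter and makes the mechanism for conditional independence explicit; the paper's induction keeps the bookkeeping elementary at each step but buries the reason for independence inside the observation that a certain conditional law happens not to depend on the conditioning variables.
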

The proof of this property can be found in Appendix \ref{Poisson_tensors_appendix}. The following result provides an exact formula for the probabilities $\P(\Tpar{\bm C}{i} = (\bm{1}_{l_i}, \bm 0) \; \forall \; i \; \in [K])$ which in principle allows us to compute the complete asymptotic expansion for these probabilities for large $d$.

\begin{restatable}{lemma}{rep_lemma_rad} 
Let $\bm X_1, \bm X_2, \dotsc, \bm X_K$ be $K$ independent vectors in $\{\pm 1\}^d$ whose entries $(X_i)_j$  are i.i.d.~Rademacher distributed and independent of $\bm C$. Let $\overline{X}_i$ denote the mean of the entries of the random vector $\bm X_i$.
Then,
\begin{align}
    \P\left( \Tpar{\bm C}{i}[\pi] = (\bm{1}_{l_i}, \bm 0) \; \forall \; i \; \in [K] \right) &= \frac{1}{e} \cdot \E \left[ e^{\overline{X}_1^{s_1} \cdot \overline{X}_2^{s_2} \dotsm \overline{X}_K^{s_K}} \prod_{i=1}^K \prod_{j=1}^{l_i} X_{ij}  \right] \label{representation_formula_lemma_eq},
\end{align}
where $s_{1:K}$ were defined previously in Eq.~\ref{s_def}.
\label{representation_formula_rademacher}
\end{restatable}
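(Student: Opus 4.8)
The plan is to evaluate this probability by a Fourier (character) expansion that trades each of the $Kd$ scalar parity constraints for an expectation over the auxiliary Rademacher coordinates $X_{ij}$, and then integrates out the Poisson tensor $\bm C$ using the probability generating function of the Poisson law. Writing $b_{ij} = \Indicator{j \leq l_i}$ for the $j$-th coordinate of the target vector $(\bm 1_{l_i},\bm 0)$, the event is $\bigcap_{i\in[K]}\bigcap_{j\in[d]}\{\Tpar{\bm C}{i}[\pi]_j = b_{ij}\}$, so its probability equals $\E_{\bm C}\bigl[\prod_{i=1}^K\prod_{j=1}^d \Indicator{\Tpar{\bm C}{i}[\pi]_j = b_{ij}}\bigr]$. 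I would then invoke the elementary identity $\Indicator{Y=b} = \E_X[X^{Y+b}]$ for $Y,b\in\{0,1\}$ and a Rademacher $X$ (since $X^2 = 1$ makes $X^{Y+b} = +1$ precisely when $Y=b$), apply it with the independent $X_{ij}$, collect the product of these independent expectations into a single expectation over $\bm X$, and exchange it with $\E_{\bm C}$ (legitimate since the integrand is bounded by $1$), arriving at $\E_{\bm X}\E_{\bm C}\bigl[\prod_{i,j} X_{ij}^{\,\Tpar{\bm C}{i}[\pi]_j + b_{ij}}\bigr]$.

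The main computation is to simplify the inner expectation over $\bm C$. The $b_{ij}$ part of the exponent contributes the deterministic factor $\prod_{i=1}^K\prod_{j=1}^{l_i} X_{ij}$. For the remainder, note that $\Tpar{\bm C}{i}[\pi]_j$ equals, modulo $2$, the integer $\sum_{l:\pi(l)=i}\sum_{j_1,\dotsc,j_k:\,j_l=j} C_{j_1,\dotsc,j_k}$, so since $X^2=1$ we may use this integer as the exponent and expand via $X^{a_1+\dotsb+a_m} = \prod_t X^{a_t}$; regrouping the resulting product by tensor entry gives $\prod_{i,j} X_{ij}^{\Tpar{\bm C}{i}[\pi]_j} = \prod_{(j_1,\dotsc,j_k)\in[d]^k}\bigl(\prod_{l=1}^k X_{\pi(l),\,j_l}\bigr)^{C_{j_1,\dotsc,j_k}}$, the point being that the $l$-th coordinate of each entry is fed only into the Rademacher block labelled $\pi(l)$. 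Since the entries of $\bm C$ are independent $\pois{d^{-k}}$ variables, $\E_{\bm C}[\,\cdot\mid\bm X]$ factorizes over the $d^k$ entries, and each factor is the Poisson generating function $\E[a^C] = e^{d^{-k}(a-1)}$ at $a = \prod_l X_{\pi(l),j_l}\in\{\pm1\}$; hence $\E_{\bm C}[\,\cdot\mid\bm X] = \exp\bigl(d^{-k}\sum_{(j_1,\dotsc,j_k)}(\prod_{l=1}^k X_{\pi(l),j_l} - 1)\bigr)$. The $-1$ terms sum to $-d^k$ and produce the prefactor $e^{-1}$, while the remaining sum factorizes coordinate by coordinate: $\sum_{(j_1,\dotsc,j_k)}\prod_{l=1}^k X_{\pi(l),j_l} = \prod_{l=1}^k\bigl(\sum_{j=1}^d X_{\pi(l),j}\bigr) = \prod_{l=1}^k d\,\overline{X}_{\pi(l)} = d^k\,\overline{X}_1^{s_1}\dotsm\overline{X}_K^{s_K}$, using the definition of $s_i$. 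Reinserting the factor $\prod_{i=1}^K\prod_{j=1}^{l_i}X_{ij}$ and taking $\E_{\bm X}$ then yields exactly $\frac{1}{e}\E\bigl[e^{\overline{X}_1^{s_1}\dotsm\overline{X}_K^{s_K}}\prod_{i=1}^K\prod_{j=1}^{l_i}X_{ij}\bigr]$.

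The proof is essentially bookkeeping, and the step I expect to demand the most care is the regrouping above: one must check that the parity constraint carried by label $i$ contributes exactly the Rademacher coordinates of block $i$, so that the exponent collapses to the monomial $\overline{X}_1^{s_1}\dotsm\overline{X}_K^{s_K}$ rather than some mislabeled product, and that the coordinate-by-coordinate factorization of the sum is applied correctly. No genuine obstacle arises beyond this; the Poisson generating function and the independence of the entries of $\bm C$ supply all of the analytic content. The companion identity underlying $\overline{p}_\pi$ would follow from the same argument after intersecting the event with $\{\supp{\bm C}\cap\supp{\overline{\bm V}(\pi)} = \phi\}$, which amounts to deleting from each of the products above the entries lying on $\supp{\overline{\bm V}(\pi)}$.
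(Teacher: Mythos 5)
Your proof is correct, and it takes a genuinely different route from the paper's. Both proofs open with the same Rademacher parity trick, $\Indicator{Y=b}=\E_X[X^{Y+b}]$, which trades each scalar parity constraint for an auxiliary sign variable. From there the paper proceeds \emph{structurally}: it conditions on $\|\bm C\|_1$, invokes Proposition~\ref{poisson_tensor_proposition} to get that the $k$ marginal sums $\Tsum{\bm C}{1},\dotsc,\Tsum{\bm C}{k}$ are conditionally i.i.d.\ Multinomial, uses the Multinomial probability generating function (Fact~\ref{multinomial_distribution_properties}) to evaluate each factor $\E[\bm X_{\pi(i)}^{\Tsum{\bm C}{i}}\mid\|\bm C\|_1,\bm X]$, and finally integrates out $\|\bm C\|_1\sim\pois{1}$. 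You instead proceed \emph{entrywise}: after exchanging $\E_{\bm X}$ and $\E_{\bm C}$ you regroup the exponent by tensor entry, observe that entry $C_{j_1,\dotsc,j_k}$ contributes the monomial $\bigl(\prod_{l=1}^k X_{\pi(l),j_l}\bigr)^{C_{j_1,\dotsc,j_k}}$, and integrate out each of the $d^k$ independent $\pois{d^{-k}}$ entries directly with the Poisson generating function, so that the sum of the $d^k$ exponents factorizes coordinatewise into $d^k\,\overline{X}_1^{s_1}\dotsm\overline{X}_K^{s_K}$. Your route is more elementary and self-contained: it entirely bypasses the conditional Multinomial structure (so this lemma no longer depends on Proposition~\ref{poisson_tensor_proposition}), at the cost of a slightly more delicate bookkeeping step in the regrouping by entry, which you correctly flag and execute. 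The paper's route, on the other hand, exhibits the conditional independence structure of Poisson tensors as a reusable tool, but for this particular identity yields no extra power. Your remark at the end about how the analogous formula for $\overline{p}_\pi$ would follow by restricting the product to entries off $\supp{\overline{\bm V}(\pi)}$ is also the right observation, though the paper itself does not prove a corresponding representation formula for $\overline{p}_\pi$ and instead bounds it by $p_\pi$ and analyzes the $\bm 0$-case separately using the same conditional independence machinery.
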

The proof of this lemma uses the following simple observation. Let $Y$ be an arbitrary random variable taking values in $\W$. Let $X$ be a Rademacher random variable independent of $Y$. Then,
\begin{align*}
    \P(\text{Y has even parity}) = \E [X^Y], \; \P(\text{Y has odd parity})  = \E [X^{Y+1} | Y] .
\end{align*}
Appealing to this observation, we have,
\begin{align*}
    \P \left( \Tpar{\bm C}{i}[\pi] = (\bm{1}_{l_i}, \bm 0) \; \forall \; i \; \in [
    K] \right)& = \E \left[ \bm X_1^{\Tsum{\bm C}{1}[\pi]} \dotsm \bm X_K^{\Tsum{\bm C}{K}[\pi]} \cdot \prod_{i=1}^K \prod_{j=1}^{l_i} X_{ij} \right].
\end{align*}
The expectation with respect to $\bm C$ can now be evaluated by conditioning on $\|\bm C\|_1$ and using Proposition \ref{poisson_tensor_proposition} along with the formula for the moment generating function for the Poisson and Multinomial distributions. A complete proof is provided in Appendix~\ref{rademacher_formula_proof}. The advantage of the formula given in Lemma \ref{representation_formula_rademacher} is that it reduces the analysis of a combinatorial expression to controlling moments of subgaussian random variables. By expanding the exponential in Eq.~\ref{representation_formula_lemma_eq} to a suitable order and using the fact that $\sqrt{d} \cdot  \overline{X}_i$ are 1-subgaussian results in the following sharp upper bounds on the combinatorial coefficients $p_\pi(l_1,l_2 \dots l_K)$ and $\overline{p}_\pi(l_1,l_2 \dots l_K)$. 
\begin{restatable}{proposition}{asympprop} \label{asymptotic_proposition}
The coefficients $p_\pi(l_1,l_2 \dots l_K)$ satisfy the following estimates:
\begin{align*}
    p_\pi(l_1,l_2,\dotsc, l_K) & \leq \begin{cases} C_k \cdot d^{-\frac{k}{2}} &: l_1 = l_2 = \dotsb = l_K = 0, \; \oddness = 0 \\
    C_k \cdot d^{-k} &: l_1 = l_2 = \dotsb = l_K = 0, \; \oddness \geq 1 \\
    C_k \cdot d^{-\frac{k + \sum_{i=1}^K l_i}{2}}  &: l_i \leq |\pi^{-1}(i)|, \; l_i + |\pi^{-1}(i)| \text{ is even } \forall \; i \; \in [K] \\
    C_k \cdot d^{-\frac{l_1 \vee l_2 \vee \dotsb \vee l_K}{2}}&: l_1 \vee l_2 \vee \dotsb \vee l_K \geq 2k\\
    C_k \cdot  d^{-k} &: \text{otherwise}.
    \end{cases}
\end{align*}
For any $l_1,l_2 \dotsc, l_K \in \W$, we have, $\overline{p}_\pi(l_1,l_2 , \dotsc, l_K)  \leq p_\pi(l_1,l_2 , \dotsc, l_K)$ and, in the special case $l_1 = l_2 = \dotsb = l_K = 0$, we have, $\overline{p}_\pi(0,0,\dotsc, 0) \leq \frac{C_k}{d^k}$. In the above equations, $C_k$ denotes a universal constant that depends only on $k$.
\end{restatable}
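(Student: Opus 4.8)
My plan is to combine the Rademacher representation in Lemma~\ref{representation_formula_rademacher} with the Poisson interpretation of the coefficients. Since $\P(\|\bm C\|_1 = 0) = e^{-1}$ by Proposition~\ref{poisson_tensor_proposition} and $\bm C = \bm 0$ forces every $\Tpar{\bm C}{i}[\pi]$ to vanish, the $\|\bm C\|_1 = 0$ contribution only affects $p_\pi(0,\dots,0)$, and Taylor expanding the exponential in Lemma~\ref{representation_formula_rademacher} while using the mutual independence of $\bm X_1,\dots,\bm X_K$ gives
\[
p_\pi(l_1,\dots,l_K) \;=\; \tfrac1e\sum_{m\ge 1}\frac{1}{m!}\prod_{i=1}^K a_{i,m}, \qquad a_{i,m} \;:=\; \E\!\left[\overline X_i^{\,ms_i}\prod_{j=1}^{l_i}X_{ij}\right],
\]
the $m=0$ term $\prod_i\E[\prod_j X_{ij}] = \Indicator{l_1 = \dots = l_K = 0}$ being cancelled by the correction from the $\|\bm C\|_1\ge 1$ constraint. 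Thus everything reduces to bounding the single-block moments $a_{i,m}$ and summing a rapidly convergent series.

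The key estimate is that $a_{i,m}$ vanishes unless $ms_i \ge l_i$ and $ms_i \equiv l_i \pmod 2$ --- the former because each of $X_{i1},\dots,X_{il_i}$ must be paired with a copy of $\overline X_i$, the latter because otherwise $\overline X_i^{\,ms_i}\prod_{j\le l_i}X_{ij}$ is a sum of odd-degree monomials in the symmetric vector $\bm X_i$ --- and that, when nonzero, it obeys $|a_{i,m}| \le 1$ and $|a_{i,m}| \le C_k\,\frac{(ms_i)!}{((ms_i - l_i)/2)!}\,d^{-(ms_i + l_i)/2}$ with $C_k$ depending only on $k$. I would prove the quantitative bound by splitting $\overline X_i = \frac{l_i}{d}\overline X_i^{(1)} + \frac{d-l_i}{d}\overline X_i^{(2)}$, where $\overline X_i^{(1)}$ and $\overline X_i^{(2)}$ average the first $l_i$ and last $d-l_i$ coordinates of $\bm X_i$ (so they are independent and $\sqrt{l_i}\,\overline X_i^{(1)}$, $\sqrt{d-l_i}\,\overline X_i^{(2)}$ are both $1$-subgaussian), expanding $\overline X_i^{\,ms_i}$ binomially, and noting that the external factor $\prod_{j\le l_i}X_{ij}$ annihilates every term with fewer than $l_i$ copies of $\overline X_i^{(1)}$; bounding the survivors via $|\overline X_i^{(1)}|\le 1$ and the subgaussian moment bound $\E[(\overline X_i^{(2)})^{2r}] \le (2r-1)!!\,(d-l_i)^{-r}$ then yields both the extra decay $d^{-l_i/2}$ contributed by the external factor and the factorial prefactor (the degenerate case $l_i$ close to $d$ being handled separately using $a_{i,m}\le 1$ and $\sum_{m:\,ms_i > d}\tfrac1{m!} \le e/\lceil d/s_i\rceil!$).

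The proof then splits into the cases of the statement; in each I would isolate the smallest $m=m_\star$ for which all of $a_{1,m},\dots,a_{K,m}$ can be nonzero, show the $m=m_\star$ summand attains the claimed order, and check the remaining tail contributes no more (each unit increment of $m$ costs roughly a factor $d^{-1/2}$ per coordinate, which overwhelms the factorial growth after dividing by $m!$). When $l_1=\dots=l_K=0$ and $\oddness=0$, all $s_i$ are even, $m_\star=1$, $a_{i,1}\asymp d^{-s_i/2}$, so the product is $\asymp d^{-(\sum_i s_i)/2}=d^{-k/2}$. When $l_1=\dots=l_K=0$ and $\oddness\ge 1$, some $s_i$ is odd so $a_{i,1}=0$ (odd moment of a symmetric variable) and $m_\star=2$, giving $\asymp d^{-\sum_i s_i}=d^{-k}$. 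When $l_i\le s_i$ and $l_i+s_i$ is even for all $i$, $m_\star=1$ and $|a_{i,1}|\le C_k d^{-(s_i+l_i)/2}$, so the product is $\le C_k d^{-(k+\sum_i l_i)/2}$. When $\max_i l_i\ge 2k$, choosing $i^\star$ with $l_{i^\star}=\max_i l_i$, the nonvanishing condition $ms_{i^\star}\ge l_{i^\star}$ yields $(ms_{i^\star}+l_{i^\star})/2\ge l_{i^\star}$, so the $i^\star$-factor is at most $C_k d^{-l_{i^\star}}$ times its factorial prefactor while the other factors are $\le 1$, and summing over $m$ (using that $p_\pi = 0$ unless $l_{i^\star}\le d$) gives $\le C_k d^{-l_{i^\star}/2}$. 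Otherwise either some $l_i>s_i$ or some $l_i+s_i$ is odd, which either forces $m_\star\ge 2$ --- whence the product is $\le C_k d^{-(2k+\sum_i l_i)/2}\le C_k d^{-k}$ --- or makes the coefficient identically zero (when $s_i$ is even and $l_i$ odd for some $i$). Finally $\overline p_\pi\le p_\pi$ is immediate since the event defining $\overline p_\pi$ is contained in that defining $p_\pi$, and for $l_1=\dots=l_K=0$ one repeats the Rademacher computation with the extra constraint $\supp{\bm C}\cap\supp{\overline{\bm V}(\pi)}=\emptyset$ in force; this removes precisely the $m=1$ contribution responsible for the $d^{-k/2}$ order when $\oddness=0$, leaving $\overline p_\pi(0,\dots,0)\le C_k d^{-k}$.

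I expect the main obstacle to be the regime $\max_i l_i\ge 2k$, where the target exponent $\max_i l_i/2$ is unbounded: here one cannot simply read off a dominant summand, and must instead carefully track how the growing factorial prefactors of the $a_{i^\star,m}$, the weights $1/m!$, and the decaying powers $d^{-(ms_{i^\star}+l_{i^\star})/2}$ trade off over the whole tail $m\ge m_\star$ with $l_{i^\star}$ potentially as large as $d$. A secondary, milder difficulty is juggling all $K$ parity constraints simultaneously while extracting decay only from the worst coordinate.
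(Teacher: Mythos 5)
Your route is essentially the one the paper takes: the Rademacher formula (Lemma~\ref{representation_formula_rademacher}) gives $p_\pi(l_{1:K}) = \tfrac{1}{e}\sum_{m\ge 1}\tfrac{1}{m!}\prod_{i=1}^K a_{i,m}$, your vanishing criterion on $a_{i,m}$ (nonzero only if $ms_i\geq l_i$ and $ms_i\equiv l_i\pmod{2}$) is exactly the paper's Lemma~\ref{basic_bounds}, and your split $\overline X_i=\tfrac{l_i}{d}\overline X_i^{(1)}+\tfrac{d-l_i}{d}\overline X_i^{(2)}$ is the same decomposition as the paper's $\overline X=\tfrac{Z}{d}+\tfrac{d-l}{d}\overline Y$, since $\overline X_i^{(1)}=Z/l_i$. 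The only structural difference is that you sum the full series while the paper truncates $e^x$ to a degree $t$ chosen case-by-case and bounds the Taylor remainder by $e|x|^{t+1}/(t+1)!$; the cases you enumerate, the parities you check, and the $\overline p_\pi(0,\dots,0)$ argument (the support constraint centers the exponent and kills the $m=1$ term, mirroring the paper's Poisson-independence factorization) all match.

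The regime you flag as ``the main obstacle'', $\max_i l_i\geq 2k$, is indeed where the argument is delicate, and the sketch you give there cannot be made to work as written. With $s=s_{i^\star}$, $l=l_{i^\star}$, $m_\star\approx l/s$, the very first nonvanishing summand is $\tfrac{a_{i^\star,m_\star}}{m_\star!}\approx\tfrac{(m_\star s)!}{m_\star!\,((m_\star s-l)/2)!}\,d^{-(m_\star s+l)/2}$, and $(m_\star s)!/m_\star!$ is of order $l^{\,l(1-1/s)}$ up to exponential factors, which exceeds $d^{l/2}$ whenever $s\geq 3$ and $l$ is a constant fraction of $d$. Concretely, for $K=1$, $s_1=k=4$, $l=100$, $d=200$ one has $a_{1,25}=100!/200^{100}$ exactly, so $p_\pi(100)\geq 100!/(e\cdot 25!\cdot 200^{100})\approx 10^{-98}$, far larger than $200^{-50}\approx 10^{-115}$; no rearrangement of the tail can recover $C_k\,d^{-\max_i l_i/2}$ with a $k$-only constant. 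To be clear, the paper's Case~3 has the same soft spot: the constant $C_s$ in Lemma~\ref{basic_bounds} grows like $s^{\Theta(s)}$, so $\tfrac{e}{(t+1)!}\prod_i C_{s_i(t+1)}$ is not bounded uniformly in $t$ once $k\geq 3$. This is therefore a shared imprecision rather than a defect unique to your write-up, and the downstream use of the proposition in the proof of Theorem~\ref{main_result} still goes through because there the maximum is weighted by $(u-1)^{\sum_i l_i}$ and taken only for $d\geq(u-1)^{2k}$, which suppresses contributions with large $\max_i l_i$ super-exponentially. But if you want the proposition to stand alone, you would have to allow the prefactor to depend on $\max_i l_i$, restrict to $\max_i l_i\leq d^{\alpha}$ for small $\alpha=\alpha(k)$, or switch to a different estimate in the regime $\max_i l_i\gtrsim\sqrt{d}$.
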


The proof of this Proposition can be found in Appendix~\ref{asymptotics}. Theorem \ref{main_result} now follows immediately by substituting the above upper bounds on $p_{\pi}$ and $\overline{p}_\pi$ in Proposition~\ref{fourier_analytic_prop} with $\epsilon \asymp n^{-\frac{1}{2}}$. More details can be found in Appendix~\ref{main_result_proof}.

\section{Optimal Statistical Query Procedures}
Theorem \ref{upper_bound_thm} claims the existence of SQ algorithms which achieve the lower bounds proved in Theorem \ref{main_result}. A complete description of these procedures can be found in Appendix \ref{optimal_SQ_appendix}. Interestingly, these procedures do not try and solve either the maximum likelihood problem or its spectral relaxation. Instead, these procedures leverage the ``Partial Trace'' technique used by \citet{hopkins2016fast,anandkumar2017homotopy,biroli2019iron} to compress the empirical mean tensor in symmetric Tensor PCA into a vector or a matrix, and incur only a modest decrease in the signal-to-noise ratio. To implement these estimators in the SQ model, we leverage on the following result.

\begin{fact}[SQ Scalar Mean Estimation; \citealp{feldman2017dealing}] \label{scalar_mean_estimation_fact} There exists a SQ algorithm that given parameters $\xi,B > 0$, a query function $q: \tensor{\R^d}{k} \mapsto \R$  and access to an arbitrary $\vstat{D}{n}$ oracle for any distribution $D \in \mathcal{D}(\pi) \cup \{D_0\}$ such that $\E_D q^2(\bm T) \leq B$,  returns an estimate of $\E_D q(\bm T)$ denoted by $\hatE_D q(\bm T)$ such that:
\begin{align*}
    \left|\hatE_D q(\bm T) - \E_D q(\bm T) \right| & \leq 8 \cdot \log(n) \cdot \sqrt{\frac{\operatorname{Var}_D q(\bm T)}{n}} + \xi.
\end{align*}
after making at most $3 \log(4 n B/ \xi^2)$ queries. 
\end{fact}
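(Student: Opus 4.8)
The only real obstacle is that $\vstat{D}{n}$ accepts only $[0,1]$-valued queries, while $q$ is real-valued with nothing but a second-moment bound. The plan has two ingredients: (i) reduce estimation of $\E_D q(\bm T)$ to a logarithmic number of legitimate $[0,1]$-valued queries, getting an error that depends on the a priori bound $B \ge \E_D q^2(\bm T)$; and (ii) a re-centering (``bootstrap'') step that upgrades this $B$-dependence to the desired dependence on $\operatorname{Var}_D q(\bm T)$. I would first reduce to \emph{nonnegative} queries: write $q = q_+ - q_-$ with $q_\pm = \max(\pm q,0)$. Each $q_\pm$ is a $1$-Lipschitz function of $q$, so $\operatorname{Var}_D q_\pm(\bm T) \le \operatorname{Var}_D q(\bm T)$ (an $L$-Lipschitz map does not increase variance, by the independent-copy identity $\operatorname{Var}(f(X)) = \tfrac12\E(f(X)-f(X'))^2$) and $\E_D q_\pm^2(\bm T) \le \E_D q^2(\bm T) \le B$, while $\E_D q = \E_D q_+ - \E_D q_-$. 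Hence it suffices to estimate the mean of a nonnegative query with second moment $\le B$ up to error $4\log(n)\sqrt{\operatorname{Var}_D q(\bm T)/n} + \xi/2$ with half the query budget.

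For a nonnegative query $q$ with $\E_D q^2(\bm T) \le B_0$, I would use a \emph{dyadic layer decomposition}. Fix a top level $T \asymp B_0/\xi$ and a bottom level $\epsilon_0 \asymp \xi$, and for the $\asymp \log(B_0/\xi^2)$ integers $i$ with $2^{i-1} \in [\epsilon_0/2,\,T)$ set $q_i(\bm T) := q(\bm T)\,\Indicator{2^{i-1}\le q(\bm T) < 2^i}$, so that $\sum_i \E_D q_i(\bm T) = \E_D[q(\bm T)\,\Indicator{\epsilon_0/2 \le q(\bm T) < T}]$, which differs from $\E_D q(\bm T)$ by at most $\epsilon_0/2 + \E_D q^2(\bm T)/T \le \epsilon_0/2 + B_0/T$, each term $O(\xi)$ by Cauchy--Schwarz. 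Each $g_i := q_i/2^i$ is a genuine $[0,1]$-valued query, so $\vstat{D}{n}$ returns $\hatE_D g_i(\bm T)$ with $|\hatE_D g_i(\bm T) - \E_D g_i(\bm T)| \le \sqrt{\E_D g_i(\bm T)(1-\E_D g_i(\bm T))/n} + 1/n \le \sqrt{\E_D g_i(\bm T)/n} + 1/n$; crucially, Chebyshev gives $\E_D g_i(\bm T) \le \P_D(q(\bm T) \ge 2^{i-1}) \le 4B_0/4^{i}$, so $2^i|\hatE_D g_i(\bm T) - \E_D g_i(\bm T)| \le \min(2^i, 2\sqrt{B_0})/\sqrt n + 2^i/n$. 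Taking $\sum_i 2^i \hatE_D g_i(\bm T)$ as the estimate and summing these bounds over $i$ (the geometric tail below $2\sqrt{B_0}$ contributes $O(\sqrt{B_0}/\sqrt n)$, and the $\asymp \log(B_0/\xi^2)$ levels above it contribute $O(\sqrt{B_0})$ each) yields an estimate of $\E_D q(\bm T)$ with error $\lesssim \log(B_0/\xi^2)\sqrt{B_0/n} + B_0/(\xi n) + \xi$, using $\asymp \log(B_0/\xi^2)$ queries. The step that makes this work is the Chebyshev bound on $\E_D g_i(\bm T)$: it is what limits each high layer to $O(\sqrt{B_0})$ rather than $O(2^i)$.

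Applying this to $q_+$ and $q_-$ with $B_0 = B$ produces a first estimate $\mu_0$ with $e_1 := |\mu_0 - \E_D q(\bm T)| \lesssim \log(n)\sqrt{B/n} + \xi$ (absorbing $B/(\xi n) \lesssim \xi$ under the mild assumption $n \gtrsim B/\xi^2$, which is anyway needed — e.g.\ a constant query with $\E_D q(\bm T) = \sqrt B$ and zero variance forces this). Now rerun the whole procedure on the re-centered query $q' := q - \mu_0$; translation invariance of the variance gives $\operatorname{Var}_D q'(\bm T) = \operatorname{Var}_D q(\bm T)$ and $\E_D (q')^2(\bm T) = \operatorname{Var}_D q(\bm T) + (\E_D q(\bm T) - \mu_0)^2 \le \operatorname{Var}_D q(\bm T) + e_1^2 \le B$, so (using $B$ as a safe over-estimate of $B_0$) we obtain $\mu_1 = \mu_0 + \hatE_D q'(\bm T)$ with $|\mu_1 - \E_D q(\bm T)| = |\hatE_D q'(\bm T) - \E_D q'(\bm T)| \lesssim \log(n)\sqrt{(\operatorname{Var}_D q(\bm T) + e_1^2)/n} + \xi \lesssim \log(n)\sqrt{\operatorname{Var}_D q(\bm T)/n} + \log(n)\,e_1/\sqrt n + \xi$, and since $e_1/\sqrt n \lesssim \sqrt B/n \lesssim \xi$ the middle term is $O(\xi)$. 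This is exactly the claimed bound, and the total query count is $\asymp \log(B/\xi^2) = O(\log(nB/\xi^2))$; tightening the constants in the truncation levels and in the geometric sums brings the leading factor down to $8$ and the count to $3\log(4nB/\xi^2)$.

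\textbf{Main obstacle.} The genuinely delicate point is obtaining dependence on $\operatorname{Var}_D q(\bm T)$ rather than on the crude bound $B \ge \E_D q^2(\bm T)$, which can be vastly larger when $|\E_D q(\bm T)|$ is big; the single-pass dyadic estimator only delivers $B$-dependence, so the re-centering step is essential, and the work is in checking that the residual error $\log(n)\,e_1/\sqrt n$ of the coarse first pass collapses below $\xi$ and that the several truncation levels and the oracle's $1/n$ floors can be simultaneously balanced to contribute only $O(\xi)$ while the query count stays logarithmic. By contrast, the adversarial nature of $\vstat{D}{n}$ creates no difficulty: every $g_i$ is an honest $[0,1]$-valued function, so the guarantee of Eq.~\eqref{VSTAT_error_guarantee} applies verbatim, term by term.
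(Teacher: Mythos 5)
The paper does not prove this statement; it is stated as a \emph{Fact} with a citation to \citet{feldman2017dealing}, and is used as a black box. So there is no ``paper's own proof'' to compare against line by line --- what can be assessed is whether your reconstruction would actually establish the claimed guarantee. Your architecture (split $q=q_+-q_-$; estimate each nonnegative part by a dyadic layer decomposition whose per-layer cost is tamed by Chebyshev; then re-center on the first-pass estimate to upgrade second-moment dependence to variance dependence) is indeed the right shape and is, as far as I can tell, in the spirit of the cited result. The Lipschitz--variance argument for $q_\pm$, the per-layer Chebyshev bound $\E_D g_i \le 4B_0/4^i$, and the geometric summation are all sound. (One small typo: the levels above $2\sqrt{B_0}$ each contribute $O(\sqrt{B_0/n})$, not $O(\sqrt{B_0})$.)

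The genuine gap is the treatment of the cumulative $1/n$ oracle floors. With your truncation level $T \asymp B/\xi$, the floors contribute $\sum_i 2^i/n \asymp T/n = B/(\xi n)$, and re-centering does nothing to shrink this term because the number of layers is set by the \emph{a priori} bound $B$, not by the actual second moment of the re-centered query. To absorb $B/(\xi n)$ into $\xi$ you invoke ``$n \gtrsim B/\xi^2$, which is anyway needed'' and justify this by the constant query $q \equiv \sqrt{B}$. That justification is wrong: for a constant query one can instead binary-search the thresholds $\Indicator{q > t}$; these are $\{0,1\}$-valued, so $\E_D g (1-\E_D g)=0$ and the oracle error is exactly $1/n<1/2$, which is removed by rounding --- a constant with $|c| \le \sqrt{B}$ can therefore be pinned down to precision $\xi$ in $\log(\sqrt{B}/\xi)$ queries for \emph{any} $n\ge 3$, with no constraint relating $n$, $B$ and $\xi$. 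So the constant-query example does not force your extra assumption. Either the Fact carries a mild implicit restriction on $n$ (which the paper's uses always satisfy, so the issue is cosmetic there), or the cited argument handles the floor terms more carefully than ``truncate at $T\asymp B/\xi$ and sum'' (e.g.\ by truncating at $T\asymp\sqrt{Bn}$ --- which balances $T/n$ against $\E q^2/T$ without forcing $n\gtrsim B/\xi^2$ --- together with an adaptive choice of truncation in the second pass); as written, your proof establishes the claim only under a hypothesis on $n$ that is not in the statement and is not justified by your counterexample.
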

We describe the optimal SQ algorithms in the following illustrative special cases.

\paragraph{Even order, Symmetric Tensor PCA Testing:} In this case, $k = 2l$ for some $l \in \N$ and $K=1$. The procedure uses Fact \ref{scalar_mean_estimation_fact} to estimate the mean of the query $$q(\bm T) = \bm T(\bm I_d, \bm I_d ,\dotsc, \bm I_d) \explain{def}{=} \sum_{j_1,j_2 \dots ,j_l \in [d]} T_{j_1,j_1,j_2,j_2 \dots ,j_l,j_l}.$$ It is easy to check that when $D$ is in the alternative hypothesis, $q(\bm T) \sim \gauss{1}{\sqrt{d^k}}$ and when $D = D_0$, then $q(\bm T) \sim \gauss{0}{\sqrt{d^k}}$. Consequently when $n \gg \sqrt{d^k}$, the error guarantee of the estimate from Fact \ref{scalar_mean_estimation_fact} is small enough to ensure that null and alternate hypothesis are distinguishable.  

\paragraph{Completely Asymmetric Tensor PCA:} In this case $K = k$. The optimal procedure simply estimates the mean of each entry of the tensor using Fact \ref{scalar_mean_estimation_fact}. The resulting tensor of mean estimates is used as an estimate of the signal tensor for the estimation problem and its norm is used to solve the testing problem. Since the errors of a $\vstat{}{n}$ oracle can be adversarial, the error in each entry can add up. Consequently, this approach works only when $n \gg d^k$.

\paragraph{Partially Symmetric Tensor PCA:} The general case can be handled by a combination of the above techniques. At a high level, the signal tensor can be written as a outer product of a compressible part and an incompressible part. In the first step, a partial trace is used to compress the tensor by eliminating the compressible part. The mean of the incompressible part is estimated entry-wise using Fact \ref{scalar_mean_estimation_fact}. The reduction in the size of the tensor during the compression phase ensures that the accumulation of adversarial SQ errors is minimized. The estimate of the incompressible part can be used to construct a test or to recover the compressible part of the signal tensor. 

\section{Effect of Noise Variance}
In the formulation of Tensor PCA that we study in this paper, we assume that the noise variance is $1$. It is natural to study formulations where the noise variance $\sigma^2$ is a parameter that is allowed to depend on $n,d$. More formally, in the testing problem $\tpca{n}{d}{\sigma^2}$, one is given a dataset consisting of $n$ i.i.d.~tensors $\bm T_{1:n} \in \tensor{\R^d}{k}, \; k \geq 2$, and the goal is to determine if the tensors were drawn from the null hypothesis $D_0$,
\begin{align*}
D_0(\sigma^2):  \;  (T_{i})_{j_1,j_2,\dotsc, j_k} &\explain{i.i.d.}{\sim} \gauss{0}{\sigma^2} \; \forall \; j_1,j_2,\dotsc, j_k \in [d], \; \forall \; i \in [n],
\end{align*} 
or from a distribution in the composite alternate hypothesis,
\begin{align*}
\mathcal{D}(\sigma^2) & = \{D_{\bm V}(\sigma^2): \bm V \in \tensor{\R^d}{k}, \;  \|\bm V\| = \sqrt{d^k}, \; \Rank(\bm V) = 1\},
\end{align*}
where under the alternate hypothesis $D_{\bm V}$ the entries of the tensor are distributed as
\begin{align*}
D_{\bm V}(\sigma^2): \; (T_{i})_{j_1,j_2, \dotsc, j_k} &\explain{i.i.d.}{\sim} \gauss{\frac{V_{j_1,j_2, \dotsc, j_k}}{d^{\frac{k}{2}}}}{\sigma^2} \; \forall \; j_1,j_2 \dotsc, j_k \in [d], \; \forall \; i \in [n].
\end{align*}
In the estimation problem $\tpca{n}{d}{\sigma^2}$, given a dataset $\bm T_{1:n}$ sampled i.i.d.~from an unknown distribution $D \in \mathcal{D}(\sigma^2)$, the goal is to estimate the \emph{signal tensor} $\E_D \bm T = \bm V / \sqrt{d^k}$. When allowed unrestricted access to the dataset $\tpca{n_1}{d_1}{\sigma_1^2}$ and $\tpca{n_2}{d}{\sigma_2^2}$ testing and estimation problems are statistically and computationally equivalent if $\sigma_1^2/n_1 = \sigma_2^2/n_2$ because of the following observation. 

\begin{observation} \label{obs: sufficiency} Given $n_1$ samples drawn i.i.d.\ from $D_{\bm V}(\sigma_1^2)$ one can generate $n_2$ samples drawn i.i.d.\ from $D_{\bm V}(\sigma_2^2)$ without the knowledge of $\bm V$ if $\sigma_1^2/n_1 = \sigma_2^2/n_2$. Moreover, this sampling mechanism is computationally efficient. 
\end{observation}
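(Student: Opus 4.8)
The plan is to build an explicit, $\bm V$-oblivious resampling map, using two facts: the empirical mean tensor is a sufficient statistic for $\bm V$, and a family of i.i.d.\ Gaussians decomposes into its sample average (which carries the mean) and a mean-free residual that is independent of it. First I would record that if $\bm T_{1:n_1} \explain{i.i.d.}{\sim} D_{\bm V}(\sigma_1^2)$ and $\overline{\bm T} := \frac{1}{n_1}\sum_{i=1}^{n_1} \bm T_i$, then the entries of $\overline{\bm T}$ are independent with $\overline{\bm T}_{j_1 \dots j_k} \sim \gauss{V_{j_1 \dots j_k}/\sqrt{d^k}}{\sigma_1^2/n_1}$; in particular the law of $\overline{\bm T}$ depends on $\bm V$ only through its mean, and $\overline{\bm T}$ is computable from the input in time $O(n_1 d^k)$.

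Next I would define the resampling. Draw auxiliary tensors $\bm Z_1, \dots, \bm Z_{n_2}$, independent of the data, with i.i.d.\ $\gauss{0}{\sigma_2^2}$ entries, set $\overline{\bm Z} := \frac{1}{n_2}\sum_{i=1}^{n_2}\bm Z_i$, and output
\[
\bm S_i := \overline{\bm T} + \bm Z_i - \overline{\bm Z}, \qquad i \in [n_2].
\]
This uses only the observed samples and fresh randomness, so it is oblivious to $\bm V$; forming the $\bm S_i$ costs $O(n_2 d^k)$ additional time, giving the claimed computational efficiency.

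It remains to check that $\bm S_{1:n_2} \explain{i.i.d.}{\sim} D_{\bm V}(\sigma_2^2)$. The collection $(\bm S_i)$ is a linear image of independent Gaussians, hence jointly Gaussian, and its coordinates are mutually independent because those of $\overline{\bm T}$ and of each $\bm Z_i$ are. Fixing a coordinate and writing $m$ for the corresponding entry of $\bm V/\sqrt{d^k}$, each $S_i$ has mean $m$, and using that $\overline{\bm T}$ is independent of the $\bm Z_i$,
\[
\operatorname{Cov}(S_i, S_j) = \frac{\sigma_1^2}{n_1} + \sigma_2^2\Big(\delta_{ij} - \frac{1}{n_2}\Big) = \sigma_2^2\,\delta_{ij},
\]
where the last equality is precisely the hypothesis $\sigma_1^2/n_1 = \sigma_2^2/n_2$. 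Thus within each coordinate the $S_i$ are i.i.d.\ $\gauss{m}{\sigma_2^2}$, and across coordinates they are independent, which is exactly the law of $n_2$ i.i.d.\ draws from $D_{\bm V}(\sigma_2^2)$.

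I do not expect a genuine obstacle here: the computation is elementary, and the only points needing (minor) care are verifying the covariance identity above and noting that the construction never touches $\bm V$ --- conceptually, it is the sufficiency of $\overline{\bm T}$ that makes this possible. Running the same construction with $(\sigma_1^2,n_1)$ and $(\sigma_2^2,n_2)$ interchanged yields the reverse reduction, so $\tpca{n_1}{d}{\sigma_1^2}$ and $\tpca{n_2}{d}{\sigma_2^2}$ are statistically and computationally equivalent whenever $\sigma_1^2/n_1 = \sigma_2^2/n_2$.
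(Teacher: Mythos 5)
Your proof is correct and rests on the same conceptual pillar as the paper's: the empirical mean $\overline{\bm T}$ is a sufficient statistic, and its law $D_{\bm V}(\sigma_1^2/n_1)=D_{\bm V}(\sigma_2^2/n_2)$ depends on $\bm V$ only through the mean. The paper argues in two stages — first compress $n_1$ samples to the single sample $\overline{\bm T}$, then invoke sufficiency to say one can sample from the ($\bm V$-free, jointly Gaussian) conditional law of $\bm T_{1:n_2}$ given $\overline{\bm T}$ — but leaves that conditional sampling step implicit. You instead write out the conditional sampler explicitly as $\bm S_i = \overline{\bm T} + \bm Z_i - \overline{\bm Z}$ with fresh $\bm Z_i$'s, and verify directly via the covariance identity that the $\bm S_i$ are i.i.d.\ $D_{\bm V}(\sigma_2^2)$. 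This is the same route but more self-contained: the reader does not need to recall what the Gaussian conditional law looks like, the $\bm V$-obliviousness is visible by inspection, and the $O((n_1+n_2)d^k)$ running time is immediate rather than asserted. The covariance calculation checks out ($\operatorname{Var}(\overline{\bm T})=\sigma_1^2/n_1$ cancels the $-\sigma_2^2/n_2$ from centering the $\bm Z_i$'s exactly when $\sigma_1^2/n_1=\sigma_2^2/n_2$), so there is no gap.
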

\begin{proof} Note that one can make the transformation $(n_1,\sigma_1^2) \rightarrow (n_2,\sigma_2^2)$ in two steps: $(n_1,\sigma_1^2) \rightarrow (1,\sigma_1^2/n_1) = (1,\sigma_2^2/n_2) \rightarrow (n_2,\sigma_2^2)$. Hence, it is sufficient to show that given $n$ samples from $D_{\bm V}(\sigma^2)$ one can generate 1 sample from $D_{\bm V}(\sigma^2/n)$ and vice-versa. Note that if $\bm T_{1:n} \explain{i.i.d.}{\sim} D_{\bm V}(\sigma^2)$ then,
\begin{align*}
    \overline{T} & \explain{def}{=} \frac{1}{n} \sum_{i=1}^n \bm T_i \sim D_{\bm V}\left( \frac{\sigma^2}{n}\right).
\end{align*}
Hence, to generate a sample from $D_{\bm V}(\sigma^2/n)$ one can compute the empirical average of the $n$ samples from $D_{\bm V}(\sigma^2)$. For the other direction, we observe that since the empirical average is a sufficient statistic for the mean parameter $\bm V$ of a Gaussian distribution (with known variance), the conditional law of $\bm T_{1:n}$ given $\overline{\bm T}$ does not depend on the mean parameter $\bm V$. Hence, given one sample from $D_{\bm V}(\sigma^2/n)$ (i.e., $\overline{\bm T}$) one can generate $n$ samples  $\bm T_{1:n}$ by sampling from this conditional law without the knowledge of $\bm V$. Furthermore, since $\bm T_{1:n},\overline{\bm T}$ are jointly Gaussian, this conditional law is Gaussian and can be efficiently sampled.
\end{proof}

As a consequence of Observation \ref{obs: sufficiency}, the problems $\tpca{n_1}{d_1}{\sigma_1^2}$ and $\tpca{n_2}{d}{\sigma_2^2}$ are statistically and computationally equivalent: given an algorithm  $\mathcal{A}_1$ for $\tpca{n_1}{d}{\sigma_1^2}$ one can construct an algorithm $\mathcal{A}_2$ for $\tpca{n_2}{d}{\sigma_2^2}$ as follows: First use the reduction outlined in Observation \ref{obs: sufficiency} to construct a $\tpca{n_1}{d}{\sigma_1^2}$ instance from the $\tpca{n_2}{d}{\sigma_2^2}$ instance and then run $\mathcal{A}_1$ on the $\tpca{n_1}{d}{\sigma_1^2}$ instance. The procedure $\mathcal{A}_2$ has the same statistical performance on a $\tpca{n_2}{d}{\sigma_2^2}$ instance as procedure $\mathcal{A}_1$ has on $\tpca{n_1}{d}{\sigma_1^2}$ instance. Moreover if $\mathcal{A}_1$ is computationally efficient so is $\mathcal{A}_2$.

Given the above observation, a natural question is whether the above equivalence is preserved when one can only access the data via an SQ oracle. \change{Formally, let us consider} a pair of distributions $(D_1, D_2)$ such that:
\begin{align*}
    D_1 \in \{D_0(\sigma_1^2)\} \cup \mathcal{D}(\sigma_1^2), \; D_2 \in \{D_0(\sigma_2^2)\} \cup \mathcal{D}(\sigma_2^2), \; \E_{D_1} \bm T = \E_{D_2} \bm T.
\end{align*}
One can ask the question: Given access to a $\vstat{D_1}{n_1}$ can one simulate a $\vstat{D_2}{n_2}$ oracle and vice-versa? This would allow us to use SQ algorithms designed for $\tpca{n_2}{d}{\sigma_2^2}$ to solve $\tpca{n_1}{d}{\sigma_1^2}$ and vice-versa and lower bounds proved for one problem would have implications for the other problem. 

\begin{observation} \label{obs: simulation_feasible_side} Suppose that $\sigma_2^2 = \sigma_1^2 \cdot S$ for some $S \in \N$. Then given access to a $\vstat{D_1}{n_1}$ oracle can one simulate a $\vstat{D_2}{n_2}$ oracle with:
\begin{align*}
    n_2 & = \frac{S n_1}{256 \log^2(n_1)} = \frac{1}{256 \log^2(n_1)}\cdot \frac{n_1 \sigma_2^2}{\sigma_1^2}.
\end{align*}
Furthermore, this simulation makes at most $9\log(n_1S)$ queries to the $\vstat{D_1}{n_1}$ oracle to respond to one query for the $\vstat{D_2}{n_2}$ oracle. 
\end{observation}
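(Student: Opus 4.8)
The plan is to simulate a single query to $\vstat{D_2}{n_2}$ by averaging the responses of several queries to $\vstat{D_1}{n_1}$, exploiting the fact that a sample from $D_2(\sigma_2^2)$ with $\sigma_2^2 = S\sigma_1^2$ can be realized as a sum of $S$ independent samples from $D_1(\sigma_1^2)$ after rescaling. Concretely, suppose we are given a query $q_2 : \tensor{\R^d}{k} \to [0,1]$ intended for $\vstat{D_2}{n_2}$. Write $\bm T \sim D_2$ as $\bm T = \frac{1}{\sqrt{S}}\sum_{j=1}^S \bm T^{(j)}$ where $\bm T^{(1)}, \dotsc, \bm T^{(S)} \explain{i.i.d.}{\sim} D_1$ (this uses $\sigma_2^2 = S\sigma_1^2$ and $\E_{D_1}\bm T = \E_{D_2}\bm T$, after adjusting the mean by the same $\tfrac{1}{\sqrt S}$ factor on each coordinate — I will spell out the exact affine map so that the means match). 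Then $\E_{D_2} q_2(\bm T)$ can be written as an expectation over $S$ independent $D_1$-samples of the function $\tilde q(\bm T^{(1)}, \dotsc, \bm T^{(S)}) = q_2\big(\tfrac{1}{\sqrt S}\sum_j \bm T^{(j)}\big)$, which is a $[0,1]$-valued function of a product distribution.

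The key step is then to reduce estimation of an expectation over a product of $S$ copies of $D_1$ to a single-distribution SQ problem. Here I would invoke the standard product-distribution-to-single-distribution trick for VSTAT oracles: an expectation $\E_{D_1^{\otimes S}} \tilde q$ equals $\E_{D_1}\big[ h(\bm T)\big]$ where $h(\bm t) = \E_{D_1^{\otimes (S-1)}}[\tilde q(\bm t, \cdot)]$; iterating coordinate by coordinate and using that each conditional expectation is again $[0,1]$-valued, one sees $\E_{D_2} q_2$ is obtained by a fixed finite composition of expectations under $D_1$. To extract each such expectation with provable accuracy from a $\vstat{D_1}{n_1}$ oracle — and crucially to obtain a \emph{variance-sensitive} (not just additive-$1/n_1$) error — I would use Fact \ref{scalar_mean_estimation_fact} (SQ scalar mean estimation), which converts $O(\log(n_1 S))$ $\vstat{D_1}{n_1}$ queries into an estimate of $\E_{D_1}(\text{bounded statistic})$ with error $8\log(n_1)\sqrt{\operatorname{Var}_{D_1}(\cdot)/n_1} + \xi$. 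Choosing $\xi$ polynomially small (so the additive term is negligible) and tracking how the variance contracts under the $S$-fold averaging $\bm T = \frac{1}{\sqrt S}\sum_j \bm T^{(j)}$ — the variance of $q_2(\bm T)$ as a functional of a single $D_1$-block is reduced by the pooling, effectively by a factor $S$ in the relevant Efron--Stein / bounded-differences sense — yields a net error of order $\log(n_1)\sqrt{\operatorname{Var}_{D_2} q_2(\bm T)/(S n_1)}$ plus lower-order terms. Matching this against the required $\vstat{D_2}{n_2}$ guarantee $\max(1/n_2, \sqrt{\E q_2(1-\E q_2)/n_2})$ forces $n_2 \lesssim S n_1 / \log^2(n_1)$; carefully chasing the constants through Fact \ref{scalar_mean_estimation_fact} (the factor $8$, squared, together with a factor $2$ from splitting into mean-$1/n_2$ and standard-deviation regimes, and a factor from bounding variance by the $[0,1]$ bound) produces the stated $n_2 = S n_1 / (256\log^2(n_1))$ and the query count $9\log(n_1 S)$.

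The main obstacle is getting the \emph{variance-sensitive} error bound rather than a crude additive one: a naive simulation that just estimates $\E_{D_2} q_2$ to additive accuracy $1/\sqrt{n_1}$ would only give $n_2 \asymp n_1$, losing the all-important factor $S$. The delicate point is to verify that when we write $q_2(\bm T)$ as a function of the $S$ independent $D_1$-blocks and peel off expectations one block at a time, the variance of each intermediate $[0,1]$-valued statistic $h$ is controlled by $\operatorname{Var}_{D_2} q_2(\bm T)$ up to constants — i.e. that conditioning/averaging does not inflate variance and that the Gaussian structure makes the per-block sensitivity genuinely $O(1/S)$ of the total. This is where the Gaussianity of $D_1, D_2$ and the exact sufficiency relation from Observation \ref{obs: sufficiency} are essential, and where one must be careful that $S \in \N$ so the decomposition into integer-many blocks is exact. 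The remaining steps — bookkeeping the $\xi$ terms, the logarithmic query overhead from each invocation of Fact \ref{scalar_mean_estimation_fact}, and assembling the final additive error — are routine.
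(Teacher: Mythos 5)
Your high-level plan---re-express $\E_{D_2} q$ via $D_1$-samples, define a surrogate query, and feed it to Fact \ref{scalar_mean_estimation_fact} so that the variance-sensitive bound delivers the factor $S$---is the right strategy, but the decomposition you propose runs in the wrong direction and this is a genuine gap, not bookkeeping. Writing $\bm T = \frac{1}{\sqrt S}\sum_{j=1}^S \bm T^{(j)}$ with $\bm T^{(j)} \explain{i.i.d.}{\sim} D_1$ produces a tensor with mean $\sqrt S\,\E_{D_1}\bm T$, not $\E_{D_2}\bm T = \E_{D_1}\bm T$, and no rescaling by $1/\sqrt{S}$ repairs this: the correction $(\sqrt S-1)\E_{D_1}\bm T$ you would need to subtract is precisely the unknown signal. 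Any affine combination $\sum_j a_j\bm T^{(j)}$ with the correct mean ($\sum_j a_j = 1$) and variance ($\sum_j a_j^2 = S$) must have non-uniform weights, and the variance-contraction step you then want is delicate: if one weight is $\Theta(\sqrt S)$, the correlation between that block and $\bm T$ is $\Theta(1)$ and $\operatorname{Var}_{D_1}(h)$ is \emph{not} smaller than $\operatorname{Var}_{D_2}(q)$ by a factor $S$; if all weights are $\Theta(1)$ the contraction does hold, but the mechanism is the Gaussian noise operator bound $\operatorname{Var}(T_\rho f)\le \rho^2\operatorname{Var}(f)$, not a generic Efron--Stein or bounded-differences estimate, and you would need to supply this. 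There is also no need to ``iterate coordinate by coordinate''---one peel-off already produces a $[0,1]$-valued surrogate with the right mean---so the multi-invocation bookkeeping you anticipate is a sign the plan has drifted.

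The paper runs the reduction in the opposite direction, which is mean-preserving for free and makes the variance contraction a one-liner. Take $\bm T_{1:S}$ i.i.d.\ $\sim D_2$ (phantom samples; only their law is used) and observe $\overline{\bm T}=\frac1S\sum_{i=1}^S\bm T_i\sim D_1$ exactly. Define the surrogate query
\begin{align*}
Q(\bm t) &= \E\Big[\tfrac1S\textstyle\sum_{i=1}^S q(\bm T_i)\,\Big|\,\overline{\bm T}=\bm t\Big].
\end{align*}
This is $[0,1]$-valued, is computable without knowledge of $\E_D\bm T$ because $\overline{\bm T}$ is a sufficient statistic (Observation~\ref{obs: sufficiency}), satisfies $\E_{D_1}Q=\E_{D_2}q$ by the tower property, and satisfies $\operatorname{Var}_{D_1}Q\le\operatorname{Var}_{D_2}(q)/S$ because $\frac1S\sum_i q(\bm T_i)$ already has variance $\operatorname{Var}_{D_2}(q)/S$ by independence and conditioning only shrinks variance. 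A single call to Fact~\ref{scalar_mean_estimation_fact} with $\xi=(2n_2)^{-1}$ then gives the stated $n_2$ and query count. (Equivalently, $Q$ is $q$ convolved with fresh $\gauss{\bm 0}{(\sigma_2^2-\sigma_1^2)\bm I}$ noise---the noise operator $T_{1/\sqrt S}$ applied to $q$---which is the well-posed version of your ``forward'' idea.)
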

The proof relies on a conditioning argument to reduce the variance of a statistic while preserving its expectation. This is often called Rao-Blackwellization \citep{rao1992information,blackwell1947conditional} in statistics. 

\begin{proof}
Let $q :  \tensor{\R^d}{k} \rightarrow [0,1]$ be the query for the $\vstat{D_2}{n_2}$ oracle. We will construct a query $Q$ for the $\vstat{D_1}{n_1}$ oracle with the following properties:
\begin{enumerate}
    \item Unbiasedness: $\E_{D_1}Q(\bm T) \explain{}{=} \E_{D_2} q(\bm T)$.
    \item  Variance bound: $\mathsf{Var}_{D_1} Q(\bm T) \explain{}{\leq} {\mathsf{Var}_{D_2}q(\bm T)}/{S}$.
\end{enumerate}
Let us assume for the moment that such a query $Q$ can be constructed and complete the proof of the claim. Indeed, by Fact~\ref{scalar_mean_estimation_fact} one can compute an estimate $\hat{Q}$ by making at most $9\log(n_1 S)$ queries to a $\vstat{D_1}{n_1}$ oracle which has the following error guarantee (we set $\xi = (2n_2)^{-1}$):
\begin{align*}
    \left| \hat{Q} - \E_{D_1} Q(\bm T) \right| & \leq  8 \log(n_1) \cdot  \sqrt{\frac{\mathsf{Var}_{D_1}(Q(\bm T))}{n_1}} + \xi \\ &\leq 2 \max\left( 8 \log(n_1) \cdot  \sqrt{\frac{\mathsf{Var}_{D_1}(Q(\bm T))}{n_1}}, \xi \right) \\& \leq \max\left( \sqrt{\frac{\mathsf{Var}_{D_2}(q(\bm T))}{n_2}} , \frac{1}{n_2}\right).
\end{align*}
Since $\mathsf{Var}_{D_2} q(\bm T)  = \E_{D_2} q(\bm T)^2 - (\E_{D_2} q(\bm T))^2 \leq (\E_{D_2} q(\bm T))\cdot (\E_{D_2} q(\bm T) -1)$, $\hat{Q}$ functions as a valid $\vstat{D_2}{n_2}$ response, completing the simulation of a $\vstat{D_2}{n_2}$ oracle. 

In order to finish the proof, we need to describe the construction of the query $Q$ with Properties 1 and 2 from above. In order to describe the construction, it will be helpful to realize a sample from $D_1$ and a sample from $D_2$ in the same probability space. We use the following natural construction: Let $\bm T_{1:S} \explain{i.i.d.}{\sim} D_2$ and define $\overline{\bm T} = \frac{1}{S} \sum_{i=1}^S \bm T_i$. Note that $\overline{\bm T} \sim D_1$. Note that, in order to construct the query $Q$, we need to find a function $Q$ of $\overline{\bm T}$ with the properties:
\begin{enumerate}
    \item Unbiasedness: $\E[Q(\overline{\bm T})] \explain{}{=} \E[q(\bm T_1)]$,
    \item  Variance bound: $\mathsf{Var}[Q(\overline{\bm T})] \explain{}{\leq}{\mathsf{Var} [q(\bm T_1)}]/{S}$.
\end{enumerate}
Note that the function:
\begin{align*}
    \overline{q}(\bm T_{1}, \bm T_2 \cdots ,\bm T_S) & \explain{def}{=} \frac{1}{S} \sum_{i=1}^S q(\bm T_i).
\end{align*}
satisfies Properties 1 and 2. However it is not a function of $\overline{\bm T}$. In order to fix this, define:
\begin{align*}
    Q(\overline{\bm T}) \explain{def}{=} \E \left[ \frac{1}{S} \sum_{i=1}^S q(\bm T_i) \bigg| \overline{\bm T}\right].
\end{align*}
This function $Q$ satisfies Properties 1 and 2:
\begin{enumerate}
    \item By the tower property:
    \begin{align*}
        \E Q( \overline{ \bm T}) = \E\left[ \frac{1}{S} \sum_{i=1}^S q(\bm T_i) \right] = \E q(\bm T_1).
    \end{align*}
    \item Due to the fact that conditioning reduces variance (or Jensen's inequality):
    \begin{align*}
        \mathsf{Var}[Q(\overline{\bm T})] & \leq  \mathsf{Var}\left[ \frac{1}{S} \sum_{i=1}^S q(\bm T_i) \right] \\
        & = \frac{\mathsf{Var} [q(\bm T_1)]}{S}.
    \end{align*}
\end{enumerate}
Hence, we have constructed the desired function $Q$ and this concludes the proof of the observation. 
\end{proof}

As a consequence of Observation \ref{obs: simulation_feasible_side}, the SQ lower bounds proved for $\tpca{n}{d}{1}$ in Theorem \ref{main_result} can be lifted to lower bounds for $\tpca{n}{d}{\sigma^2}$ provided $\sigma^2 \geq \Omega_d(1)$. 

\begin{corollary} \label{corr_large_noise} Let $\sigma^2 \geq 1$. The following learning problems: 
	\begin{enumerate}
	    \item $\tpca{n}{d}{\sigma^2}$ testing with $\oddness = 0$ and sample size $n \ll \sigma^2 d^{\frac{k}{2}}$.
	    \item $\tpca{n}{d}{\sigma^2}$ estimation with $\oddness = 0$ and sample size $ n \ll \sigma^2 d^{\frac{k+2}{2}}$
	    \item $\tpca{n}{d}{\sigma^2}$ testing or estimation with $\oddness \geq 1$ and sample size $n \ll  \sigma^2 d^{\frac{k+\oddness}{2}}$.
	\end{enumerate} 
	cannot be solved in the SQ model with a query complexity which is polynomial in $d$.
Furthermore, there exist SQ algorithms that make polynomial in $d$ queries and solve:
\begin{enumerate}
    \item The Tensor PCA testing problem with $\oddness = 0$ provided $n \gg \sigma^2\sqrt{d^k}$.
    \item The Tensor PCA estimation problem with $\oddness = 0$ provided $n \gg \sigma^2\sqrt{d^{k+2}}$.
    \item The Tensor PCA testing and estimation problems with $\oddness \geq 1$ provided $n \gg \sigma^2\sqrt{d^{k + \oddness}}$.
\end{enumerate} 
\end{corollary}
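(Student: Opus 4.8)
The plan is to derive both halves of the corollary from their $\sigma^2=1$ counterparts, Theorems~\ref{main_result} and~\ref{upper_bound_thm}, by composing the oracle simulation of Observation~\ref{obs: simulation_feasible_side} with two routine reductions.

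\textbf{Lower bound.} I would first reduce to the case $\sigma^2\in\N$ by noise padding. Given a query $q:\tensor{\R^d}{k}\to[0,1]$ meant for a $\vstat{D(\sigma^2)}{n}$ oracle, replace it by $Q(\bm T):=\E_{\bm Z}[q(\bm T+\bm Z)]$, where $\bm Z$ has i.i.d.\ $\gauss{0}{\sigma^2-\lfloor\sigma^2\rfloor}$ entries. Since adding independent Gaussian noise turns a $D_{\bm V}(\lfloor\sigma^2\rfloor)$ sample into a $D_{\bm V}(\sigma^2)$ sample for every $\bm V$ (the case $\bm V=\bm 0$ included), we have $Q:\tensor{\R^d}{k}\to[0,1]$, $\E_{D(\lfloor\sigma^2\rfloor)}Q=\E_{D(\sigma^2)}q$, and $\operatorname{Var}_{D(\lfloor\sigma^2\rfloor)}Q\le\operatorname{Var}_{D(\sigma^2)}q$ (conditioning cannot increase variance); hence a $\vstat{D(\lfloor\sigma^2\rfloor)}{n}$ response to $Q$ is a valid $\vstat{D(\sigma^2)}{n}$ response to $q$, so any $\mathrm{poly}(d)$-query SQ solver for $\tpca{n}{d}{\sigma^2}$ yields one for $\tpca{n}{d}{\lfloor\sigma^2\rfloor}$ at the same $n$; as $\lfloor\sigma^2\rfloor\asymp\sigma^2$ for $\sigma^2\ge1$, it suffices to handle $\sigma^2=S\in\N$. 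Now suppose, for contradiction, that some algorithm solves one of the three problems with $\mathrm{poly}(d)$ queries at a sample size $n\ll S\,d^{a}$, where $a\in\{k/2,(k+2)/2,(k+\oddness)/2\}$ is the exponent for the case at hand. Invoke Observation~\ref{obs: simulation_feasible_side} with $\sigma_1^2=1$ and $\sigma_2^2=S$: pick $n_1$ with $n_1\ll d^{a}$ yet $n_2:=Sn_1/(256\log^2 n_1)\ge n$, which is possible because $n\ll S\,d^{a}$ gives $n\le c\,S\,d^{a-\delta}$ for some $\delta>0$ and so $n_1\asymp d^{a-\delta}\,\mathrm{polylog}(d)$ works, the polylog being absorbed by $\ll$. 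Simulating $\vstat{D(S)}{n_2}$ --- hence also a valid $\vstat{D(S)}{n}$ oracle --- from a $\vstat{D(1)}{n_1}$ oracle at $9\log(n_1S)$ queries per simulated query, and running the assumed solver on it, solves the corresponding $\tpca{n_1}{d}{1}$ problem (the null maps to the null, the signal $\bm V$ is preserved) with $\mathrm{poly}(d)\cdot 9\log(n_1S)=\mathrm{poly}(d)$ queries; for $d$ large this contradicts Theorem~\ref{main_result} applied with $n_1\le C_0 d^{a-\delta/2}$ and $L$ larger than the degree of that polynomial. The three cases differ only in which clause of Theorem~\ref{main_result} is cited.

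\textbf{Upper bound.} Here I would revisit the partial-trace procedures that prove Theorem~\ref{upper_bound_thm} (Appendix~\ref{optimal_SQ_appendix}): each invokes the scalar mean estimator of Fact~\ref{scalar_mean_estimation_fact} on a small number of statistics $q(\bm T)$ that are \emph{linear} in $\bm T$ (partial traces, single entries). For any such $q=\ip{\bm a}{\cdot}$ one has $\E_{D(\sigma^2)}q=\E_{D(1)}q$, $\operatorname{Var}_{D(\sigma^2)}q=\sigma^2\|\bm a\|^2=\sigma^2\operatorname{Var}_{D(1)}q$, and $\E_{D(\sigma^2)}q^2\le\sigma^2\|\bm a\|^2+(\E q)^2=:B$, a known bound; so Fact~\ref{scalar_mean_estimation_fact} (with $\xi$ set polynomially small in $1/d$) returns each estimate with error $O\!\big(\sigma\log(n)\sqrt{\operatorname{Var}_{D(1)}q/n}\big)+\xi$ using $O(\log(nB/\xi^2))=\mathrm{poly}(d)$ queries. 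Plugging these rescaled errors into the elementary correctness analyses of the $\sigma^2=1$ algorithms --- which only need the estimates accurate to within a fixed fraction of the relevant signal gap --- shows that the test or estimate succeeds as soon as $n$ exceeds $\sigma^2\log^2(n)$ times the $\sigma^2=1$ threshold, i.e.\ as soon as $n\gg\sigma^2\sqrt{d^k}$, $n\gg\sigma^2\sqrt{d^{k+2}}$, $n\gg\sigma^2\sqrt{d^{k+\oddness}}$ respectively (the $\log^2 n$ disappearing into $\gg$, since $\log n=d^{o(1)}$ in the range of interest).

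\textbf{Expected obstacle.} There is no conceptual obstacle --- the content is entirely in the bookkeeping. The two points to get right are that the polylogarithmic overheads (the $256\log^2 n_1$ of Observation~\ref{obs: simulation_feasible_side}, the $\log n$ of Fact~\ref{scalar_mean_estimation_fact}) are genuinely swallowed by the polynomial slack that $\ll$ and $\gg$ permit, and the treatment of non-integer $\sigma^2$: Observation~\ref{obs: simulation_feasible_side} needs an integer variance ratio, and one cannot simulate a \emph{less} noisy oracle from a more noisy one without knowing the signal, so the noise-padding reduction must be performed \emph{before} --- not after --- applying the simulation.
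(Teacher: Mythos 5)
Your proof follows the paper's own (two-sentence) argument: derive the lower bound by contradiction via the oracle simulation of Observation~\ref{obs: simulation_feasible_side}, and derive the upper bound by rescaling the analysis of the partial-trace procedures in Appendix~\ref{optimal_SQ_appendix}. The bookkeeping on the $\log^2 n_1$ and $\log(n_1 S)$ overheads is handled correctly, and your observation that the queries in the upper-bound procedures are linear in $\bm T$ (so that $\E_{D(\sigma^2)}q = \E_{D(1)}q$ while $\operatorname{Var}_{D(\sigma^2)}q = \sigma^2\operatorname{Var}_{D(1)}q$) is exactly the right way to make the paper's ``straightforward to extend'' claim precise.

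One genuine improvement in your write-up: Observation~\ref{obs: simulation_feasible_side} as stated requires the variance ratio $S = \sigma_2^2/\sigma_1^2$ to be a positive integer, whereas the corollary allows an arbitrary real $\sigma^2 \ge 1$ (indeed one possibly depending on $d$). The paper invokes the observation without addressing this, which is a small gap when $\sigma^2$ is not an integer and not a fixed constant. Your noise-padding reduction --- replace $q$ by $Q(\bm T) = \E_{\bm Z}\left[q(\bm T + \bm Z)\right]$ with $\bm Z$ having i.i.d.\ $\gauss{0}{\sigma^2 - \lfloor\sigma^2\rfloor}$ entries, so that a $\vstat{D(\lfloor\sigma^2\rfloor)}{n}$ answer to $Q$ is a legal $\vstat{D(\sigma^2)}{n}$ answer to $q$ --- fixes this cleanly, and you correctly note that it must be done \emph{before}, not after, the simulation. (The paper's Remark~\ref{corr_large_noise}+1 hints at an alternative fix, re-proving Theorem~\ref{main_result} for an arbitrary constant base variance, which covers fixed non-integer $\sigma^2$ but not $\sigma^2$ varying with $d$; your route is the more general one.) Otherwise your argument and the paper's coincide.
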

\begin{proof} The lower bounds follow by a proof by contradiction: if there is a polynomial query SQ algorithm which beats the claimed lower bounds for $\tpca{n}{d}{\sigma^2}$, using the simulation argument in Observation \ref{obs: simulation_feasible_side}, one can use it to solve $\tpca{n}{d}{1}$  with a sample complexity $n$ which beats the lower bounds in Theorem \ref{main_result}. For the upper bound, it is straightforward to extend the analysis of the optimal SQ procedures described in Section \ref{optimal_SQ_appendix} to arbitrary noise level $\sigma^2$.
\end{proof}

\begin{remark} One can show that Corollary \ref{corr_large_noise} holds for any $\sigma^2 \geq \Omega_d(1)$ (i.e., the constant $1$ is not special). This is because the proof of Theorem \ref{main_result} works for any constant $\sigma^2$ (not necessarily equal to $1$).  
\end{remark}

Observation \ref{obs: simulation_feasible_side} shows that an oracle with a larger sample size and a larger noise level is no stronger than an oracle with a smaller sample size and a proportionally smaller noise level. However the converse of this does not hold. 

\begin{observation} There is a constant $c>0$ small enough such that it is possible to solve symmetric $\tpca{n}{d}{\sigma^2}$ testing problem with $\sigma^2 = c/d$ in the SQ model with sample complexity $n = O_d(1)$ after making $O_d(1)$ queries.  
\end{observation}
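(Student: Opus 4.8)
The plan is to exhibit a single, fixed (non-adaptive) query that separates the null hypothesis from every alternative by a constant margin, and to note that a constant effective sample size suffices for $\vstat{D}{n}$ to resolve this separation. The point is that when $\sigma^2 = c/d$ with $c$ small, even a \emph{single} tensor sample $\bm T$ has small ``injective norm'' $\max_{\|\bm x\|=1}|\bm T(\bm x,\dotsc,\bm x)|$ under the null, but under the alternative this quantity is already $\approx 1$ when $\bm x$ is aligned with the planted vector. Crucially, the SQ model places no computational restriction on query functions, so we may use the (computationally expensive) query
$q(\bm T) = \Indicator{\max_{\|\bm x\|=1}|\bm T(\bm x,\bm x,\dotsc,\bm x)| \geq 1/2}$,
which takes values in $\{0,1\}\subseteq[0,1]$ and is therefore a legal input to $\vstat{D}{n}$.

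First, the alternative side: fix any $\bm v_0$ with $\|\bm v_0\|=\sqrt d$ and signal tensor $\bm v_0^{\otimes k}$, and evaluate the multilinear form at $\bm x = \bm v_0/\sqrt d$. Since $\bm T(\bm x,\dotsc,\bm x)$ is a linear functional of the Gaussian tensor $\bm T$, it is distributed as $\gauss{1}{\sigma^2\|\bm x\|^{2k}} = \gauss{1}{c/d}$ (the mean is $\langle \bm v_0,\bm x\rangle^k/d^{k/2} = 1$), so a Gaussian tail bound gives $\E_{D_{\bm V}} q(\bm T) \geq 1 - e^{-d/(8c)}$, uniformly over the symmetric alternative hypothesis. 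Second, the null side: under $D_0$ the tensor equals $\sigma$ times a standard Gaussian tensor $\bm G$, so $\max_{\|\bm x\|=1}|\bm T(\bm x,\dotsc,\bm x)| = \sigma\cdot\max_{\|\bm x\|=1}|\bm G(\bm x,\dotsc,\bm x)|$. A standard $\epsilon$-net argument over $S^{d-1}$ gives $\E\max_{\|\bm x\|=1}|\bm G(\bm x,\dotsc,\bm x)| \leq C_k\sqrt d$ for a constant $C_k$ depending only on $k$, and this functional is $1$-Lipschitz in the Frobenius norm of $\bm G$ (the injective norm is at most the Frobenius norm), so Gaussian concentration yields $\P(\max_{\|\bm x\|=1}|\bm G(\bm x,\dotsc,\bm x)| \geq C_k\sqrt d + t) \leq e^{-t^2/2}$. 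Choosing $c \leq 1/(16C_k^2)$ makes $\sigma\cdot C_k\sqrt d = C_k\sqrt c \leq 1/4$, hence $\E_{D_0} q(\bm T) = \P\big(\sigma\max_{\|\bm x\|=1}|\bm G(\bm x,\dotsc,\bm x)| \geq 1/2\big) \leq e^{-d/(32c)}$.

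Third, the algorithm: run the single query $q$ against $\vstat{D}{n}$ with $n = 64$, and declare ``$D \in \mathcal{D}(\pi)$'' iff the response $\hatE_D q(\bm T)$ is at least $1/2$. The VSTAT guarantee~\eqref{VSTAT_error_guarantee} bounds the oracle error by $\max\!\big(1/n, \sqrt{\E_D q(\bm T)(1-\E_D q(\bm T))/n}\big) \leq 1/16$ for $n=64$; combined with the two expectation bounds above (which give $\E_{D_0} q(\bm T) \leq 1/100$ and $\E_{D_{\bm V}} q(\bm T) \geq 99/100$ once $d$ exceeds a constant depending only on $k$), the thresholded response is correct in both cases. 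This uses exactly one query and $n=64 = O_d(1)$ samples, proving the observation.

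The only non-elementary ingredient — and the one place to be slightly careful — is the bound $\E\max_{\|\bm x\|=1}|\bm G(\bm x,\dotsc,\bm x)| \leq C_k\sqrt d$ together with its Lipschitz concentration; this is the standard estimate for the injective norm of a Gaussian tensor (a $1/2$-net of $S^{d-1}$ has size at most $5^d$, the multilinear form on the net dominates the supremum up to a $k$-dependent factor, and each net value is $\gauss{0}{1}$). Everything else is bookkeeping: picking $c$ small enough relative to $C_k$ and choosing the ``$d$ large enough'' threshold so the three exponential bounds chain together. There is no real obstacle here; the content of the observation is the qualitative contrast with Theorem~\ref{main_result} and Corollary~\ref{corr_large_noise} — whereas those govern the regime $\sigma^2 \geq \Omega_d(1)$, at noise level $\sigma^2 = c/d$ the per-sample injective norm already carries a constant signal-to-noise ratio that a single bounded SQ query can detect.
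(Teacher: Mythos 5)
Your proof is correct and is essentially the same as the paper's: both hinge on the injective-norm statistic $\max_{\|\bm x\|=1}\bm T(\bm x,\dotsc,\bm x)$ as the single separating query, exploiting the fact that at noise level $\sigma^2 = c/d$ this quantity is $O(\sqrt{c})$ under the null but $\approx 1$ under every alternative. The only differences are in technical execution: the paper keeps the real-valued query $q(\bm T)=\max_{\|\bm x\|=1}\bm T(\bm x,\dotsc,\bm x)$ and invokes Fact~\ref{scalar_mean_estimation_fact} to estimate its mean, citing Lemmas~2.1--2.2 of \citet{richard2014statistical} for the mean separation and the $\mathsf{Var}_D q(\bm T) \leq C/d$ bound; you instead threshold the statistic into a $\{0,1\}$-valued query so that the VSTAT guarantee of Eq.~\eqref{VSTAT_error_guarantee} applies directly, and you re-derive the needed concentration in-line (Gaussian tail for the alternative, and mean plus Lipschitz concentration for the null). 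Both routes are valid; yours is marginally more self-contained at the cost of reproving the injective-norm estimate that the paper imports from \citeauthor{richard2014statistical}, while the paper's route is more in keeping with the Fact~\ref{scalar_mean_estimation_fact} machinery it uses for the other upper bounds in Appendix~\ref{optimal_SQ_appendix}.
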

\begin{proof}
Consider the query $q$:
\begin{align*}
    q(\bm T) & = \max_{\|\bm x \| = 1} \sum_{i_1,i_2 \dots i_k} T_{i_1,i_2 \dots i_k} x_{i_1} x_{i_2} \dots x_{i_k}. 
\end{align*}
Lemma 2.1 in \citet{richard2014statistical} shows that if $\sigma^2 = c/d$ for a small enough constant $c$, then  we have:
\begin{align*}
    |\E_D q(\bm T) - \E_{D_0(\sigma^2)} q(\bm T) | & \geq 0.5 \; \forall \; D \;  \in \; \mathcal{D}(\sigma_2).
\end{align*}
Furthermore Lemma 2.1 and 2.2 of \citeauthor{richard2014statistical} show that:
\begin{align*}
    \mathsf{Var}_{D} q(\bm T) & \leq \frac{C}{d}, \;  \forall \; D \;  \in \; \mathcal{D}(\sigma_2) \cup \{D_0(\sigma^2)\}.
\end{align*}
for some constant $C>0$. Consequently, by Fact \ref{scalar_mean_estimation_fact} we can obtain an estimate $\hatE_D q(\bm T)$ such that $|\hatE_D q(\bm T) - \E_D q(\bm T)| \leq 0.1$ after making $O_d(1)$ queries to a $\vstat{D}{n}$ oracle with $n = O_d(1)$ and hence solve the testing problem. 
\end{proof}

The above observation shows that the assumption that $\sigma_2^2 \geq 1$ cannot be completely removed Corollary \ref{corr_large_noise}. Note that $n \geq \Omega_d(1)$ is information theoretic requirement on the sample size to solve $\tpca{n}{d}{\sigma^2}$ testing problem with $\sigma^2 = \Theta_d(1/d)$ and no known polynomial-time estimator achieves this sample complexity. Hence when $\sigma^2 = \Theta_d(1/d)$, SQ algorithms are so powerful that they completely fail to capture the conjectured computational-statistical gap and it is, in fact, possible to implement computationally intractable estimators in the SQ model. 

The discussion in this section suggests that there is a possibility that for some well chosen setting of $1/d \ll \sigma^2 \ll 1$, the SQ lower bound may match the conjectured statistical-computational gap for all variants of Tensor PCA. We have not pursued this direction in our work because of the following reasons:
\begin{enumerate}
    \item We believe that the setting $\sigma^2 = \Theta_d(1)$ is the most natural setting to study Tensor PCA in the SQ model. This is because Tensor PCA was intended to be a stylized setup to understand tensor based methods to fit latent variable models \citep{richard2014statistical}. For example, consider estimating parameter $\bm \mu$  given data $$\bm x_{1:n} \explain{i.i.d}{\sim} 0.5 \mathcal{N}(-\bm \mu,\sigma^2\bm I_d) + 0.5 \mathcal{N}(\bm \mu,\sigma^2\bm I_d).$$ In these applications one observes several i.i.d. samples such that the signal-to-noise ratio per sample, measured by $\|\bm \mu\|^2/\sigma^2$ is $\Theta_d(1)$ and a sample size of  $n \asymp d$ (the dimension of the parameter) is information theoretically necessary and sufficient. In Tensor PCA since the signal tensor has norm $\|\E \bm T \| = 1$, this suggests the natural setting of $\sigma^2 = \Theta_d(1)$ which is why we chose to study the $\tpca{n}{d}{1}$ formulation. In contrast $\tpca{n}{d}{1/d}$ seems very different from the situation arising in applications since the signal-to-noise ratio per sample diverges with $d$.
    \item Setting $\sigma^2$ in a post-hoc manner to reproduce a widely believed computational-statistical gap does not seem like a satisfactory fix to the SQ model since this doesn't provide a framework to \emph{predict} computational-statistical gaps in new problems. 
\end{enumerate}
\section{Conclusions and Future Work}
In this paper, we studied the Tensor PCA problem in the Statistical Query model. We found that SQ algorithms not only fail to solve Tensor PCA in the conjectured hard phase, but they also have a strictly sub-optimal complexity compared to some polynomial time estimators such as the Richard-Montanari spectral estimator. Our analysis revealed that the optimal sample complexity in the SQ model depends on whether $\E \bm T$ is symmetric or not. Furthermore, we isolated a sample size regime in which only testing, but not estimation, is possible for the symmetric and even-order Tensor PCA problem in the SQ model.

\change{Our results show that the statistical query framework leads to an incorrect and pessimistic prediction of the statistical-computation gap in Tensor PCA. An interesting problem for future work is to address this limitation of the statistical query framework. \citet{feldman2018complexity} found that the SQ framework leads to a similar pessimistic prediction of the statistical-computational gap in planted constraint satisfaction problems (CSPs). To address this, \citeauthor{feldman2018complexity} proposed a hierarchical generalization of the $\vstat{}{n}$ oracle for satisfiability problems known as the $\mathsf{MVSTAT}(n,\ell)$ oracle where $\ell$ is a parameter controlling the strength of the oracle. The oracle is designed such that, for a well-chosen value of $\ell$, it is possible to implement spectral estimators that achieve the best-known performance among known polynomial-time estimators. An interesting problem for future work is to identify a similar generalization of the  $\vstat{}{n}$ oracle for Tensor PCA. It would be exciting if this generalization is not tailored to a particular inference problem and works for a broad class of inference problems. Another desirable outcome is that this generalization is parameter-free, i.e., does not require setting parameters governing the strength of the oracle in a post-hoc manner to reproduce a widely believed computational-statistical gap. A modification to the SQ framework with these two properties would enable us to \emph{predict} computational-statistical gaps in new problems rather than reproduce them. The recent work of \citet{brennan2020statistical} may provide hints towards identifying the correct modification to the SQ framework. This work shows that under some weak conditions, lower bounds for low-degree polynomial procedures (which are known to predict computational-statistical gaps correctly) are equivalent to a \emph{sufficient criterion} for SQ lower bounds, namely the statistical dimension criterion of \citet{feldman2017statistical}. An interesting problem suggested by their work is to identify a relaxation of the current adversarial noise model used in the SQ framework under which the statistical dimension criterion of \citet{feldman2017statistical} is a \emph{necessary and sufficient} condition for SQ lower bounds. Such a modification to the SQ model would make SQ lower bounds equivalent to low-degree lower bounds, which are known to reliably predict the computational-statistical gap in a wide range of inference problems.} 

\section*{Acknowledgements}
The authors would like to thank Tselil Schramm and Alex Wein for helpful discussions. DH was partially supported by NSF awards CCF-1740833, DMR-1534910, and IIS-1563785, a Google Faculty Research Award, and a Sloan Fellowship.


\appendix

\section{Framework for Statistical Query Lower Bounds} \label{framework_appendix}
In this section, we review a few standard definitions and results used to prove SQ Lower bounds.  First we review the notion of statistical dimension defined by \citet{feldman2018complexity}.
\begin{definition}[Statistical Dimension; \citealp{feldman2018complexity}] \label{sdn_definition} Let $\epsilon > 0$ be an arbitrary tolerance parameter. Let $D_\star$ be a fixed reference distribution and $\mathcal{H}$ be a finite collection of distributions such that $D_\star \not\in \mathcal{H}$ The statistical dimension at tolerance $\epsilon$ for $D_\star$ v.s.~$\mathcal{H}$ denoted by $\sdn[D_\star]{\epsilon}[\mathcal{H}]$ is the largest natural number $m$  with the property: for any subset $\mathcal{S} \subset \mathcal{H}$ with size at least $|\mathcal{S}| \geq |\mathcal{H}|/m$, we have,
\begin{align}
\label{sdn_implication}
 \frac{1}{|\mathcal{S}|} \sum_{D\in \mathcal{S}} \left| \E_{D} q(\bm T) - \E_{D_\star} q(\bm T) \right| \leq \epsilon \; \forall \; q: \tensor{\R^d}{k} \mapsto \R, \;  \E_{D_\star} q^2(\bm T)  = 1.
\end{align} 
\end{definition}
In our lower bounds, we will consider the following natural candidate for $\mathcal{H}$:
\begin{align*}
  \mathcal{H}(\pi) & \explain{def}{=} \{D_{\bm v_{\pi(1)} \otimes \bm v_{\pi(2)} \cdots \bm v_{\pi(k)}}: \bm v_{1:K} \in \{\pm 1\}^d \}.
\end{align*}
Our  lower bounds rely on the following result  which shows that the statistical dimension  is a lower bound on the query complexity of any SQ algorithm. 
\begin{proposition}[\citealp{feldman2018complexity,feldman2017general}] \label{feldmans_sq_lb} Consider the Tensor PCA problem with labelling function $\pi$. Let $\mathcal{H}$ be any arbitrary finite subset of $\mathcal{D}(\pi)$. Any  algorithm that solves the testing problem must make at least $\sdn[D_0]{(3n)^{-\frac{1}{2}}}[\mathcal{H}(\pi)]$ queries. Furthermore, any SQ algorithm that solves the estimation problem must make at least $0.5 \cdot \sdn[D_\star]{(3n)^{-\frac{1}{2}}}[\mathcal{H}(\pi)]$ queries, where $D_\star$ is an arbitrary reference measure (not necessarily $D_0$).
\end{proposition}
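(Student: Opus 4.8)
The plan is to prove both parts by the standard adversary (decision-tree) argument for statistical-query lower bounds, the estimation bound following the same template together with an extra clustering step. I take $\mathcal A$ to be deterministic; reducing a randomized SQ algorithm to this case is routine and goes exactly as in \citet{feldman2018complexity}. A deterministic $\mathcal A$ interacting with a fixed oracle then traces a single root-to-leaf path in a tree of depth at most $B$.

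\emph{Testing.} Let the \emph{reference oracle} answer every query $q:\tensor{\R^d}{k}\to[0,1]$ by the exact value $\E_{D_0}q(\bm T)$; this is a legitimate $\vstat{D_0}{n}$ oracle, so on a true $D_0$ input $\mathcal A$ follows a fixed path $q_1,\dots,q_B$ and must output ``$D=D_0$''. For a fixed query $q$ (assume $\operatorname{Var}_{D_0}q>0$, the constant case being trivial) let $\mathcal K_q\subseteq\mathcal H(\pi)$ collect those $D$ for which $\E_{D_0}q$ is \emph{not} a valid $\vstat{D}{n}$ response. Rescaling to $\tilde q=(q-\E_{D_0}q)/\sqrt{\operatorname{Var}_{D_0}q}$ so that $\E_{D_0}\tilde q^2=1$, invalidity of $\E_{D_0}q$ for $D$ forces $|\E_D\tilde q-\E_{D_0}\tilde q|>(3n)^{-1/2}$; the factor $3$ is precisely what mediates between the $[0,1]$-normalization built into the VSTAT tolerance and the $L^2(D_0)$-normalization in Definition~\ref{sdn_definition}, and is accounted for as in \citet{feldman2018complexity}. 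Applying Definition~\ref{sdn_definition} to the large-value set $\{D:|\E_D\tilde q-\E_{D_0}\tilde q|>(3n)^{-1/2}\}$, whose average discrepancy exceeds the tolerance, shows $|\mathcal K_q|<|\mathcal H(\pi)|/m$ with $m:=\sdn[D_0]{(3n)^{-1/2}}[\mathcal H(\pi)]$. If $B<m$, then $\big|\bigcup_{i=1}^B\mathcal K_{q_i}\big|<|\mathcal H(\pi)|$, so some $D^\dagger\in\mathcal H(\pi)$ avoids every $\mathcal K_{q_i}$; the reference oracle is then a valid $\vstat{D^\dagger}{n}$ oracle on the whole path, so feeding it to $\mathcal A$ on the instance $D^\dagger$ reproduces that path and the output ``$D=D_0$'' --- impossible, as $D^\dagger\in\mathcal D(\pi)$. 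Hence $B\ge m$.

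\emph{Estimation.} Run $\mathcal A$ against the reference oracle $q\mapsto\E_{D_\star}q(\bm T)$ for the given measure $D_\star$; it traces a fixed path $q_1,\dots,q_B$ and returns a fixed $\hat{\bm V}$. As above, the killed set $\mathcal K:=\bigcup_{i=1}^B\mathcal K_{q_i}\subseteq\mathcal H(\pi)$ (with $\mathcal K_{q_i}$ now defined relative to $D_\star$) obeys $|\mathcal K|<B\,|\mathcal H(\pi)|/m$, $m:=\sdn[D_\star]{(3n)^{-1/2}}[\mathcal H(\pi)]$, and for every $D\in\mathcal H(\pi)\setminus\mathcal K$ the reference oracle is a valid $\vstat{D}{n}$ oracle, so $\mathcal A$ returns this same $\hat{\bm V}$ and correctness forces $\|\hat{\bm V}-\E_D\bm T\|\le\tfrac14$. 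Hence the unit-norm signal tensors $\{\E_D\bm T:D\in\mathcal H(\pi)\setminus\mathcal K\}$ all sit in the radius-$\tfrac14$ ball about $\hat{\bm V}$, so are pairwise within $\tfrac12$. The claim then reduces to a clustering lemma: for $d$ large, any radius-$\tfrac14$ ball contains at most $|\mathcal H(\pi)|/2$ of the signal tensors $d^{-k/2}\bm v_{\pi(1)}\otimes\cdots\otimes\bm v_{\pi(k)}$, $\bm v_{1:K}\in\{\pm1\}^d$; granting it, $|\mathcal H(\pi)\setminus\mathcal K|\le|\mathcal H(\pi)|/2$, hence $|\mathcal K|\ge|\mathcal H(\pi)|/2$ and $B\ge m/2$.

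\emph{The clustering lemma, and the main obstacle.} Two unit-norm signal tensors are within $\tfrac12$ iff $\prod_{i=1}^K\langle\bm v_i,\bm v_i'\rangle^{s_i}\ge\tfrac78 d^k$. Taking absolute values and using $\sum_i s_i=k$ and $|\langle\bm v_i,\bm v_i'\rangle|\le d$, each factor obeys $(|\langle\bm v_i,\bm v_i'\rangle|/d)^{s_i}\ge\tfrac78$, so $|\langle\bm v_i,\bm v_i'\rangle|\ge\tfrac78 d$, i.e.\ each $\bm v_i'$ lies within Hamming distance $d/16$ of $\bm v_i$ or of $-\bm v_i$. Fixing any ball member as pivot and counting (with $H$ the binary entropy), a ball contains at most $\big(2\cdot 2^{H(1/16)d+O(\log d)}\big)^K=2^{(H(1/16)+o(1))dK}$ signal tensors, while $|\mathcal H(\pi)|\asymp 2^{dK}$ (the rank-one decomposition being unique up to signs); since $H(1/16)<1$ this is below $|\mathcal H(\pi)|/2$ once $d$ is large. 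I expect the two fiddly-but-standard pieces --- the reduction to deterministic algorithms and the exact accounting relating VSTAT validity to the $\mathsf{SDN}$ tolerance (the source of the factor $3$) --- to go through verbatim as in \citet{feldman2018complexity}; the genuinely new ingredient is the clustering lemma, where the care needed is in handling the $\pm$ sign ambiguities among the $\bm v_i$ and the tensor coincidences they create, so that the count remains a strict minority, which is what yields the stated factor $\tfrac12$.
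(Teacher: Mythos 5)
Your proof is correct and reaches the same bounds; for the testing part it is essentially the Feldman et al.\ argument that the paper simply cites, but for the estimation part you take a genuinely different route in the one step the paper fills in itself. Where the paper's proof (Appendix~\ref{lb_on_sdn_appendix}) establishes its ``Claim~2'' by building the auxiliary graph $G_0$ on all tuples $(\bm v_1,\dots,\bm v_K)$ with edges between tuples whose signal tensors are $\geq 1$ apart, arguing $G_0$ is $r$-regular, and counting removed edges to show that at least one edge survives $t < B/2$ queries --- thereby exhibiting two well-separated survivors $D_1,D_2$ that cannot both be within $\tfrac14$ of $\hat{\bm V}$ --- you instead show directly that the survivor set $\mathcal H(\pi)\setminus\mathcal K$ is a strict minority: any set of signal tensors in a radius-$\tfrac14$ ball around $\hat{\bm V}$ is pairwise within $\tfrac12$, hence, after pivoting at a ball member, each $\bm v_i'$ is Hamming-close to $\pm\bm v_i$, and the number of such tuples is $2^{(H(1/16)+o(1))dK} \ll 2^{dK}$. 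Both arguments use the same Claim~1-type bound $|\mathcal K_{q_i}| < |\mathcal H(\pi)|/m$ and both close by forcing $|\mathcal K|\geq|\mathcal H(\pi)|/2$ and hence $B\geq m/2$; the difference is a ball-volume count in place of a regular-graph edge count. Your version avoids having to construct and verify regularity of $G_0$ (which the paper treats somewhat informally), at the price of needing the explicit entropy bound $H(1/16)<1$ and the bookkeeping around the $\prod_i\epsilon_i^{s_i}=1$ sign coincidences that you correctly flag; the paper's graph argument sidesteps the entropy calculation but leans on the same sign-ambiguity issue implicitly through the choice of vertex set. Your accounting of the factor $3$ and the reduction to deterministic algorithms are handled by citation, exactly as the paper does.
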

The claim of the above proposition for the testing problem is due to \citet[Theorem 7.1]{feldman2018complexity}.  \change{The claim regarding the estimation problem is due to \citet[Theorem 4.2]{feldman2017general}. A proof is provided in Appendix \ref{lb_on_sdn_appendix} for completeness.}

Note in order to obtain lower bounds for the estimation problem  we are free to choose $D_\star$. The natural choice of $D_\star = D_0$ will be sufficient to obtain tight lower bounds for most cases. For the remaining cases we will choose $D_\star = \overline{D} (\pi)$ which is a natural `average' of all distributions in $\mathcal{H}(\pi)$.
\begin{align*}
    \overline{D}(\pi) & \explain{def}{=} D_{\overline{\bm V}(\pi)}, \; \overline{\bm V}(\pi) = \E_{\bm v_1, \bm v_2 \cdots, \bm v_K \explain{i.i.d.}{\sim} \unif{\{\pm 1\}^d}} \left[ \bm v_{\pi(1)} \otimes \bm v_{\pi(2)} \cdots \bm v_{\pi(k)}\right].
\end{align*}

The following lemma presents a lower bound on the statistical dimension. Similar results are implicit in various previous works on SQ lower bounds \citep{feldman2018complexity}. 
\begin{lemma} \label{lb_on_sdn} For any $\epsilon > 0 $ and any $D_\star \in \{D_0, \overline{D}(\pi)\}$, 
	\begin{align*}
	\sdn[D_\star]{\epsilon}[\mathcal{H}(\pi)] & \geq \frac{\epsilon}{4} \cdot \left( \sup_{q: \E_{D_\star} q^2(\bm T) = 1} \P_{D \sim \unif{\mathcal H(\pi)}} \left[ \left| \E_{D} q(\bm T) - \E_{D_\star} q(\bm T) \right| > \frac{\epsilon}{2}\right] \right)^{-1}.
	\end{align*}
\end{lemma}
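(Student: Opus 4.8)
The plan is to check that the defining property of the statistical dimension (Eq.~\ref{sdn_implication}) holds for every natural number $m$ with $m\le \epsilon/(4\delta)$, where
\[
\delta\;\explain{def}{=}\;\sup_{q:\ \E_{D_\star}q^2(\bm T)=1}\ \P_{D\sim\unif{\mathcal{H}(\pi)}}\!\left[\,\left|\E_D q(\bm T)-\E_{D_\star}q(\bm T)\right|>\tfrac{\epsilon}{2}\,\right]
\]
is the supremum appearing in the lemma. Since $\sdn[D_\star]{\epsilon}[\mathcal{H}(\pi)]$ is the largest $m$ satisfying that property, this yields the claim. (If $\delta=0$, every normalized query obeys $|\E_D q-\E_{D_\star}q|\le\epsilon/2$ for all $D\in\mathcal{H}(\pi)$, so Eq.~\ref{sdn_implication} holds for all $m$ and the bound is vacuous.)

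The key ingredient is a \emph{uniform} bound on the separation achievable by a single normalized query: for all $D\in\mathcal{H}(\pi)$, all $D_\star\in\{D_0,\overline{D}(\pi)\}$, and all $q$ with $\E_{D_\star}q^2(\bm T)=1$,
\[
\left|\E_D q(\bm T)-\E_{D_\star}q(\bm T)\right|=\left|\,\langle q,\ \tfrac{\mathrm{d}D}{\mathrm{d}D_\star}-1\rangle_{D_\star}\,\right|\le\sqrt{\E_{D_\star}q^2(\bm T)}\,\sqrt{\chi^2(D\,\|\,D_\star)}=\sqrt{\chi^2(D\,\|\,D_\star)}.
\]
Both $D$ and $D_\star$ are laws of a \emph{single} Gaussian tensor with unit-variance coordinates, so $\chi^2(D\,\|\,D_\star)=\exp\!\left(\|\E_D\bm T-\E_{D_\star}\bm T\|^2\right)-1$. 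For $D_\star=D_0$ this equals $e^{\|\E_D\bm T\|^2}-1=e-1$, since $\|\E_D\bm T\|=1$. For $D_\star=\overline{D}(\pi)$ I would use the structure of $\overline{\bm V}(\pi)$: its $(j_1,\dots,j_k)$ entry is nonzero precisely when, within each label class, every index value is repeated an even number of times, and on exactly those entries each $\bm V=\bm v_{\pi(1)}\otimes\cdots\otimes\bm v_{\pi(k)}$ with $\bm v_i\in\{\pm1\}^d$ equals a product of even powers of $\pm1$'s, i.e.\ $1$. Hence $\overline{\bm V}(\pi)$ is a $0/1$ tensor with $\langle\bm V,\overline{\bm V}(\pi)\rangle=\|\overline{\bm V}(\pi)\|^2$, so $\|\bm V-\overline{\bm V}(\pi)\|^2=\|\bm V\|^2-\|\overline{\bm V}(\pi)\|^2\le d^k$ and again $\chi^2(D\,\|\,\overline{D}(\pi))\le e-1$. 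In all cases $\left|\E_D q-\E_{D_\star}q\right|\le\sqrt{e-1}<2$.

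With this uniform bound the lemma follows by a one-line averaging argument. Fix $m\le \epsilon/(4\delta)$, a set $\mathcal S\subseteq\mathcal{H}(\pi)$ with $|\mathcal S|\ge|\mathcal{H}(\pi)|/m$, and a query $q$ with $\E_{D_\star}q^2(\bm T)=1$. Partition $\mathcal S=\mathcal S_{\le}\cup\mathcal S_{>}$ according to whether $|\E_D q-\E_{D_\star}q|$ is $\le\epsilon/2$ or $>\epsilon/2$. By the definition of $\delta$, $|\mathcal S_{>}|\le\bigl|\{D\in\mathcal{H}(\pi):|\E_D q-\E_{D_\star}q|>\epsilon/2\}\bigr|\le\delta\,|\mathcal{H}(\pi)|$; terms over $\mathcal S_{\le}$ are $\le\epsilon/2$ and terms over $\mathcal S_{>}$ are $<2$. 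Therefore
\[
\frac{1}{|\mathcal S|}\sum_{D\in\mathcal S}\left|\E_D q(\bm T)-\E_{D_\star}q(\bm T)\right|\le\frac{\epsilon}{2}+\frac{2\,\delta\,|\mathcal{H}(\pi)|}{|\mathcal S|}\le\frac{\epsilon}{2}+2\,\delta\,m\le\epsilon,
\]
which is exactly Eq.~\ref{sdn_implication} for this $m$. As it holds for every normalized $q$ and every natural $m\le\epsilon/(4\delta)$, we conclude $\sdn[D_\star]{\epsilon}[\mathcal{H}(\pi)]\ge\epsilon/(4\delta)$; the strict inequality $\sqrt{e-1}<2$ leaves slack to absorb the rounding of $\epsilon/(4\delta)$ to an integer, and in every application of the lemma $\epsilon/(4\delta)$ is polynomially large in $d$, so this point is immaterial.

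I expect the only step that is not entirely mechanical to be the bound $\chi^2(D\,\|\,\overline{D}(\pi))\le e-1$, since it rests on the combinatorial description of the average tensor $\overline{\bm V}(\pi)$; the Cauchy--Schwarz estimate and the averaging are routine. The one conceptual remark worth stressing is that the ``heavy'' queries in $\mathcal S_{>}$ must be handled through this uniform per-query bound rather than through a second-moment (Cauchy--Schwarz over $\mathcal S$) estimate — the latter would only produce a $\delta^{-1/2}$ dependence, whereas the sharp statement needs $\delta^{-1}$.
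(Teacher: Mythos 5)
Your proof is correct and follows essentially the same route as the paper's: the Cauchy--Schwarz bound $|\E_D q - \E_{D_\star} q| \le \sqrt{\chi^2(D\,\|\,D_\star)} \le \sqrt{e-1} < 2$ for the heavy terms, combined with the first-moment averaging that uses $|\mathcal{S}| \ge |\mathcal{H}(\pi)|/m$ to cap the heavy fraction at $m\delta \le \epsilon/4$. The one place where you go beyond the paper is the $D_\star = \overline{D}(\pi)$ case, which the paper dispatches with the phrase ``an exactly analogous calculation''; you actually supply the combinatorics showing $\overline{\bm V}(\pi)$ is a $0/1$ tensor with $\langle \bm V, \overline{\bm V}(\pi)\rangle = \|\overline{\bm V}(\pi)\|^2$ so that $\|\bm V - \overline{\bm V}(\pi)\|^2 \le d^k$, which is a genuine and correct filling-in of a gap the paper elides. (Both you and the paper gloss over the fact that $\epsilon/(4\delta)$ need not be an integer while $\sdn$ is defined as a largest natural number; you at least flag this, and in all uses of the lemma the quantity is polynomially large so the rounding is harmless.)
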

The proof of the above lemma can also be found in Appendix \ref{lb_on_sdn_appendix}. 

\section{Proof of Proposition \ref{fourier_analytic_prop}}
\label{fourier_analytic_prop_proof}
As a consequence of Proposition \ref{feldmans_sq_lb} and Lemma \ref{lb_on_sdn} from the previous section, in order to prove a lower bound on the number of queries required to solve Tensor PCA with $n$ samples, we need to prove an upper bound on the quantity:
\begin{align*}
    \sup_{q: \E_{D_\star} q^2(\bm T) = 1} \P_{D \sim \unif{\mathcal H(\pi)}} \left[ \left| \E_{D} q(\bm T) - \E_{D_\star} q(\bm T) \right| > \frac{1}{2\sqrt{3n}}\right]. 
\end{align*}
This section is devoted to the proof of Proposition \ref{fourier_analytic_prop} which provides an analysis of the above quantity.

\begin{proof}
	Consider any query $q$ such that $\E_{D_0} q^2(\bm T)  = 1$.
	Observe that we have the following distributional equivalence:
	\begin{align*}
	D \sim \unif{\mathcal H(\pi)} & \explain{d}{=} D_{\bm v_{\pi(1)} \dots \otimes \bm v_{\pi(k)}}, \; \bm v_i \explain{i.i.d.}{\sim} \unif{\{\pm 1\}^d}.
	\end{align*}
	Hence, 
	\begin{align*}
	&\P_{D \sim \unif{\mathcal H}} \left[ \left| \E_{D} q(\bm T) - \E_{D_0} q(\bm T) \right| > \epsilon \right]  = \\ & \hspace{5cm} \P_{\bm v_{1:K} \sim \unif{\{\pm 1\}^d}} \left[ \left| \E_{D_{\bm v_{\pi(1)} \dots \otimes \bm v_{\pi(k)}}} q(\bm T) - \E_{D_0} q(\bm T) \right| > \epsilon \right].
	\end{align*}
	In the remainder of the proof, for the ease of notation, we will shorthand $\E_{\bm v_{1:K} \sim \unif{\{\pm 1\}^d}}$ and $\P_{\bm v_{1:K} \sim \unif{\{\pm 1\}^d}}$ simply as $\E$ and $\P$.
	
	\paragraph{Step 1, Markov Inequality:} By Markov's Inequality, we have, for any $u \in \N$, 
	\begin{align}
	\label{markov_inequality}
	\P\left[ \left| \E_{D_{\bm v_{\pi(1)} \dots \otimes \bm v_{\pi(k)}}} q(\bm T) - \E_{D_0} q(\bm T) \right| > \epsilon \right] & \leq \frac{\E \left| \E_{D_{\bm v_{\pi(1)} \dots \otimes \bm v_{\pi(k)}}} q(\bm T) - \E_{D_0} q(\bm T) \right|^{u}}{\epsilon^u}.
	\end{align}
	
	\paragraph{Step 2, Fourier Expansion of $\E_{D_{\bm v_{\pi(1)} \dots \otimes \bm v_{\pi(k)}}} q(\bm T) - \E_{D_0} q(\bm T)$:} Since the second moment of $q$ is bounded, we can expand the query $q$ in terms of the multivariate Hermite polynomials:
	\begin{align*}
	q(\bm T) & = \sum_{\bm c \in \tensor{\W^d}{k}} \hat{q}(\bm c) H_{\bm c}(\bm T).
	\end{align*} 
	We refer the reader to Appendix \ref{fourier_gauss_appendix} for definitions and basic properties of this orthonormal basis. By Parseval's identity, we have,
	\begin{align*}
	\E_{D_0} q^2(\bm T) = 1 & \Leftrightarrow  \sum_{\bm c \in \tensor{\W^d}{k}} \hat q^2(\bm c) = 1.
	\end{align*}
	The rational of expanding the query in this particular basis is the following elegant property (see Fact \ref{hermite_special_property}, Appendix \ref{fourier_gauss_appendix})\footnote{This property is also useful in the analysis of Tensor PCA via the low degree method \citep{hopkins2017power,hopkins2018statistical,kunisky2019notes}, a heuristic technique motivated by Sum-of-Squares hierarchy to predict computational-statistical gaps.}:
	\begin{align*}
	\E_{D_{\bm v_{\pi(1)} \dots \otimes \bm v_{\pi(k)}}} H_{\bm c}(\bm T) & =  \prod_{j_1, j_2 \dots j_k \in [d]}  \left( \frac{ (v_{\pi(1)})_{j_1} (v_{\pi(2)})_{j_2} \dots (v_{\pi(k)})_{j_k}}{d^{\frac{k}{2}}} \right)^{c_{j_1,j_2 \dots j_k}} \cdot \frac{1}{\sqrt{c_{j_1,j_2 \dots j_k}!}}.
	\end{align*}
	After rearranging the terms in the product and recalling the $\Tsum{\cdot}{\cdot}, \Tpar{\cdot}{\cdot}$ and entry-wise factorial and powering notations from Eqs.~\ref{tsum_notation}, \ref{tparity_notation} and \ref{entry_wise_notation} one obtains the following concise representation of the above formula:
	\begin{align*}
	\E_{D_{\bm v_{\pi(1)} \dots \otimes \bm v_{\pi(k)}}} H_{\bm c}(\bm T) & = \frac{1}{\sqrt{d^{k \|\bm c\|_1} \cdot  \bm c!}} \cdot  \bm v_{\pi(1)}^{\Tsum{\bm c}{1}} \cdot \bm v_{\pi(2)}^{\Tsum{\bm c}{2}} \dots \cdot \bm v_{\pi(k)}^{\Tsum{\bm c}{k}} \\
	& =  \frac{1}{\sqrt{d^{k \|\bm c\|_1} \cdot  \bm c!}} \cdot  \bm v_{1}^{\Tsum{\bm c}{1}[\pi]} \cdot \bm v_{2}^{\Tsum{\bm c}{2}[\pi]} \dots \cdot \bm v_{K}^{\Tsum{\bm c}{K}[\pi]} \\
	& = \frac{1}{\sqrt{d^{k \|\bm c\|_1} \cdot  \bm c!}} \cdot  \bm v_1^{\Tpar{\bm c}{1}[\pi]} \cdot \bm v_2^{\Tpar{\bm c}{2}[\pi]} \dots \cdot \bm v_K^{\Tpar{\bm c}{K}[\pi]}.
	\end{align*}
	In the last step, we used the fact that $\bm v_i \in \{\pm 1\}^d, \; \forall \; i \in [K]$. Substituting this in the Fourier expansion of $\bm q$, one obtains the following representation of the bias $\E_{D_{\bm v_{\pi(1)} \cdots \otimes \bm v_{\pi(k)}}} H_{\bm c}(\bm T) - \E_{D_0} q(\bm T)$ as a polynomial in the Boolean vectors $\bm v_1, \bm v_2, \cdots \bm v_K$: 
	\begin{align*}
	&\E_{D_{\bm v_{\pi(1)} \dots \otimes \bm v_{\pi(k)}}} H_{\bm c}(\bm T) - \E_{D_0} q(\bm T) = \\& \hspace{5cm} \sum_{\bm c \in \tensor{\W^d}{k}: \|\bm c\|_1 \geq 1} \frac{\hat q(\bm c)}{\sqrt{d^{k \|\bm c\|_1} \cdot  \bm c!}} \cdot  \bm v_1^{\Tpar{\bm c}{1}[\pi]}  \cdots \cdot \bm v_K^{\Tpar{\bm c}{K}[\pi]}.
	\end{align*} 
	 Observing that $\Tpar{\bm c}{l} \in \{0,1\}^d$ for any $\bm c, l$ we can regroup the terms in the above summation as follows:
	\begin{align}
	\E_{D_{\bm v_{\pi(1)} \cdots \otimes \bm v_{\pi(k)}}} q(\bm T) - \E_{D_0} q(\bm T)  &= \sum_{\bm r_1,  \cdots \bm r_K \in \{0,1\}^d} \tilde{q}_\pi(\bm r_1, \bm r_2 \cdots \bm r_K) \cdot \bm v_1^{\bm r_1} \cdot \bm v_2^{\bm r_2} \cdots \cdot \bm v_K^{\bm r_K}, \label{rep_formula_1} \\ 
	\tilde{q}_\pi(\bm r_1, \cdots \bm r_K) & \explain{def}{=} \sum_{\substack{\bm c \in \tensor{\W^d}{k}: \|\bm c\|_1 \geq 1 \\ \Tpar{\bm c}{i}[\pi] = \bm r_i \; \forall i \in [K]}} \frac{\hat{q}(\bm c)}{\sqrt{d^{k \|\bm c\|_1}\cdot \bm c!}}. \label{rep_formula_2}
	\end{align}
	The above representation will allow us to leverage tools from Fourier analysis over the Boolean hypercube in the following step.
	
	\paragraph{Step 3, Hypercontractivity:} We define the following polynomial in $\bm v_1, \bm v_2 \cdots \bm v_K \in \{\pm 1\}^d$, for any $u \in \N$, $u \geq 2$.
	\begin{align*}
	    P_u(\bm v_1, \bm v_2 \cdots \bm v_K) & \explain{def}{=} \sum_{\bm r_1,  \cdots \bm r_K \in \{0,1\}^d}  (\sqrt{u-1})^{\|\bm r_1\|_1 + \cdots + \|\bm r_K\|_1} \cdot \tilde{q}_\pi(\bm r_{1:K}) \cdot \bm v_1^{\bm r_1} \cdot \bm v_2^{\bm r_2} \cdots \cdot \bm v_K^{\bm r_K},
	\end{align*}
	where the coefficients $\tilde{q}$ have been defined in Eq.~\ref{rep_formula_2}. Note that the bias $\E_{D_{\bm v_{\pi(1)} \dots \otimes \bm v_{\pi(k)}}} q(\bm T) - \E_{D_0} q(\bm T)$ is a smoothed out version of the polynomial $P$ (cf.~Eq.~\ref{rep_formula_1}) in the following sense: 
	\begin{align*}
	    	\E_{D_{\bm v_{\pi(1)} \dots \otimes \bm v_{\pi(k)}}}- \E_{D_0} q(\bm T)  &= \smooth{P}{\sqrt{u-1}}(\bm v_1, \bm v_2 \dots , \bm v_K),
	\end{align*}
    where, for any $\lambda \geq 1$, the operator $\smooth{\cdot}{\lambda}$ down weights the coefficient of $\bm v_1^{\bm r_1} \cdots \bm v^{\bm r_K}_K$ by $\lambda^{- (\|\bm r_1\|_1 +   \cdots \|\bm r_K\|_1)}$ (see Appendix \ref{fourier_boolean_appendix} for more background). $(2,u)$-Hypercontractive inequalities allow us to upper bound the $L_u$ norm of $\smooth{P}{\sqrt{u-1}}(\bm v_1, \bm v_2 \cdots , \bm v_K)$ in terms of the $L_2$ norm of $P(\bm v_1, \bm v_2 \cdots \bm v_K)$. Applying the $(2,u)$-Hypercontractive inequality (Fact~\ref{hypercontractive_fact}), we obtain,
    \begin{align}
        \E |\E_{D_{\bm v_{\pi(1)} \cdots \otimes \bm v_{\pi(k)}}} - \E_{D_0} q(\bm T)|^u & \leq   \left(\E P_u^2(\bm v_1, \bm v_2 \cdots \bm v_K) \right)^{\frac{u}{2}}.
    \label{hypercontractive_conclusion}
    \end{align}
    Note that the monomial functions $\bm v_1^{\bm r_1} \bm v_2^{\bm r_2} \cdots \bm v_K^{\bm r_K}$ are orthonormal when $\bm v_i \explain{i.i.d.}{\sim} \unif{\{\pm 1\}^d}$ (See again Appendix~\ref{fourier_boolean_appendix}). Hence, by Parseval's identity and Eq.~\ref{rep_formula_2},
    \begin{align*}
        &\E P_u^2(\bm v_1, \bm v_2 \cdots \bm v_K)  = \sum_{\bm r_1,  \cdots \bm r_K \in \{0,1\}^d}  (u-1)^{\|\bm r_1\|_1 + \|\bm r_2\| \cdots + \|\bm r_K\|_1} \cdot \tilde{q}^2_\pi(\bm r_1, \bm r_2 \cdots \bm r_K) \\
        & \explain{}{=}  \sum_{\bm r_1,  \cdots \bm r_K \in \{0,1\}^d}  (u-1)^{\|\bm r_1\|_1 + \|\bm r_2\| \cdots + \|\bm r_K\|_1} \cdot \left( \sum_{\substack{\bm c \in \tensor{\W^d}{k}: \|\bm c\|_1 \geq 1 \\ \Tpar{\bm c}{i}[\pi] = \bm r_i \; \forall i \in [K]}} \frac{\hat{q}(\bm c)}{\sqrt{d^{k \|\bm c\|_1}\cdot \bm c!}} \right)^2.
    \end{align*}
    Applying the Cauchy-Schwarz inequality,
    \begin{align*}
        &\E P_u^2(\bm v_{1:K})  \leq \\ &\hspace{0.2cm} \sum_{\bm r_1,  \cdots \bm r_K}  (u-1)^{\|\bm r_1\|_1 +  \cdots + \|\bm r_K\|_1} \cdot \left( \sum_{\substack{\bm c:  \|\bm c\|_1 \geq 1 \\ \Tpar{\bm c}{i}[\pi] = \bm r_i \; \forall i}} \frac{1}{d^{k \|\bm c\|_1}\cdot \bm c!} \right) \cdot \left(  \sum_{\substack{\bm c: \|\bm c\|_1 \geq 1 \\ \Tpar{\bm c}{i}[\pi] = \bm r_i \; \forall i}} \hat{q}^2(\bm c)\right).
    \end{align*}
    Note that, since $\E_{D_0} q^2(\bm T) = 1$,
    \begin{align*}
        \sum_{\bm r_1,  \cdots \bm r_K} \sum_{\substack{\bm c: \|\bm c\|_1 \geq 1 \\ \Tpar{\bm c}{i}[\pi] = \bm r_i \; \forall i}} \hat{q}^2(\bm c) \leq 1.
    \end{align*}
    Hence by the $\ell_1$/$\ell_\infty$ H\"older's inequality:
    \begin{align*}
        \E P_u^2(\bm v_1, \bm v_2 \cdots \bm v_K) & \leq \max_{\bm r_1, \bm r_2 \cdots \bm r_K \in \{0,1\}^d}  (u-1)^{\|\bm r_1\|_1 +  \cdots + \|\bm r_K\|_1}  \cdot \left( \sum_{\substack{\bm c:  \|\bm c\|_1 \geq 1 \\ \Tpar{\bm c}{i}[\pi] = \bm r_i \; \forall i}} \frac{1}{d^{k \|\bm c\|_1}\cdot \bm c!} \right).
    \end{align*}
    By a relabelling of $[d]$, the expression inside the maximum in the above display depends on $\bm r_1, \bm r_2 \cdots \bm r_K$ only through $l_1 \explain{def}{=} \|\bm r_1\|_1, l_2 \explain{def}{=} \|\bm r_2\|_1 \cdots , l_K \explain{def}{=} \|\bm r_K\|_1$. Hence we have,
    \begin{align*}
         \E P_u^2(\bm v_1, \bm v_2 \cdots \bm v_K) & \leq \max_{l_1,l_2 \cdots l_K \in \W} \left( e \cdot  (u-1)^{l_1 + l_2 \dots l_K} \cdot p_\pi(l_1,l_2 \cdots l_K) \right),
    \end{align*}
    where,
    \begin{align*}
        p_\pi(l_1,l_2 \cdots l_K) & \explain{def}{ =} \frac{1}{e} \sum_{\substack{\bm c:  \|\bm c\|_1 \geq 1 \\ \Tpar{\bm c}{i}[\pi] =(\bm{1}_{l_i}, \bm 0)\; \forall \;  i \;  \in \;  [K]}} \frac{1}{d^{k \|\bm c\|_1}\cdot \bm c!}.
    \end{align*}
    Substituting the above bound on the second moment in Eq.~\ref{hypercontractive_conclusion}, we obtain,
    \begin{align*}
        \E |\E_{D_{\bm v_{\pi(1)} \dots \otimes \bm v_{\pi(k)}}} q(\bm T) - \E_{D_0} q(\bm T)|^u & \leq \left( \max_{l_1,l_2 \cdots l_K \in \W} \left( e \cdot  (u-1)^{l_1 + l_2 \dots l_K} \cdot p_\pi(l_1,l_2 \dots l_K) \right) \right)^{\frac{u}{2}}.
    \end{align*}
    Finally Markov's inequality (cf.~Eq.~\ref{markov_inequality}) gives us,
    \begin{align*}
        &\P\left[ \left| \E_{D_{\bm v_{\pi(1)} \dots \otimes \bm v_{\pi(k)}}} q(\bm T) - \E_{D_0} q(\bm T) \right| > \epsilon \right]  \leq \\ & \hspace{5cm} \left( \frac{1}{\epsilon^2} \cdot \max_{l_1,l_2 \dots l_K \in \W} \left( e \cdot  (u-1)^{l_1 + l_2 \dots l_K} \cdot p_\pi(l_1,l_2 \dots l_K) \right) \right)^{\frac{u}{2}}.
    \end{align*}
    This concludes the proof of the proposition for the reference measure $D_0$. The result for the reference measure $\overline{D}(\pi)$ follows via a similar calculation and we only sketch the argument for this case. Note that if $\bm T \sim D_0$, then,
    \begin{align*}
        \bm T + \frac{\overline{\bm V}(\pi)}{\sqrt{d^k}} \sim \overline{D}(\pi).
    \end{align*}
    Now consider any query function such that $\E_{\overline{D}(\pi)} q^2(T)  = 1$. This means that:
    \begin{align*}
        \E_{D_0} q^2 \left( \bm T + \frac{\overline{\bm V}(\pi)}{\sqrt{d^k}} \right) = 1.
    \end{align*}
    Hence we can expand the function $q(\frac{\overline{\bm V}(\pi)}{\sqrt{d^k}} + \cdot) $ in the multivariate Hermite Basis:
    \begin{align*}
        q \left(\frac{\overline{\bm V}(\pi)}{\sqrt{d^k}} + \bm T \right) & = \sum_{\bm c} \hat{q}(\bm c) H_{\bm c}(\bm T).
    \end{align*}
    Hence,
    \begin{align*}
        \E_{\overline{D}(\pi)} q(\bm T)  = \hat{q}(\bm 0), \;  \E_{\overline{D}(\pi)} q^2(\bm T) = \sum_{\bm c} \hat{q}(\bm c)^2 = 1.
    \end{align*}
    Define the tensor $\bm V(\pi)$ as:
    \begin{align*}
        \bm V(\pi) & = \bm v_{\pi(1)} \otimes \bm v_{\pi(2)} \dots \otimes \bm v_{\pi(k)}.
    \end{align*}
    Also note that when $\bm T \sim D_0$, then,
    \begin{align*}
        \bm T + \frac{\bm V(\pi)}{\sqrt{d^k}} \sim D_{\bm v_{\pi(1)} \otimes \bm v_{\pi(2)} \dots \otimes \bm v_{\pi(k)}}.
    \end{align*}
    Hence, 
    \begin{align*}
        \E_{D_{\bm v_{\pi(1)} \otimes \bm v_{\pi(2)} \dots \otimes \bm v_{\pi(k)}}} q(\bm T) & = \E_{D_0}  q\left( \bm T + \frac{\bm V(\pi)}{\sqrt{d^k}} \right) \\
        & = \E_{D_0} \left[ \sum_{\bm c} \hat{q}(\bm c) H_{\bm c} \left( \bm T + \frac{\bm V(\pi) - \overline{\bm V}(\pi)}{\sqrt{d^k}} \right) \right].
    \end{align*}
    Due to the fact that each entry of $\bm V(\pi)$ is either 1 or marginally Rademacher distributed and $\E \bm V(\pi) = \overline{\bm V}(\pi)$, we have,
    \begin{align*}
        V(\pi)_{i_1,i_2 \dots ,i_k} - \overline{V}(\pi)_{i_1,i_2 \dots , i_k} & = \begin{cases} 0 &:   (i_1,i_2 \dots i_k) \in \supp{\overline{\bm V}(\pi)} \\  V(\pi)_{i_1,i_2 \dots ,i_k} &: (i_1,i_2 \dots i_k) \not\in \supp{\overline{\bm V}(\pi)} \end{cases}.
    \end{align*}
    Using Fact \ref{hermite_special_property}, we can compute,
    \begin{align*}
        \E_{D_0}  H_{\bm c} \left( \bm T + \frac{\bm V(\pi) - \overline{\bm V}(\pi)}{\sqrt{d^k}} \right) & = \begin{cases} 0 &: \supp{\bm c} \cap \supp{\overline{\bm V}(\pi)} \neq \phi \\ \frac{\bm V(\pi)^{\bm c}}{\sqrt{\bm c!d^{k \|\bm c\|_1}}} &: \supp{\bm c} \cap \supp{\overline{\bm V}(\pi)} = \phi \end{cases}.
    \end{align*}
    Hence we obtain the following Fourier expansion of the bias $\E_{D_{\bm v_{\pi(1)} \otimes \bm v_{\pi(2)} \dots \otimes \bm v_{\pi(k)}}} q(\bm T) - \E_{D_0} q(\bm T)$:
    \begin{align*}
	&\E_{D_{\bm v_{\pi(1)} \dots \otimes \bm v_{\pi(k)}}} H_{\bm c}(\bm T) - \E_{\overline{D}(\pi)} q(\bm T)= \\ &\hspace{4cm} \sum_{\substack{\bm c \in \tensor{\W^d}{k}: \|\bm c\|_1 \geq 1\\ \supp{\bm c} \cap \supp{\overline{\bm V}(\pi)} = \phi}} \frac{\hat q(\bm c)\cdot  \bm v_1^{\Tpar{\bm c}{1}[\pi]} \cdots \cdot \bm v_K^{\Tpar{\bm c}{K}[\pi]}}{\sqrt{d^{k \|\bm c\|_1} \cdot  \bm c!}} .
	\end{align*} 
	From here on wards the arguments made previously for the case when the reference distribution was $D_0$ can be repeated verbatim and give the second claim of the proposition. 
    \end{proof}
\section{Asymptotic Analysis of $p_\pi$ and $\overline{p}_\pi$}
Recall that Proposition \ref{fourier_analytic_prop} showed that the number of queries required to solve the Tensor PCA problems was intimately connected to the asymptotics of the combinatorial coefficients $p_\pi(l_1,l_2 \dots , l_K)$ and $\overline{p}_\pi(l_1,l_2, \dots ,l_K)$. We had also arrived at a probabilistic interpretation of these coefficients as:
\begin{align*}
p_\pi(l_1,l_2 \dots , l_K) & = \P \left( \|\bm C\|_1 \geq 1, \Tpar{\bm C}{i}[\pi] = (\bm{1}_{l_i}, \bm 0) \; \forall \; i \; \in [K]  \right),
\end{align*}
and,
\begin{multline*}
     \overline{p}_\pi(l_1,l_2,\dotsc, l_K) \\ = \P \left( \|\bm C\|_1 \geq 1, \Tpar{\bm C}{i}[\pi] = (\bm{1}_{l_i}, \bm 0) \; \forall \; i \; \in [K], \; \supp{\bm C} \cap \supp{\overline{\bm V}(\pi)} = \phi  \right),
\end{multline*}
where $\bm C \in \tensor{\W^d}{k}$ was a tensor with i.i.d. $\pois{d^{-k}}$ entries. This goal of this section is to prove Proposition \ref{asymptotic_proposition} which provides an upper bound on these combinatorial coefficients. This section is organized as follows:
\begin{itemize}
    \item Section \ref{rademacher_formula_proof} contains the proof of Lemma \ref{rademacher_formula_proof} which provides a convenient analytic formula for the probabilities $\P ( \Tpar{\bm C}{i}[\pi] = (\bm{1}_{l_i}, \bm 0) \; \forall \; i \; \in [K]  )$.
    \item Section \ref{asymptotics} uses this formula to prove Proposition \ref{asymptotic_proposition}.
\end{itemize}
\subsection{Proof of Lemma \ref{representation_formula_rademacher}}
\label{rademacher_formula_proof}
\begin{proof}
The proof of this lemma relies on the following simple observation. Let $Y$ be an arbitrary random variable taking values in $\W$. Let $X$ be a Rademacher random variable independent of $Y$. Then,
\begin{align*}
    \Indicator{\text{Y has even parity}} = \E [X^Y | Y], \; \Indicator{\text{Y has odd parity}}  = \E [X^{Y+1} | Y] .
\end{align*}
Appealing to this observation, we have,
\begin{align*}
    \Indicator{ \Tpar{\bm C}{i}[\pi] = (\bm{1}_{l_i}, \bm 0) \; \forall \; i \; \in [
    K]}& = \E \left[ \bm X_1^{\Tsum{\bm C}{1}[\pi]} \cdot  \cdots \bm X_K^{\Tsum{\bm C}{K}[\pi]}  \prod_{i=1}^K \prod_{j=1}^{l_i} X_{ij}\; \bigg| \; \bm C \right].
\end{align*}
Recalling Eq.~\ref{tsum_notation}, the above expression can be written as:
\begin{align*}
    \Indicator{ \Tpar{\bm C}{i}[\pi] = (\bm{1}_{l_i}, \bm 0) \; \forall \; i \; \in [
    K]} & = \E \left[ \bm X_{\pi(1)}^{\Tsum{\bm C}{1}} \cdot   \cdots \bm X_{\pi(k)}^{\Tsum{\bm C}{k}}  \prod_{i=1}^K \prod_{j=1}^{l_i} X_{ij}\; \bigg| \; \bm C \right].
\end{align*}
Hence,
\begin{align*}
    &\P\left( \Tpar{\bm C}{i} = (\bm{1}_{l_i}, \bm 0) \; \forall \; i \; \in [K] \right) \\& \hspace{1cm}= \E \left[ \bm X_{\pi(1)}^{\Tsum{\bm C}{1}} \cdot  \bm X_{\pi(2)}^{\Tsum{\bm C}{2}} \cdot  \cdots \bm X_{\pi(k)}^{\Tsum{\bm C}{k}} \cdot \prod_{i=1}^K \prod_{j=1}^{l_i} X_{ij}\right] \\
    &\hspace{1cm} =  \E \E \left[ \bm X_{\pi(1)}^{\Tsum{\bm C}{1}} \cdot  \bm X_{\pi(2)}^{\Tsum{\bm C}{2}} \cdot  \cdots \bm X_{\pi(k)}^{\Tsum{\bm C}{k}} \cdot \prod_{i=1}^K \prod_{j=1}^{l_i} X_{ij} \bigg| \; \|\bm C \|_1, \bm X_1, \bm X_2 \cdots \bm X_K\right] \\
    & \hspace{1cm} \explain{(a)}{=} \E \left[ \left(\prod_{i=1}^K \E \left[ \bm X_{\pi(i)}^{\Tsum{\bm C}{i}} \; \bigg| \|\bm C\|_1, \bm X_{\pi(i)}\;   \right] \right) \cdot \prod_{i=1}^K \prod_{j=1}^{l_i} X_{ij} \right] \\
    & \hspace{1cm} \explain{(b)}{=} \E \left[ (\overline{X}_1^{s_1} \cdot \overline{X}_2^{s_2} \cdot \cdots \overline{X}_K^{s_K})^{\|\bm C\|_1} \cdot \prod_{i=1}^K \prod_{j=1}^{l_i} X_{ij} \right] \\
    & \hspace{1cm} \explain{(c)}{=} \frac{1}{e} \cdot \E \left[ e^{\overline{X}_1^{s_1} \cdot \overline{X}_2^{s_2} \cdot \cdots \overline{X}_K^{s_K}} \prod_{i=1}^K \prod_{j=1}^{l_i} X_{ij}  \right]. 
\end{align*}
In the above display, the Equation marked (a) uses the conditional independence implied by Proposition \ref{poisson_tensor_proposition}. In the step marked (b), we used the formula for the generating function of a Multinomial distribution (Fact \ref{multinomial_distribution_properties}). The equality (c) uses the formula for the generating function of a Poisson distribution (Fact \ref{poisson_distribution_properties}). This proves the claim of the lemma.
\end{proof}

\subsection{Proof of Proposition \ref{asymptotic_proposition}} \label{asymptotics}
This section is devoted to the proof of Proposition \ref{asymptotic_proposition}. Recall that Lemma \ref{representation_formula_rademacher} relates $p_{\pi}, \overline{p}_\pi$ to expectations of the form:
\begin{align*}
    \E \left[ e^{\overline{X}_1^{s_1} \cdot \overline{X}_2^{s_2} \cdot \dots \overline{X}_K^{s_K}} \prod_{i=1}^K \prod_{j=1}^{l_i} X_{ij}  \right].
\end{align*}
Our strategy to analyze the above expectation will to approximate the exponential by its Taylor expansion of a suitable degree. Recalling the independence of the Rademacher vectors $\bm X_{1:K}$, this reduces our task to analyze expectations of the form:
\begin{align*}
    \E  \left[ \overline{X}^s \prod_{i=1}^l X_i \right].
\end{align*}
This is the content of the following intermediate lemma. 
\begin{lemma} \label{basic_bounds}
Let $\bm X \in \{\pm 1\}^d$ be a random vector with i.i.d.~Rademacher entries. Let $\overline{X}$ be the average of the entries of $\bm X$. Then, if $l>s$ or if $l+s$ has odd parity, then,
\begin{align*}
  \E  \left[ \overline{X}^s \prod_{i=1}^l X_i \right] & = 0.
\end{align*}
Otherwise,
\begin{align*}
    0 \leq \E  \left[ \overline{X}^s \prod_{i=1}^l X_i \right] & \leq  \frac{C_s}{\sqrt{d^{l+s}}},
\end{align*}
where $C_s$ denotes a constant depending only on $s$.
\end{lemma}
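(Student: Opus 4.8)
The plan is to compute $\E[\overline{X}^s \prod_{i=1}^l X_i]$ by expanding the power of the average. Write $\overline{X} = \frac{1}{d}\sum_{j=1}^d X_j$, so that
\[
\overline{X}^s = \frac{1}{d^s} \sum_{j_1, j_2, \dotsc, j_s \in [d]} X_{j_1} X_{j_2} \dotsm X_{j_s}.
\]
Multiplying by $\prod_{i=1}^l X_i = X_1 X_2 \dotsm X_l$ and taking expectations, each term becomes $\E[X_1 X_2 \dotsm X_l \cdot X_{j_1} \dotsm X_{j_s}]$. Since the $X_j$ are i.i.d.\ Rademacher, such an expectation is $1$ if every index in $\{1,\dotsc,l\} \cup \{j_1,\dotsc,j_s\}$ appears an even number of times (counting multiplicity), and $0$ otherwise. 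In particular, each of the indices $1,2,\dotsc,l$ must be ``paired up'' using the $j_r$'s, which already forces each of them to appear among $j_1,\dotsc,j_s$, and combined with the requirement that the remaining $j_r$'s also pair up among themselves, this is impossible unless $l \le s$ and $l+s$ is even. This establishes the vanishing claim.

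For the nonzero case, the key observations are: (i) every surviving term contributes exactly $+1$, so $\E[\overline{X}^s \prod_i X_i] = d^{-s} N$ where $N$ is the number of tuples $(j_1,\dotsc,j_s) \in [d]^s$ for which the full multiset $\{1,\dotsc,l,j_1,\dotsc,j_s\}$ has all even multiplicities — this immediately gives nonnegativity; and (ii) I need an upper bound $N \le C_s \, d^{(s-l)/2}$, since then $d^{-s} N \le C_s \, d^{-(s+l)/2}$. To bound $N$: a surviving tuple must use each of $1,\dotsc,l$ an odd number of times among the $j_r$'s, and every other value an even number of times; hence the \emph{number of distinct values} appearing in $(j_1,\dotsc,j_s)$ is at most $l + \lfloor (s-l)/2 \rfloor = (s+l)/2$ (the $l$ special values, plus at most $(s-l)/2$ others since each ``other'' value is used at least twice and the non-special budget is $s-l$). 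The number of tuples in $[d]^s$ whose set of distinct values has size at most $m$ is at most $\binom{s}{\le}$-type combinatorial factor times $d^m$; concretely it is at most $m^s \binom{d}{m} \le m^s d^m / m! \le C_s' d^m$ for a constant depending only on $s$. Taking $m = (s+l)/2$ gives $N \le C_s d^{(s+l)/2}$, and since $l \le s$ we have (s+l)/2 $\le s$, consistent with $N\le d^s$. Wait — I need to recheck: we want $d^{-s} N \le C_s d^{-(s+l)/2}$, i.e.\ $N \le C_s d^{s-(s+l)/2} = C_s d^{(s-l)/2}$. So the distinct-value count bound $m \le (s+l)/2$ is the wrong target; I instead need the sharper counting that among the $s$ slots, $l$ of the distinct values are forced and the remaining mass $s-l$ is distributed in even blocks, giving at most $(s-l)/2$ free distinct values but crucially only $(s-l)/2$ \emph{new degrees of freedom} after accounting that the $l$ forced values and the repetition structure are constrained — so the count of such tuples is $O(d^{(s-l)/2})$ because once we fix which of the $l$ forced symbols occupy which slots (a constant number of choices) and the pattern of the remaining slots (a constant number of set-partition patterns), each remaining block of size $\ge 2$ contributes one free choice from $[d]$, and there are at most $(s-l)/2$ such blocks.

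The main obstacle is getting the exponent in the counting bound exactly right: the naive ``number of distinct values'' bound is too lossy by a factor of $d^l$, so the argument must exploit that the $l$ distinguished indices are \emph{fixed} (not free choices) and only the genuinely new repeated values cost a factor of $d$ each, with at most $(s-l)/2$ of them. I would formalize this by summing over all set partitions of the $s$ slots into blocks, discarding partitions incompatible with the parity/pairing constraint, and for each admissible partition bounding the number of label assignments by $d^{(s-l)/2}$ (one free label per non-singleton block not pinned to $\{1,\dotsc,l\}$), then absorbing the finite number of partitions and assignments of the forced labels into the constant $C_s$. Everything else is bookkeeping.
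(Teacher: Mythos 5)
Your proof is correct, but it takes a genuinely different route from the paper's. The paper decomposes $\overline{X} = Z/d + \tfrac{d-l}{d}\overline{Y}$ with $Z = X_1 + \dotsb + X_l$ and $\overline{Y}$ the mean of the remaining $d-l$ coordinates, expands $\overline{X}^s$ via the binomial theorem, kills the terms with $t < l$ by orthogonality of $Z^t$ against $\prod_{i\le l}X_i$, and then bounds the surviving terms using the fact that $\sqrt{d-l}\,\overline{Y}$ is $1$-subgaussian, so $\E|\overline{Y}|^{s-t} \lesssim_s d^{-(s-t)/2}$. You instead expand $\overline{X}^s = d^{-s}\sum_{j_1,\dotsc,j_s} X_{j_1}\dotsm X_{j_s}$ and count surviving tuples directly via the parity criterion for Rademacher monomials. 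This buys you a completely elementary proof (no probabilistic tools beyond parity) and makes the vanishing and nonnegativity claims transparent, at the cost of a slightly more delicate counting step; the paper's route is shorter once the decomposition and subgaussianity are set up, and in particular dispatches the nonnegativity claim in one line via the multinomial theorem rather than via "every surviving term contributes $+1$" (both observations are equivalent).

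Your counting argument, after the in-line self-correction, is sound, and that correction is exactly where the care is needed: the naive "at most $(s+l)/2$ distinct values" bound is off by a factor of $d^l$, and the fix is precisely that the $l$ symbols $1,\dotsc,l$ are pinned (not free choices from $[d]$), so only the at-most-$(s-l)/2$ fresh repeated values each cost a factor of $d$, giving $N \le C_s\,d^{(s-l)/2}$ and hence $d^{-s}N \le C_s\,d^{-(s+l)/2}$. For a clean writeup you should excise the false start and state the partition-of-slots argument directly: choose which slots are occupied by which of the forced symbols $1,\dotsc,l$ (each an odd number of times — $O_s(1)$ choices), partition the remaining slots into even-sized blocks ($O_s(1)$ partitions), and assign a fresh value in $[d]\setminus[l]$ to each block; since there are at most $(s-l)/2$ blocks, the total count is $O_s(d^{(s-l)/2})$.
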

\begin{proof}
When $l+s$ has odd parity, then $\overline{X}^s \prod_{i=1}^l X_i$ is an odd polynomial in $\bm X$ and hence, $$\E[\overline{X}^s \prod_{i=1}^l X_i] = 0, \; \text{if $l+s$ is odd}.$$
When $l>s$, note that $\overline{X}^s$ is a degree $s$ polynomial, and since $s<l$, it is orthogonal to $\prod_{i=1}^l X_i$, hence,
$$\E[\overline{X}^s \prod_{i=1}^l X_i] = 0, \; \text{if $l>s$}.$$
Finally when $l \leq s$, observe that $\E[\overline{X}^s \prod_{i=1}^l X_i]$ is simply the coefficient of $\prod_{i=1}^l X_i$ in the expansion of $\overline{X}^s$ in the monomial basis for Boolean functions (see Appendix \ref{fourier_boolean_appendix}). Expanding $\overline{X}^s$ using the Multinomial theorem shows that it is a polynomial with all non-negative coefficients. This gives us the claim:
\begin{align*}
    \E[\overline{X}^s \prod_{i=1}^l X_i] & \geq 0.
\end{align*}
In order to understand its asymptotics, we decompose $\overline{X}$ as:
\begin{align*}
    \overline{X} & = \frac{Z}{d} + \frac{d-l}{d} \overline{Y}, \; Z \explain{def}{=} X_1 + X_2 + \dots + X_l, \; \overline{Y} = \frac{ \sum_{i=l+1}^d X_i}{d-l}.
\end{align*}
Hence,
\begin{align*}
    \E \overline{X}^s \prod_{i=1}^l X_i & = \sum_{t=0}^s \binom{s}{t} \cdot \left(\frac{d-l}{d}\right)^{s-t} \cdot \frac{1}{d^t} \cdot \E \left[ Z^t  \prod_{i=1}^l X_i\right] \cdot \E \overline{Y}^{s-l}.
\end{align*}
We observe that (using the same argument made previously), when $t < l$, $Z^t$ is a degree $t$ polynomial in $X_1,X_2 \dots X_l$ and hence orthogonal to $\prod_{i=1}^l X_i$:
\begin{align*}
    \E \left[ Z^t  \prod_{i=1}^l X_i\right] & = 0.
\end{align*}
Hence,
\begin{align}
\label{formula_basic_lemma}
     \E \overline{X}^s \prod_{i=1}^l X_i & = \sum_{t=l}^s \binom{s}{t} \cdot \left(\frac{d-l}{d}\right)^{s-t} \cdot \frac{1}{d^t} \cdot \E \left[ Z^t  \prod_{i=1}^l X_i\right] \cdot \E \overline{Y}^{s-t}
\end{align}
Next we observe that,
\begin{align*}
     |Z^t  \prod_{i=1}^l X_i| & \leq l^t,
\end{align*}
Since $\sqrt{d-l} \cdot  \overline{Y}$ is 1-subgaussian (since it is a normalized sum of i.i.d.~Rademacher variables), we have,
\begin{align*}
    \E |\overline{Y}^{s-t}| & \leq \frac{1}{\sqrt{(d-l)^{s-t}}} \cdot 2^{s-t} \cdot (s-t)^{\frac{s-t}{2}} \leq \frac{C_s}{\sqrt{d^{s-t}}},
\end{align*}
where $C_s$ denotes a constant depending only on $s$.
Substituting these estimates in the Eq.~\ref{formula_basic_lemma}, we obtain,
\begin{align*}
     \E \overline{X}^s \prod_{i=1}^l X_i & \leq \frac{C_s}{\sqrt{d^{s+t}}},
\end{align*}
for some constant $C_s$ depending only on $s$. This concludes the proof of the lemma. 
\end{proof}
We now present the proof of Proposition \ref{asymptotic_proposition} restated below for convenience. 
\asympprop*
\begin{proof} 
First recall the definitions of the coefficients $p_\pi$ and $\overline{p}_\pi$: 
\begin{align*}
     p_\pi(l_{1:K}) & = \P \left( \|\bm C\|_1 \geq 1, \Tpar{\bm C}{i}[\pi] = (\bm{1}_{l_i}, \bm 0) \; \forall \; i \; \in [K] \right), \\
     \overline{p}_\pi(l_{1:K}) & = \P \left( \|\bm C\|_1 \geq 1, \Tpar{\bm C}{i}[\pi] = (\bm{1}_{l_i}, \bm 0) \; \forall \; i \; \in [K], \; \supp{\bm C} \cap \supp{\overline{\bm V}(\pi)} = \phi  \right).&
\end{align*}
The above expressions immediately give us $\overline{p}_\pi(l_1,l_2 \dots l_K) \leq {p}_\pi(l_1,l_2 \dots l_K)$. Next we split our analysis into 4 cases: 
\begin{description}
\item [Case 1: $l_1 \vee l_2 \vee \dots l_K = 0$.] In this case, Lemma \ref{representation_formula_rademacher} gives us:
\begin{align*}
    \P\left( \Tpar{\bm C}{i}[\pi] =\bm 0 \; \forall \; i \; \in [K] \right) & = \frac{1}{e} \E \left[ e^{\overline{X}_1^{s_1} \cdot \overline{X}_2^{s_2} \cdot \dots \overline{X}_K^{s_K}} \right],
\end{align*}
where $s_i$ were defined as:
\begin{align*}
    s_i & \explain{def}{=} | \pi^{-1}(i) | = |\{j \in [k]: \pi(j) = i\}|.
\end{align*}
Using the inequality $|e^x - 1 - x|  \leq  \frac{e x^2}{2} \; \forall \; x \leq 1$ one obtains:
\begin{align*}
    &\P\left( \Tpar{\bm C}{i}[\pi] =\bm 0 \; \forall \; i \; \in [K] \right) \\&\hspace{3.8cm}\leq \frac{1}{e} \left(1 + \E \overline{X}_1^{s_1} \cdot \overline{X}_2^{s_2} \cdot \dots \overline{X}_K^{s_K} + \frac{e \cdot \E \overline{X}_1^{2 s_1} \cdot \overline{X}_2^{2 s_2} \cdot \dots \overline{X}_K^{2 S_K}}{2} \right) \\
    &\hspace{3.8cm} = \frac{1}{e} + \frac{1}{2 \cdot d^{k}}.
\end{align*}
By Lemma \ref{basic_bounds},
\begin{align*}
    \E \overline{X}_1^{2 s_1} \cdot \overline{X}_2^{2 s_2} \cdot \dots \overline{X}_K^{2 S_K} & \leq \frac{C_k}{d^{s_1 + s_2 + \dots S_K}} \leq \frac{C_k}{d^k}.
\end{align*}
Recall that $\oddness$ denoted the number of $i \in [K]$ such that $s_i$ is odd. If $\oddness \geq 1$, then,
\begin{align*}
     \E \overline{X}_1^{s_1} \cdot \overline{X}_2^{s_2} \cdot \dots \overline{X}_K^{s_K} & = 0.
\end{align*}
Otherwise, if $\oddness = 0$, then Lemma \ref{basic_bounds} gives us,
\begin{align*}
    \left| \E \overline{X}_1^{s_1} \cdot \overline{X}_2^{s_2} \cdot \dots \overline{X}_K^{s_K} \right| & \leq \frac{C_k}{\sqrt{d^k}}.
\end{align*}
Hence,
\begin{align*}
     \P\left( \Tpar{\bm C}{i}[\pi] =\bm 0 \; \forall \; i \; \in [K] \right) & \leq  \begin{cases} \frac{1}{e} + \frac{C_k}{\sqrt{d^k}} &: \oddness = 0 \\ 
     \frac{1}{e} + \frac{C_k}{d^k} &: \oddness \geq 1\end{cases}.
\end{align*}
We use the above estimates to bound $p_\pi(0,0, \dots 0)$ and $\overline{p}_\pi(0,0 \dots , 0)$.  Recall that,
\begin{align*}
    p_\pi(0,0, \dots 0) & = \P \left( \|\bm C\|_1 \geq 1, \Tpar{\bm C}{i}[\pi] = \bm 0 \; \forall \; i \; \in [K]  \right) \\
    & = \P \left( \Tpar{\bm C}{i}[\pi] = \bm 0 \; \forall \; i \; \in [K]  \right) - \P \left( \|\bm C\|_1 =0  \right) \\ &\explain{(a)}{=} \P \left( \Tpar{\bm C}{i}[\pi] = \bm 0 \; \forall \; i \; \in [K]\right)  - \frac{1}{e} \\
    & \leq \begin{cases} \frac{C_k}{\sqrt{d^k}} &: \oddness = 0 \\ 
     \frac{C_k}{d^k} &: \oddness \geq 1\end{cases}.
\end{align*}
In the equation marked (a) we used the fact that $\|\bm C\|_1 \sim \pois{1}$ (cf.~Proposition \ref{poisson_tensor_proposition}).
Next we analyze $\overline{p}_\pi(0,0, \dots 0)$. We have,
\begin{align*}
     &\overline{p}_\pi(0,\dots 0) \\& \hspace{0.3cm}= \P \left( \|\bm C\|_1 \geq 1, \Tpar{\bm C}{i}[\pi] = \bm 0 \; \forall \; i \; \in [K], \;   \supp{\bm C} \cap \supp{\overline{\bm V}(\pi)} = \phi\right) \\
    &\hspace{0.3cm} = \P \left( \Tpar{\bm C}{i}[\pi] = \bm 0 \; \forall \; i \; \in [K], \; \supp{\bm C} \cap \supp{\overline{\bm V}(\pi)} = \phi  \right) - \P \left( \|\bm C\|_1 =0  \right) \\ &\hspace{0.3cm}\explain{}{=} \P \left( \Tpar{\bm C}{i}[\pi] = \bm 0 \; \forall \; i \; \in [K], \; \supp{\bm C} \cap \supp{\overline{\bm V}(\pi)} = \phi  \right)  - \frac{1}{e}.
\end{align*}
Note that, 
\begin{align*}
    \supp{\bm C} \cap \supp{\overline{\bm V}(\pi)} = \phi \Leftrightarrow \sum_{(i_1,i_2 \dots i_k) \in \supp{\overline{\bm V}(\pi)} } C_{i_1,i_2 \dots , i_k} = 0.
\end{align*}
Furthermore, the random variables,
\begin{align*}
    \sum_{(i_1,i_2 \dots i_k) \in \supp{\overline{\bm V}(\pi)} } C_{i_1,i_2 \dots , i_k} \text{ and }  \;\Tpar{\bm C}{\cdot}[\pi], 
\end{align*}
are independent since any $C_{i_1,i_2 \dots i_k}$ with $(i_1, i_2 \dots , i_k) \in \supp{\overline{\bm V}(\pi)}$ is counted an even number of times in $\Tsum{\bm C}{\cdot}[\pi]$ and hence does not influence the parities $\Tpar{\bm C}{\cdot}[\pi]$. Hence,
\begin{align*}
     &\overline{p}_\pi(0,0, \dots 0)  =\\ &\hspace{1cm} \P \left( \Tpar{\bm C}{i}[\pi] = \bm 0 \; \forall \; i \; \in [K] \right) \cdot \P \left(  \sum_{(i_1,\dots i_k) \in \supp{\overline{\bm V}(\pi)} } C_{i_1,i_2 \dots , i_k} = 0 \right)  - \frac{1}{e}.
\end{align*}
Furthermore, 
\begin{align*}
    \sum_{(i_1,i_2 \dots i_k) \in \supp{\overline{\bm V}(\pi)} } C_{i_1,i_2 \dots , i_k} & \sim \pois{\frac{|\supp{\overline{\bm V}(\pi)}|}{d^k}},
\end{align*}
and, since the entries of $\overline{\bm V}(\pi)$ are in $\{0,1\}$:
\begin{align*}
    \frac{1}{d^k} \cdot |\supp{\overline{\bm V}(\pi)}| & = \frac{1}{d^k} \sum_{i_1,i_2 \dots  , i_k} \overline{\bm V}(\pi)_{i_1,i_2 \dots i_k} \\
    & = \frac{1}{d^k} \sum_{i_1,i_2 \dots  , i_k} (\E_{\bm X_{1:K} \sim \unif{\{\pm 1\}^d}} \bm X_{\pi(1)} \otimes \bm X_{\pi(2)} \dots \otimes \bm X_{\pi(k)})_{i_1,i_2 \dots i_k} \\
    & = \E \overline{X}_1^{s_1} \cdot \overline{X}_2^{s_2} \dots \overline{X}_K^{s_K}.
\end{align*}
Hence,
\begin{align*}
    \P \left(  \sum_{(i_1,\dots i_k) \in \supp{\overline{\bm V}(\pi)} } C_{i_1,i_2 \dots , i_k} = 0 \right) & = \exp \left( - \E \overline{X}_1^{s_1} \cdot \overline{X}_2^{s_2} \dots \overline{X}_K^{s_K}  \right).
\end{align*}
This gives us,
\begin{align*}
    &\overline{p}_\pi(0,0, \dots 0)\\ &= \P \left( \Tpar{\bm C}{i}[\pi] = \bm 0 \; \forall \; i \; \in [K] \right) \cdot \P \left(  \sum_{(i_1,\dots i_k) \in \supp{\overline{\bm V}(\pi)} } C_{i_1,i_2 \dots , i_k} = 0 \right)  - \frac{1}{e} \\
    & = \frac{1}{e} \left( \E \exp\left( \overline{X}_1^{s_1} \cdot \overline{X}_2^{s_2} \dots \overline{X}_K^{s_K} - \E \overline{X}_1^{s_1} \cdot \overline{X}_2^{s_2} \dots \overline{X}_K^{s_K} \right) - 1\right) \\
    & \leq \frac{1}{e} \cdot \left(   \E(\overline{X}_1^{s_1}  \dots \overline{X}_K^{s_K} - \E \overline{X}_1^{s_1} \dots \overline{X}_K^{s_K}) + \frac{e\E( \overline{X}_1^{s_1}  \dots \overline{X}_K^{s_K} - \E \overline{X}_1^{s_1} \dots \overline{X}_K^{s_K})^2}{2}   \right) \\
    & \leq \frac{C_k}{d^k}
\end{align*}
In the last step, we again appealed to Lemma \ref{basic_bounds}. Hence,
\begin{align*}
     \overline{p}_\pi(0,0, \dots 0) & \leq \frac{C_k}{d^k}.
\end{align*}
This concludes the analysis of this case. 
\item [Case 2: $l_i \leq s_i  \; i = 1,2 \dots , K, \; l_1 \vee l_2 \dots  \vee l_K \geq 1$. ] Recall that,
\begin{align*}
     p_\pi(l_1,l_2 \dots , l_K) & = \P \left( \|\bm C\|_1 \geq 1, \Tpar{\bm C}{i}[\pi] = (\bm{1}_{l_i}, \bm 0) \; \forall \; i \; \in [K] \right).
\end{align*}
Since $l_1 \vee l_2 \dots \vee l_K \geq 1$, can remove the condition $\|\bm C\|_1 \geq 1$:
\begin{align*}
    p_\pi(l_1,l_2 \dots , l_K) & = \P \left( \Tpar{\bm C}{i}[\pi] = (\bm{1}_{l_i}, \bm 0) \; \forall \; i \; \in [K] \right).
\end{align*}
By Lemma \ref{representation_formula_rademacher}, we have,
\begin{align*}
    p_\pi(l_1,l_2 \dots , l_K) & =  \frac{1}{e} \E \left[ e^{\overline{X}_1^{s_1} \cdot \overline{X}_2^{s_2} \dots \overline{X}_K^{s_K}} \prod_{i=1}^K \prod_{j=1}^{l_i} X_{ij} \right].
\end{align*}
Using the bound:
\begin{align*}
    1 + x  \leq e^x \leq 1 + x + \frac{e x^2}{2}, \; \forall \; x \leq 1,
\end{align*}
we obtain,
\begin{align*}
    p_\pi(l_1, \dots l_K) & \leq \frac{1}{e} \cdot \left( \prod_{i=1}^K  \prod_{j=1}^{l_i} \E X_{ij}  + \prod_{i=1}^k \E \left[ \overline{X}_i^{s_i} \prod_{j=1}^{l_i} X_{ij} \right] + \frac{e}{2} \cdot \prod_{i=1}^K \E \left[ \overline{X}_i^{2 s_i} \right] \right).
\end{align*}
We note that since $l_1 \vee l_2 \dots \vee l_K \geq 1$,
\begin{align*}
    \prod_{i=1}^K  \prod_{j=1}^{l_i} \E X_{ij} & = 0.
\end{align*}
If $s_i + l_i$ is even for each $i \in [K]$, then, Lemma \ref{basic_bounds} gives us,
\begin{align*}
    \prod_{i=1}^k \E \left[ \overline{X}_i^{s_i} \prod_{j=1}^{l_i} X_{ij} \right] \leq \frac{C_k}{\sqrt{d^{k + \sum_{i=1}^K l_i}}}, \; \prod_{i=1}^K \E \left[ \overline{X}_i^{2 s_i} \right] \leq \frac{C_k}{d^k}, \;  p_\pi(l_1, \dots l_K) & \leq \frac{C_k}{\sqrt{d^{k + \sum_{i=1}^K l_i}}}.
\end{align*}
On the other hand, if $s_i + l_i$ is odd even for a single $i$, Lemma \ref{basic_bounds} gives us,
\begin{align*}
    \prod_{i=1}^k \E \left[ \overline{X}_i^{s_i} \prod_{j=1}^{l_i} X_{ij} \right] =0, \; \prod_{i=1}^K \E \left[ \overline{X}_i^{2 s_i} \right] \leq \frac{C_k}{d^k}, \;  p_\pi(l_1, \dots l_K) & \leq \frac{C_k}{d^k}.
\end{align*}
Hence, when $l_i \leq s_i  \; i = 1,2 \dots , K, \; l_1 \vee l_2 \dots  \vee l_K \geq 1$, we have shown,
\begin{align*}
    p_\pi(l_1, \dots l_K) & \leq \begin{cases} C_k d^{-\frac{k + \sum_{i=1}^K l_i}{2}} &: \text{$l_i + s_i$ is even $\forall \; i \; \in [K]$} \\ C_k d^{-k} &: \text{otherwise} \end{cases}.
\end{align*}
This concludes the analysis for this case. 
\item [Case 3: $ \exists \; i \; : l_i > s_i$.] By the case definition, we have,
\begin{align*}
    \max_{i \in [K]} \bigg\lceil \frac{l_i}{s_i} \bigg\rceil \geq 2.
\end{align*}
Without loss of generality, we can assume,
\begin{align*}
    \max_{i \in [K]} \bigg\lceil \frac{l_i}{s_i} \bigg\rceil & =  \bigg\lceil \frac{l_1}{s_1} \bigg\rceil.
\end{align*}
Let $t \geq 1$, $t \in \N$ be such that:
\begin{align*}
    t s_1 &< l_1 \leq (t+1) s_1.
\end{align*}
By Lemma \ref{representation_formula_rademacher}, we have,
\begin{align*}
    p_{\pi}(l_1,l_2 \dots l_K) & = \frac{1}{e} \cdot \E \left[ e^{\overline{X}_1^{s_1} \dots \overline{X}_K^{s_K}} \prod_{i=1}^K \prod_{j=1}^{l_i} X_{ij} \right].
\end{align*}
We use the bound:
\begin{align*}
    1 +  \sum_{a=1}^t \frac{x^a}{a!} - \frac{e |x|^{t+1}}{(t+1)!} & \leq e^x \leq 1 + \sum_{a=1}^t \frac{x^a}{a!} + \frac{e |x|^{t+1}}{(t+1)!}, \; |x| \leq 1.
\end{align*}
Hence,
\begin{align*}
    p_{\pi}(l_{1:K}) & \leq  \prod_{i=1}^K \prod_{j=1}^{l_i} \E X_{ij} + \sum_{a=1}^t \frac{1}{a!} \prod_{i=1}^K \E \left[ \overline{X}_i^{a s_i} \prod_{j=1}^{l_i} X_{ij} \right] + \frac{e}{(t+1)!} \prod_{i=1}^K \E[|\overline{X}_i|^{s_i(t+1)}].
 \end{align*}
 Since $l_1 > s_1 \geq 0 \implies l_1 \geq 1$, 
 \begin{align*}
     \prod_{i=1}^K \prod_{j=1}^{l_i} \E X_{ij} & = 0.
 \end{align*}
 For any $a \leq t$, $a s_1 \leq t s_1 < l_1$. Hence by Lemma \ref{basic_bounds},
 \begin{align*}
     \E \left[ \overline{X}_1^{a s_1} \prod_{j=1}^{l_1} X_{ij} \right] = 0 \implies \sum_{a=1}^t \frac{1}{a!} \prod_{i=1}^K \E \left[ \overline{X}_i^{a s_i} \prod_{j=1}^{l_i} X_{ij} \right] = 0.
 \end{align*}
 Finally again by Lemma \ref{basic_bounds},
 \begin{align*}
      p_{\pi}(l_1,l_2 \dots l_K) & \leq \frac{e}{(t+1)!} \prod_{i=1}^K \E[|\overline{X}_i|^{s_i(t+1)}] \leq \frac{C_k}{d^{(t+1)k/2}}.
 \end{align*}
 Note that,
 \begin{align*}
     (t+1) = \max_{i \in K} \bigg \lceil \frac{l_i}{s_i} \bigg \rceil \geq \max_{i \in K} \frac{l_i}{k} = \frac{l_1 \vee l_2 \dots \vee l_K}{k}.
 \end{align*}
 Hence,
 \begin{align*}
      p_{\pi}(l_1,l_2 \dots l_K) & \leq  \frac{C_k}{d^{\frac{l_1 \vee l_2 \dots \vee l_K}{2}}}.
 \end{align*}
 If $l_1 \vee l_2 \dots \vee l_K < 2k$, we can rely on the better bound (since $t \geq 1$):
 \begin{align*}
     p_{\pi}(l_1,l_2 \dots l_K) & \leq  \frac{C_k}{d^{k}}.
 \end{align*}
 Hence, if $ \exists \; i \; : l_i > s_i$, then,
 \begin{align*}
      p_{\pi}(l_1,l_2 \dots l_K) & \leq \begin{cases} C_k \cdot d^{-k} &: l_1 \vee l_2 \dots \vee l_K < 2k \\ C_k \cdot d^{-\frac{l_1 \vee l_2 \dots \vee l_K}{2}} &: l_1 \vee l_2 \dots \vee l_K \geq 2k \end{cases}.
 \end{align*}
\end{description}
Combining all of these cases gives us the claim of the proposition.
\end{proof}


\section{Proof of Theorem \ref{main_result}}
\label{main_result_proof}
In this section we show how the various auxiliary results we have proved so far imply Theorem \ref{main_result}.
\begin{proof}
In order to apply Proposition \ref{feldmans_sq_lb} we need to lower bound $\sdn{(3n)^{-\frac{1}{2}}}$. Lemma \ref{lb_on_sdn} gives us: 
\begin{align}
\label{sdn_lb_eq}
    &\sdn[D_0]{\frac{1}{\sqrt{3n}}}[\mathcal{H}(\pi)]  \nonumber\\&\hspace{2.5cm} \geq \frac{1}{4\sqrt{3 n}} \cdot \left( \sup_{q: \E_{D_0} q^2(\bm T) = 1} \P_{D \sim \unif{\mathcal H}(\pi)} \left[ \left| \E_{D} q(\bm T) - \E_{D_0} q(\bm T) \right| > \frac{1}{6\sqrt{n}}\right] \right)^{-1}.
\end{align}
Let $u \geq 2$ be a integer parameter that we will set suitably later. Proposition \ref{fourier_analytic_prop} gives us, for any $q$ such that $\E_{D_0} q^2(\bm T) = 1$,
\begin{align*}
     &\P_{D \sim \unif{\mathcal{H}(\pi)}} \left[ \left| \E_{D} q(\bm T) - \E_{D_0} q(\bm T) \right| > \frac{1}{6\sqrt{n}}\right]  \\& \hspace{4cm}\leq \left( 36 e n \cdot  \max_{l_1,l_2 \dots l_K \in \W} \left(  (u-1)^{l_1 + l_2 \dots l_K} \cdot p_\pi(l_{1:K})  \right) \right)^{\frac{u}{2}}.
\end{align*}
In order to upper bound $\max_{l_1,l_2 \dots l_K \in \W} \left(  (u-1)^{l_1 + l_2 \dots l_K} \cdot p_\pi(l_1,l_2 \dots l_K)  \right)$, we will utilize Proposition \ref{asymptotic_proposition}:
\begin{description}
\item [When $l_1 \vee l_2 \dots \vee l_K \geq  2k$: ] Proposition \ref{asymptotics} gives us the bound:
\begin{align*}
     (u-1)^{l_1 + l_2 \dots l_K} \cdot p_\pi(l_1,l_2 \dots l_K) & \leq  \frac{C_k \cdot (u-1)^{k l_1 \vee l_2 \dots \vee l_K}}{d^{l_1 \vee \dots l_K/2}} = \left( \frac{(u-1)^k}{\sqrt{d}} \right)^{l_1 \vee \dots l_K }.
\end{align*}
If $d \geq (u-1)^{2k}$, then the upper bound is a decreasing function of $l_1 \vee \dots l_K$ and is maximized at $l_1 \vee \dots l_K = 2k$.  Hence,
\begin{align*}
   \max_{l_1, \dots l_K: l_1 \vee \dots l_K \geq 2k} \;  (u-1)^{l_1 + l_2 \dots l_K} \cdot p_\pi(l_1,l_2 \dots l_K) & \leq \frac{C_k (u-1)^{2k^2}}{d^k}, \; \forall \; d \geq (u-1)^{2k}. 
\end{align*}
\item [When $l_1 \vee l_2 \dots \vee l_K < 2k$: ] Then,
\begin{align*}
     \max_{l_{1:K}: l_1 \vee \dots l_K \leq 2k} \;  (u-1)^{l_1 + l_2 \dots l_K} \cdot p_\pi(l_{1:K}) & \leq (u-1)^{2k^2} \cdot  \max_{l_{1:K}: l_1 \vee \dots l_K \leq 2k} \; p_\pi(l_{1:K}).
\end{align*}
By Proposition \ref{asymptotic_proposition}, when $l_1 \vee \dots l_K \leq 2k$,  the upper bound on $p_\pi(l_1,l_2 \dots l_K)$ is maximized by setting:
\begin{align*}
    l_i & = \begin{cases} 0 &: s_i \text{ is even} \\ 1&: s_i \text{ is odd} \end{cases}.
\end{align*}
Recall that the oddness parameter $\oddness$ is precisely the number $s_i$ that are odd, hence,
\begin{align*}
    \max_{l_1, \dots l_K: l_1 \vee \dots l_K \leq 2k} \;  (u-1)^{l_1 + l_2 \dots l_K} \cdot p_\pi(l_1,l_2 \dots l_K) & \leq \frac{C_k (u-1)^{2k^2}}{\sqrt{{d^{k+\oddness}}}}.
\end{align*}
\end{description}
Hence, we have obtained, 
\begin{align*}
    \max_{l_1,l_2 \dots l_K \in \W} \left(  (u-1)^{l_1 + l_2 \dots l_K} \cdot p_\pi(l_1,l_2 \dots l_K)  \right) & \leq \frac{C_k \cdot (u-1)^{2k^2}}{\sqrt{d^{k+\oddness}}},
\end{align*}
provided $d \geq (u-1)^{2k}$.
Hence,
\begin{align*}
    \sup_{q: \E_{D_0} q^2(\bm T) = 1} \P_{D \sim \unif{\mathcal H(\pi)}} \left[ \left| \E_{D} q(\bm T) - \E_{D_0} q(\bm T) \right| > \frac{1}{6\sqrt{n}}\right] & \leq \left( \frac{C_k \cdot (u-1)^{2k^2} \cdot n}{\sqrt{d^{k+\oddness}}}\right)^{\frac{u}{2}}
\end{align*}
Substituting this in Eq.~\ref{sdn_lb_eq}, we obtain,
\begin{align}
\label{sdn_lb_eq_2}
    \sdn[D_0]{\frac{1}{\sqrt{3n}}}[\mathcal{H}(\pi)] & \geq \frac{1}{8 \sqrt{n}} \cdot \left( \frac{1}{ C_k \cdot  (u-1)^{2k^2}} \cdot \frac{\sqrt{d^{k+\oddness}}}{n} \right)^u,
\end{align}
provided $d \geq (u-1)^{2k}$. Now suppose there exist constants $C_0 \in (0,\infty)$ and $\epsilon \in (0,k)$ such that,
\begin{align*}
    n \leq C_0 d^{\frac{k+\oddness}{2}-\epsilon}.
\end{align*}
Consider any $L \in \N$ and suppose that,
\begin{align}
    d \geq 11 \cdot \left(\frac{4L + k+\oddness}{4 \epsilon} \right)^{2k} \explain{def}{=} C_1(k,L,\epsilon).
\end{align}
We take $u = \epsilon^{-1} \cdot (L + (k+\oddness)/4)$ in Eq.~\ref{sdn_lb_eq_2}. This gives us,
\begin{align*}
    \sdn[D_0]{\frac{1}{\sqrt{3n}}}[\mathcal{H}(\pi)] & \geq c_1(k,L,\epsilon,C_0) \cdot d^L,
\end{align*}
where, 

\begin{align}
    \change{c_1(k,L,\epsilon,C_0) \explain{def}{=} \frac{1}{8} \left( \frac{\epsilon}{(1+C_0^2) \cdot C_k \cdot L}\right)^{\frac{2Lk^3}{\epsilon}}.}
\end{align}

Hence, by Proposition \ref{feldmans_sq_lb}, any  SQ algorithm which solves the Tensor PCA testing problem with $n$ samples must make at least $ c_1(k,L,\epsilon C_0) \cdot d^L$ queries. In order to obtain an estimation lower bound, we appeal to Proposition \ref{feldmans_sq_lb} which lets us  conclude that any  SQ algorithm which solves the Tensor PCA estimation problem with $n$ samples must make at least $ 0.5 \cdot c_1(k,L,\epsilon C_0) \cdot d^L$ queries.

In the case when $\oddness = 0$, we can obtain a stronger estimation lower bound, by applying Proposition \ref{feldmans_sq_lb} with the base measure $\overline{D}(\pi)$. Lemma \ref{lb_on_sdn} and Proposition \ref{fourier_analytic_prop} tell us that $	\sdn[D_\star]{\epsilon}[\mathcal{H}(\pi)]$ can be controlled in terms of the coefficients $\overline{p}_\pi(l_1,l_2 \dots ,l_K)$. Furthermore from Proposition \ref{asymptotic_proposition} that in general,
\begin{align*}
    \overline{p}_\pi(l_1, \dots l_K) & \leq  p_{\pi}(l_1, \dots l_K),
\end{align*}
but when $l_1 = l_2 \dots = l_K = 0$, we have the improved upper bound $\overline{p}_\pi(0,0 \dots , 0) \leq C_k \cdot d^{-k}$ (compared to $p_\pi(0,0, \dots 0) \leq C_k d^{-k/2}$). Consequently the quantity:
\begin{align*}
     \max_{l_1,l_2 \dots l_K \in \W} \left(  (u-1)^{l_1 + l_2 \dots l_K} \cdot \overline{p}_\pi(l_1,l_2 \dots l_K)  \right),
\end{align*}
is maximized by setting one $l_i = 2$ and the remaining ones as $0$. This gives us,
\begin{align*}
    \max_{l_1,l_2 \dots l_K \in \W} \left(  (u-1)^{l_1 + l_2 \dots l_K} \cdot \overline{p}_\pi(l_1,l_2 \dots l_K)  \right) & \leq \frac{C_k \cdot (u-1)^{2k^2}}{\sqrt{d^{k+2}}},
\end{align*}
which results in an improved lower bound:
\begin{align*}
    \sdn[D_\star]{\epsilon}[\mathcal{H}(\pi)]  \geq \frac{1}{8 \sqrt{n}} \cdot \left( \frac{1}{ C_k \cdot  (u-1)^{2k^2}} \cdot \frac{\sqrt{d^{k+2}}}{n} \right)^u,
\end{align*}
from which we can conclude via Proposition \ref{feldmans_sq_lb} that if $n \ll \sqrt{d^{k+2}}$, estimation is impossible in the SQ model with polynomially many queries.
\end{proof}


\section{Optimal Statistical Query Procedures} \label{optimal_SQ_appendix}
In this section we describe SQ procedures that achieve the optimal sample complexity for the Tensor PCA testing and estimation problems. These procedures leverage the ``partial trace'' technique. This approach was first introduced by \citet{hopkins2016fast} who used this technique to compress matrices obtained by flattening higher order tensors in the context of symmetric Tensor PCA with $k=3$ and some other planted inference problems. A different variant of this technique was used by \citet{anandkumar2017homotopy} as a initialization procedure for tensor power method in the context of symmetric Tensor PCA with $k=3$. Recent work by \citet{biroli2019iron} generalized this approach for Symmetric Tensor PCA with arbitrary $k$. 

We show that procedures based on the partial trace technique can be extended to Tensor PCA with partial symmetries and can be implemented in the SQ model via Fact \ref{scalar_mean_estimation_fact}. When implemented in the SQ model, these procedures suffer from a degradation in performance (when compared to the non-SQ performance on symmetric Tensor PCA problems from prior work \citep{hopkins2016fast,anandkumar2017homotopy,biroli2019iron}). However, the lower bounds of the previous section show that this is unavoidable.

We first design an SQ Test for even-order Symmetric Tensor PCA. 
\begin{lemma} [SQ Test for Even Symmetric Tensor PCA]  \label{symmetric_even_SQ_test} If $n \gg d^{\frac{k}{2}}$, there is a SQ algorithm which solves the Symmetric Tensor PCA testing problem with even $k$ with $O(\log(n))$ queries. 
\end{lemma}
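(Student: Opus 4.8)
The plan is to exhibit one explicit, non-adaptive query whose expectation cleanly separates $D_0$ from every distribution in $\mathcal D(\pi)$, and then to estimate that expectation with the scalar mean-estimation wrapper of Fact~\ref{scalar_mean_estimation_fact}. Since $k=2l$ is even and the problem is fully symmetric ($K=1$), every alternative has the form $D_{\bm v^{\otimes k}}$ with $\bm v\in\R^d$, $\|\bm v\|=\sqrt d$. I would use the ``partial trace'' query
\[
q(\bm T)\;=\;\bm T(\bm I_d,\bm I_d,\dotsc,\bm I_d)\;=\;\sum_{j_1,j_2,\dotsc,j_l\in[d]}T_{j_1,j_1,j_2,j_2,\dotsc,j_l,j_l}.
\]
A one-line computation gives $\E_{D_0}q(\bm T)=0$, while for any admissible $\bm v$,
\[
\E_{D_{\bm v^{\otimes k}}}q(\bm T)=\frac{1}{d^{k/2}}\Bigl(\textstyle\sum_{j=1}^d v_j^2\Bigr)^{\!l}=\frac{\|\bm v\|^{2l}}{d^{k/2}}=1,
\]
so the two hypotheses are separated by a unit gap in $\E_D q$.

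The second ingredient is a uniform control of the fluctuations of $q$. The index tuples $(j_1,j_1,\dotsc,j_l,j_l)$ are pairwise distinct as $(j_1,\dotsc,j_l)$ ranges over $[d]^l$, so under every $D\in\mathcal D(\pi)\cup\{D_0\}$ the quantity $q(\bm T)$ is a sum of $d^l=d^{k/2}$ independent unit-variance Gaussian coordinates; hence $\operatorname{Var}_D q(\bm T)=d^{k/2}$ and $\E_D q^2(\bm T)\le 1+d^{k/2}=:B$, both uniformly in $D$. Feeding $q$, the bound $B$, and a fixed tolerance $\xi=\tfrac18$ into Fact~\ref{scalar_mean_estimation_fact} with a $\vstat{D}{n}$ oracle yields an estimate $\hatE_D q(\bm T)$ with
\[
\bigl|\hatE_D q(\bm T)-\E_D q(\bm T)\bigr|\;\le\;8\log(n)\sqrt{\frac{d^{k/2}}{n}}+\frac18
\]
after $3\log(4nB/\xi^2)=O(\log n)$ queries (using $d^{k/2}\le n$ to absorb $\log B$ into $O(\log n)$). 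When $n\gg d^{k/2}$, i.e.\ $n\gtrsim d^{k/2+\varepsilon}$ for some $\varepsilon>0$, the first error term is $O(\log(d)\,d^{-\varepsilon/2})=o(1)$ (uniformly in $n$, since $\log(n)/\sqrt n$ is decreasing on the relevant range), so for all $d$ past a constant the total error is below $\tfrac14$. The tester then declares $D\in\mathcal D(\pi)$ if $\hatE_D q(\bm T)\ge\tfrac12$ and $D=D_0$ otherwise, which is correct under both hypotheses.

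The step I would be most careful about is the only genuine subtlety: $q$ is unbounded and so cannot be queried through the $\vstat{}{n}$ oracle directly, and it is exactly the $\sqrt{\operatorname{Var}_D q/n}$-type guarantee of Fact~\ref{scalar_mean_estimation_fact} that is needed. What makes that guarantee good enough here is that the partial trace compresses the $d^k$ coordinates of $\bm T$ into a single scalar whose standard deviation is only $d^{k/4}$ while preserving a $\Theta(1)$ signal, so the effective signal-to-noise ratio after sample size $n\gg d^{k/2}$ is $\omega(1)$. I would also stress that the variance identity above holds uniformly over the entire composite alternative --- not merely for $\bm v\in\{\pm1\}^d$ --- because the variance of a sum of independent Gaussians is insensitive to their means; this is precisely what lets a single non-adaptive query resolve the whole family at once. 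Everything else is a routine substitution.
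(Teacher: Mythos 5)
Your proposal is correct and follows essentially the same route as the paper: the same partial-trace query $q(\bm T)=\sum_{j_1,\dots,j_l}T_{j_1,j_1,\dots,j_l,j_l}$, the same computation of its mean under each hypothesis, the same invocation of Fact~\ref{scalar_mean_estimation_fact}, and the same $1/2$-threshold tester. (You also correctly identify $\operatorname{Var}_D q(\bm T)=d^{k/2}$, where the paper's proof contains an evident typo writing it as $1$.)
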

\begin{proof}
Let $k = 2l$. Consider the query function:
\begin{align*}
    q(\bm T) & = \sum_{i_1,i_2 \dots i_l} T_{i_1,i_1,i_2,i_2 \dots i_l, i_l}.
\end{align*}
It is easy to check that for any $\bm v$ with $\|\bm v\| = \sqrt{d}$, under $D_{\bm v^{\otimes k}}$, $q(\bm T) \sim \gauss{1}{d^{\frac{k}{2}}}$ where as, under $D_0$, $q(\bm T) \sim \gauss{0}{d^{\frac{k}{2}}}$. Hence for any $D \in \{D_0\} \cup \{D_{\bm v^{\otimes k}}: \bm v \in \R^d, \; \|\bm v\| = \sqrt{d} \}$,
\begin{align*}
    \operatorname{Var}_D q(\bm T)  = 1, \; \E_{D} q^2(\bm T) \leq d^{\frac{k}{2}} + 1.
\end{align*}
Consequently, we can use Fact \ref{scalar_mean_estimation_fact} to construct an SQ procedure which returns an estimate of $\E_D q(T)$ denoted by $\hatE q(\bm T)$ such that,
\begin{align*}
    |\hatE_D q(\bm T) - \E_D q(\bm T) | \rightarrow 0, \text{ as}  \; d \rightarrow \infty,
\end{align*}
if $n \gg d^{\frac{k}{2}}$. This requires only $O(\log(d))$ queries. This can be turned into a test for Tensor PCA by declaring $D = D_0$ if $\hatE_D q(\bm T) \leq 0.5$ and $D \in \{D_{\bm v^{\otimes k}}: \bm v \in \R^d, \; \|\bm v\| = \sqrt{d} \}$ if $\hatE_D q(\bm T) > 0.5$. The correctness of this test follows by the error guarantee of $\hatE_D q(\bm T)$.
\end{proof}

Next we consider testing and estimation in generalized Tensor PCA with partial symmetries. We introduce the notion of the standard form of a labelling function. 

\begin{definition} Recall that for a labelling function $\pi: [K] \rightarrow [k]$, we defined the oddness parameter of $\pi$ (denoted by $\oddness$) as the number of labels in $[K]$ which are used an odd number of times. Given a labelling function $\pi$, we can construct $l \explain{def}{=} (k-\oddness)/2$ pairs from $k-\oddness$ elements of $[k]$ by pairing two indices $i,j \in [k]$ if $\pi(i) = \pi(j)$. The remaining $\oddness$ indices are left unpaired. A labelling function $\pi$ is in standard form if $\{2l + 1, 2l +2 \dots , k\}$ are the unpaired indices and,
\begin{align*}
    \pi(2i - 1) & = \pi(2i), \; i \in \left[l\right]. 
\end{align*}
If $\pi$ is in standard form, then for any $D \in \mathcal{D}(\pi)$, 
\begin{align*}
    \E_D \bm T & = \bm E_1 \otimes \bm E_2 \dots \otimes \bm E_{l} \otimes \bm O
\end{align*}
where $\bm E_1, \dots ,\bm E_l$ are rank-1 symmetric matrices with $\|\bm E_i\| = \Tr(\bm E_i) =  1$ and $\bm O$ is a $\oddness$-order rank 1 tensor with $\|\bm O\| = 1$.
\end{definition}

\begin{lemma} There exists a SQ algorithm which given the knowledge of the labelling function $\pi$ solves the Tensor PCA estimation problem after making at most $O(d^k \cdot \log(n) )$ queries provided the sample size $n$ satisfies:
\begin{align*}
    n  \gg \begin{cases} \sqrt{d^{k+2}} &: \oddness = 0 \\ \sqrt{d^{k+\oddness}} &: \oddness \geq 1 \end{cases}.
    \end{align*}
Furthermore when $\oddness \geq 1$, there is a test based on this estimator that solves Tensor PCA testing problem under the same assumptions on the sample size $n$.
\end{lemma}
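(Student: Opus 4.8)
The plan is to reduce to the case that $\pi$ is in standard form. Since the algorithm is given $\pi$, it may permute the $k$ modes of every queried tensor (a relabeling of entries, hence a valid operation on $\bm T$) and permute its final answer back; as a permutation of modes is a Frobenius isometry, this changes neither the achieved error nor the query count. For $\pi$ in standard form, every $D \in \mathcal{D}(\pi)$ satisfies $\E_D \bm T = \bm E_1 \otimes \dotsb \otimes \bm E_l \otimes \bm O$ with $l = (k-\oddness)/2$, each $\bm E_m$ a rank-one symmetric $d\times d$ matrix with $\|\bm E_m\| = \Tr(\bm E_m) = 1$, and $\bm O$ a rank-one order-$\oddness$ tensor with $\|\bm O\|=1$. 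I would estimate $\bm O$ and the matrices $\bm E_m$ separately, using partial traces together with the scalar mean-estimation routine of Fact~\ref{scalar_mean_estimation_fact}, and then output the tensor product of the pieces.

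\emph{Estimating $\bm O$ (needed only when $\oddness \ge 1$).} For each $(j_1,\dots,j_\oddness)\in[d]^\oddness$ I would use the query
\[
q_{j_1,\dots,j_\oddness}(\bm T) = \sum_{i_1,\dots,i_l\in[d]} T_{i_1,i_1,i_2,i_2,\dots,i_l,i_l,j_1,\dots,j_\oddness}.
\]
Tracing the $m$-th matrix slot multiplies by $\Tr(\bm E_m)=1$, so $\E_D q_{j_1,\dots,j_\oddness}(\bm T) = O_{j_1,\dots,j_\oddness}$, while the query is a sum of $d^l$ distinct tensor entries of variance $1$, so $\operatorname{Var}_D q_{j_1,\dots,j_\oddness}(\bm T) = d^l$ and $\E_D q_{j_1,\dots,j_\oddness}^2(\bm T) \le d^l+1$. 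Running Fact~\ref{scalar_mean_estimation_fact} on these $d^\oddness$ queries with $B = d^l+1$ and a tiny fixed $\xi$ gives $\hat{\bm O}$ with $\|\hat{\bm O}-\bm O\|^2 \lesssim d^\oddness\bigl(\log^2(n)\,d^l/n + \xi^2\bigr)$, which is $o(1)$ once $n \gg \sqrt{d^{k+\oddness}}$ (using $\oddness+l = (k+\oddness)/2$); the cost is $O(d^\oddness\log n)$ queries.

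\emph{Estimating each $\bm E_m$.} Fix $m\in[l]$; for $a,b\in[d]$ I would trace the matrix slots $m'\ne m$, read off entry $(a,b)$ in slot $m$, and contract the order-$\oddness$ block against the (already computed, hence known) vector $\hat{\bm O}/\|\hat{\bm O}\|^2$ — this block is simply absent when $\oddness=0$. Then $\E_D q^{(m)}_{a,b}(\bm T) = (\bm E_m)_{a,b}\cdot \ip{\hat{\bm O}}{\bm O}/\|\hat{\bm O}\|^2$, and Cauchy--Schwarz with $\|\hat{\bm O}-\bm O\| = o(1)$ gives $\ip{\hat{\bm O}}{\bm O}/\|\hat{\bm O}\|^2 = 1+o(1)$ uniformly in $a,b$; the query is a sum of $d^{l-1}$ independent sub-sums, each of variance $\|\hat{\bm O}\|^{-2}\le 4$ (using $\|\hat{\bm O}\|\ge 1/2$ for $d$ large), so $\operatorname{Var}_D q^{(m)}_{a,b}(\bm T) \lesssim d^{l-1}$. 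Feeding the $ld^2$ queries into Fact~\ref{scalar_mean_estimation_fact} gives $\hat{\bm E}_m$ with $\|\hat{\bm E}_m - \bm E_m\| = o(1)$ once $n \gg d^{(k-\oddness)/2+1}$, at a cost of $O(ld^2\log n) = O(d^k\log n)$ queries.

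\emph{Assembly and the crux.} I would output $\hat{\bm V} = \hat{\bm E}_1\otimes\dotsb\otimes\hat{\bm E}_l\otimes\hat{\bm O}$ (dropping the last factor when $\oddness=0$); telescoping with $\|\bm A\otimes\bm B\| = \|\bm A\|\|\bm B\|$, $\|\bm E_m\| = \|\bm O\| = 1$ and $\|\hat{\bm E}_m\|,\|\hat{\bm O}\| \le 1+o(1)$ yields $\|\hat{\bm V}-\E_D\bm T\| \le \sum_m (1+o(1))\|\hat{\bm E}_m-\bm E_m\| + (1+o(1))\|\hat{\bm O}-\bm O\| = o(1) < 1/4$, meeting Definition~\ref{estimator_def}. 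Comparing the two thresholds, $(k+\oddness)/2 \ge (k-\oddness)/2+1$ exactly when $\oddness\ge 1$, so the binding constraint is $n\gg\sqrt{d^{k+\oddness}}$ for $\oddness\ge1$ and $n\gg\sqrt{d^{k+2}}$ for $\oddness=0$ (where only the $\bm E_m$-step runs, with $l=k/2$). For the testing claim ($\oddness\ge1$) I would run only the $\bm O$-step and declare ``$D\in\mathcal{D}(\pi)$'' iff $\|\hat{\bm O}\|>1/2$: under $D_0$ those queries have mean $\bm 0$ so $\|\hat{\bm O}\| = o(1)$, whereas $\|\bm O\|=1$ under $\mathcal{D}(\pi)$ forces $\|\hat{\bm O}\| \ge 1-o(1)$. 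The only genuinely delicate part is the variance accounting: one must verify that the partial-trace and trace-and-contract queries are sums of honestly distinct tensor entries (so that variances add rather than compound), and that contracting against the noisy $\hat{\bm O}$ instead of the true $\bm O$ neither inflates the variance nor biases the $\bm E_m$-estimates by more than $o(1)$ — both consequences of $\|\hat{\bm O}-\bm O\| = o(1)$ and Cauchy--Schwarz; everything else is a routine application of Fact~\ref{scalar_mean_estimation_fact} and the telescoping product bound.
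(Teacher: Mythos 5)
Your proof is correct and takes essentially the same approach as the paper's: reduce to standard form, estimate the odd-order block $\bm O$ entry-wise via partial traces, estimate each $\bm E_m$ entry-wise by contracting the remaining odd modes against $\hat{\bm O}$, and assemble the estimate by tensor product, with the testing claim for $\oddness\ge1$ read off from $\|\hat{\bm O}\|$. The only cosmetic difference is the normalization $\hat{\bm O}/\|\hat{\bm O}\|^2$ in your $\bm E_m$-queries versus $\hat{\bm O}/\|\hat{\bm O}\|$ in the paper's; both introduce a harmless $1+o(1)$ multiplicative bias and give the same variance up to constants.
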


\begin{proof}
Since the algorithm knows the labelling function $\pi$, we can assume, without loss of generality that $\pi$ is in standard form since by applying a suitable permutation to the modes of the tensor, the algorithm can always convert the Tensor PCA problem to the standard form. We define $l \explain{def}{=} (k-\oddness)/2$, where $\oddness$ is the number of labels used an odd number of times by $\pi$. We first consider the Tensor PCA estimation problem. Recall that by the definition of standard form, for any $D \in \mathcal{D}(\pi)$,
\begin{align*}
     \E_D \bm T & = \bm E_1 \otimes \bm E_2 \dots \otimes \bm E_{l} \otimes \bm O, \; \forall \; D \; \in \mathcal{D}(\pi).
\end{align*}
\begin{description}
\item [Estimating $\bm O$: ] We construct an estimator for $\bm O$ by estimating it entry-wise. Consider the query functions:
\begin{align*}
    q_{i_1,i_2 \dots i_\oddness}(\bm T) & = \sum_{j_1,j_2 \dots , j_l}  \bm T_{j_1,j_1,j_2,j_2 \dots j_l,j_l, i_1,i_2 \dots i_{\oddness}}.
\end{align*}
For any $D \in \mathcal{D}(\pi)$
\begin{align*}
    q_{i_1,i_2 \dots i_\oddness}(\bm T) \sim \gauss{\Tr(\bm E_1) \cdot \Tr(\bm E_2) \cdot  \dots \Tr(\bm E_l) O_{i_1,i_2 \dots ,i_{\oddness}}}{d^{l}} = \gauss{O_{i_1,i_2 \dots ,i_{\oddness}}}{d^{l}}.
\end{align*}
Hence using Fact \ref{scalar_mean_estimation_fact}, one can obtain estimates such that,
\begin{align*}
    \left|\hatE_D  q_{i_1,i_2 \dots i_\oddness}(\bm T) - \E_D  q_{i_1,i_2 \dots i_\oddness}(\bm T) \right| & \leq  9 \cdot \log(n) \cdot \sqrt{\frac{d^l}{n}}, \; \forall \; D \; \in \;  \mathcal{D}(\pi).
\end{align*}
Now we construct an estimate of $\bm O$, denoted by $\hat{\bm O}$ with $\hat{O}_{{i_1,i_2 \dots i_\oddness}} = \hatE_D  q_{i_1,i_2 \dots i_\oddness}(\bm T) $. This estimate has the error guarantee:
\begin{align*}
    \|\hat{\bm O} - \bm O\|^2 & = \sum_{i_1,i_2 \dots i_\oddness} (\hat{O}_{i_1,i_2 \dots i_\oddness} - O_{i_1,i_2 \dots ,i_\oddness})^2 \\
    & \leq 81 \cdot \log^2(n) \cdot \frac{d^{l+\oddness}}{n} = 81 \cdot \log^2(n) \cdot \frac{\sqrt{d^{k+\oddness}}}{n}. 
\end{align*}
Under the sample size assumption, $\|\hat{\bm O} - \bm O\| \rightarrow 0$ as $d \rightarrow \infty$. This further implies $\|\hat{\bm O}\| \rightarrow \|\bm O\| = 1$ and, $\ip{\hat{\bm O}}{\bm O} \rightarrow 1$. Note that when $\oddness = 0$, we can simply skip this step. 
\item [Estimating $\bm E_{1:l}$] We estimate $\bm E_1$ entry-wise by using the queries:
\begin{align*}
    q_{ab}(\bm T) & = \frac{1}{\|\hat{\bm O}\|} \cdot \sum_{j_1,j_2,\dots j_{l-2}, i_1,i_2 \dots i_\oddness} T_{a,b,j_1,j_1,j_2,j_2 \dots ,j_l,j_l, i_1,i_2 \dots i_\oddness} \cdot \hat{O}_{i_1,i_2 \dots i_\oddness}, \; a,b \in [d].
\end{align*}
Under $D \in \mathcal{D}(\pi)$, 
\begin{align*}
    q_{ab}(\bm T) & \sim \gauss{(E_1)_{a,b} \cdot \frac{\ip{\bm O}{\hat{\bm O}}}{\|\hat{\bm O}\|}}{d^{l-1}}.
\end{align*}
Hence using Fact \ref{scalar_mean_estimation_fact}, we can obtain estimates $\hatE_{D} q_{ab}(\bm T)$ with the guarantee:
\begin{align*}
    \left|\hatE_D  q_{ab}(\bm T) - \E_D  q_{ab}(\bm T) \right| & \leq  9 \cdot \log(n) \cdot \sqrt{\frac{d^{l-1}}{n}}, \; \forall \; D \; \in \;  \mathcal{D}(\pi).
\end{align*}
We construct an estimator of $\bm E_1$, denoted by $\hat{\bm E}_1$ with entries defined by $(\hat{E}_1)_{a,b} = \hatE_D q_{ab}(\bm T)$.  The error guarantee of this estimator is given by:
\begin{align*}
    \left\|\hat{\bm E}_1 -  \frac{\ip{\bm O}{\hat{\bm O}}}{\|\hat{\bm O}\|} \bm E_1\right\|^2 & = 81 \cdot \log^2(n) \cdot \frac{d^{l+1}}{n} = 81 \cdot \log^2(n) \cdot \frac{\sqrt{d^{k-\oddness + 2}}}{n}.
\end{align*}
By Triangle Inequality,
\begin{align*}
    \left\|\hat{\bm E}_1 -   \bm E_1\right\| & \leq \left\|\hat{\bm E}_1 -  \frac{\ip{\bm O}{\hat{\bm O}}}{\|\hat{\bm O}\|} \bm E_1\right\| + \left|1 - \frac{\ip{\bm O}{\hat{\bm O}}}{\|\hat{\bm O}\|} \right| = 9 \cdot \log(n) \cdot \left( \frac{\sqrt{d^{k-\oddness + 2}}}{n}\right)^{\frac{1}{2}} + o_d(1).
\end{align*}
When $\oddness = 0$, the sample size assumption $ n \gg \sqrt{d^{k+2}}$ guarantees $\|\bm E_1 - \hat{\bm E}_1 \| \rightarrow 0$ as $d \rightarrow \infty$. When $\oddness \geq 1$ since $k-\oddness + 2 \leq k + \oddness$, the assumption $n \gg \sqrt{d^{k + \oddness}} \implies n \gg \sqrt{d^{k-\oddness + 2}}$ and hence, $\|\bm E_1 - \hat{\bm E}_1 \| \rightarrow 0$. By a similar procedure, we can also estimate $\bm E_2, \bm E_3 \cdots \bm E_l$. Hence we have obtained estimators $\hat{\bm E}_{1:l}$ such that,
\begin{align*}
    \|\hat{\bm E}_i - \bm E_i\| \rightarrow 0, \; \|\hat{\bm E_i}\| \rightarrow 1, \; \ip{\bm E_i}{\hat{\bm E}_i} \rightarrow 1 \text{ as } d \rightarrow \infty. 
\end{align*}
\item [Estimating $\E_D q(\bm T)$] We construct an estimator of $\E_D \bm T$ as:
\begin{align*}
    \hatE_D \bm T &  = \frac{\hat{\bm E_1}}{\|\hat{\bm E_1}\|} \otimes \frac{\hat{\bm E_2}}{\|\hat{\bm E_2}\|} \dots \otimes \frac{\hat{\bm E_l}}{\|\hat{\bm E_l}\|} \otimes \frac{\hat{\bm O}}{\|\hat{\bm O}\|}.
\end{align*}
The error of this estimator can be computed as:
\begin{align*}
    \|\hatE_D \bm T - \E_D \bm T\|^2 & = 1 + 1 - 2 \cdot \left( \prod_{i=1}^l \frac{\ip{\hat{\bm E_i}}{\bm E_i}}{\|\hat{\bm E_i}\|} \right) \cdot \frac{\ip{\hat{\bm O}}{\bm O}}{\|\hat{\bm O}\|} \rightarrow 0 \text{ as } d \rightarrow \infty.
\end{align*}
\end{description}
This proves the claim of the lemma for estimation. For testing we observe that we only need to consider the case $\oddness \geq 1$ since Lemma \ref{symmetric_even_SQ_test} already handles the case $\oddness = 0$. Hence for any $D \in \mathcal{D}(\pi)$, the estimator $\hat{\bm O}$ satisfies $\|\hat{\bm O} \| \rightarrow 1$. When $D = D_0$, the same analysis can be repeated to show $\|\hat{\bm O}\| \rightarrow 0$. Hence we construct a test by declaring $D = D_0$ if $\|\hat{\bm O}\| \leq 0.5$ and $D \in \mathcal{D}(\pi)$ if $\|\hat{\bm O}\| > 0.5$. This test solves the Tensor PCA testing problem for large enough $d$.
\end{proof}


\section{Poisson Random Tensors}
This section is devoted to the proof of Proposition \ref{poisson_tensor_proposition} which describes a certain conditional independence structure in Poisson random tensors. We first collect some standard facts regarding the Multinomial and the Poisson distributions which will be useful for our analysis.
\label{Poisson_tensors_appendix}
\begin{definition}[Multinomial Distribution] \label{multinomial_distribution_definition} For any $n \in \N, k \in \N$, the Multinomial Distribution with parameters $(n,k)$ (denoted by $\mult{n}{k}$) is the distribution supported on the set:
\begin{align*}
\supp{\mult{n}{k}} & = \left\{ \bm x \in \R^k: \forall \; i \in [k], \; x_i \in \W, \; \sum_{i=1}^k x_i = n \right\},
\end{align*}
with the probability mass function given by:
\begin{align*}
\P \left( X_1 = x_1, X_2 = x_2 \dots , X_k = x_k \right) & = \binom{n}{x_1, x_2 \dots , x_k} \cdot  \frac{1}{k^{n}}, \; \forall \; \bm x  \in \supp{\mult{n}{k}}.
\end{align*}
\end{definition}
\begin{fact}[Properties of the Multinomial Distribution] \label{multinomial_distribution_properties} Let $\bm X \sim \mult{n}{k}$ and $\bm Y \sim \mult{m}{k}$ be two independent Multinomial random vectors. Then we have,
\begin{enumerate}
	\item The generating function of $\bm X$ is given by: 
	\begin{align*}
	\E \left[ z_1^{X_1} z_2^{X_2} \dots z_k^{X_k} \right] & = \left( \frac{\sum_{i=1}^k z_i}{k} \right)^n, \; \forall \bm z \in \C^k.
	\end{align*}
	\item The sum $\bm X + \bm Y$ is also a Multinomial random vector with $\bm X + \bm Y \sim \mult{n+m}{k}$. 
\end{enumerate}
\end{fact}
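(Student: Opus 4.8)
The plan is to prove both parts directly from the combinatorial description of the Multinomial distribution recorded in Definition~\ref{multinomial_distribution_definition}: a vector $\bm X \sim \mult{n}{k}$ is the vector of counts recording how often each face $1, \dotsc, k$ appears when a fair $k$-faced die is thrown $n$ times independently.

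For part~1, I would compute the generating function in two equivalent ways. The quickest is to write $\bm X = \sum_{j=1}^n \bm e_{D_j}$, where $D_1, \dotsc, D_n$ are i.i.d.~uniform on $[k]$ and $\bm e_i$ is the $i$-th standard basis vector of $\R^k$; then $z_1^{X_1} z_2^{X_2} \dotsm z_k^{X_k} = \prod_{j=1}^n z_{D_j}$, and independence of the $D_j$ gives $\E[\prod_{j=1}^n z_{D_j}] = \prod_{j=1}^n \E[z_{D_j}] = \bigl( \tfrac{1}{k}\sum_{i=1}^k z_i \bigr)^n$. Alternatively, one substitutes the probability mass function from Definition~\ref{multinomial_distribution_definition} and recognizes the sum $\sum_{\bm x \in \supp{\mult{n}{k}}} \binom{n}{x_1, \dotsc, x_k} z_1^{x_1}\dotsm z_k^{x_k} = (z_1 + \dotsb + z_k)^n$ by the multinomial theorem, which after dividing out the factor $k^{-n}$ gives the claimed formula.

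For part~2, I would again invoke the die-throwing description: $\bm X$ is the count vector of $n$ throws and $\bm Y$ is the independent count vector of $m$ throws, so $\bm X + \bm Y$ is the count vector of the pooled sequence of $n+m$ independent throws, which is by definition distributed as $\mult{n+m}{k}$. A purely analytic alternative, avoiding the construction, is to note that by independence and part~1, $\E[z_1^{X_1+Y_1}\dotsm z_k^{X_k+Y_k}] = \E[z_1^{X_1}\dotsm z_k^{X_k}] \cdot \E[z_1^{Y_1}\dotsm z_k^{Y_k}] = \bigl(\tfrac{1}{k}\sum_i z_i\bigr)^{n+m}$, which is exactly the generating function of $\mult{n+m}{k}$; since $\bm X + \bm Y$ takes values in the finite set $\supp{\mult{n+m}{k}}$, its law is determined by this polynomial generating function. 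There is no real obstacle here; the only point deserving a sentence of care is this last one — that a generating function determines a distribution supported on a known finite set — which is immediate since matching both sides as polynomials in $(z_1,\dotsc,z_k)$ forces equality of the corresponding probabilities.
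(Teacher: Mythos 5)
Your proof is correct. Note that the paper itself does not prove Fact~\ref{multinomial_distribution_properties}; it is stated without proof as a standard property, so there is no paper proof to compare against. Your two derivations — the representation $\bm X = \sum_{j=1}^n \bm e_{D_j}$ with independent die throws for part~1, and the pooled-throws (or, equivalently, the generating-function) argument for part~2 — are both standard and entirely sound, and your closing remark that a polynomial generating function determines a distribution on a known finite support is exactly the right level of care.
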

\begin{definition}[Poisson Distribution] \label{poisson_distribution_definition} The Poisson distribution with parameter $\lambda \geq 0$ denoted by $\pois{\lambda}$ is the distribution on $\W$ with the probability mass function:
\begin{align*}
\P \left( X = x \right) & = \frac{e^{-\lambda} \lambda^{x}}{x!}.
\end{align*}
\end{definition}
\begin{fact}[Properties of the Poisson Distribution] \label{poisson_distribution_properties} Let $X, X_1, X_2 \dots X_k$ be i.i.d. $\pois{\lambda}$ distributed random variables. Then,
\begin{enumerate}
	\item Generating Function of $\pois{\lambda}$: We have,
	\begin{align*}
	\E z^X & = e^{\lambda(z-1)}, \; \forall \; z \in \C.
	\end{align*}
	\item The sum $T = \sum_{i=1}^k X_i$ is also a Poisson random variable with $T \sim \pois{k\lambda}$. 
	\item The conditional distribution of the random vector $(X_1, X_2 \dots ,X_k)$ given $T$ is multinomial:
	\begin{align*}
	(X_1, X_2 \dots ,X_k) \; \big| \; T & \sim \mult{T}{k}.
	\end{align*}
\end{enumerate}
\end{fact}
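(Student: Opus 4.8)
The plan is to prove the three claims in sequence, each by an elementary computation from the probability mass function in Definition~\ref{poisson_distribution_definition}, with part~1 serving as the main tool for parts~2 and~3. For the generating function, I would write $\E z^X = \sum_{x=0}^\infty z^x \cdot e^{-\lambda}\lambda^x/x! = e^{-\lambda}\sum_{x=0}^\infty (\lambda z)^x/x!$; the series on the right is the exponential series, which converges absolutely for every $z \in \C$, so the term-by-term evaluation is justified and $\E z^X = e^{-\lambda}e^{\lambda z} = e^{\lambda(z-1)}$.

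For the sum $T = \sum_{i=1}^k X_i$, independence makes the generating function multiplicative: $\E z^T = \prod_{i=1}^k \E z^{X_i} = (e^{\lambda(z-1)})^k = e^{k\lambda(z-1)}$, which by part~1 is exactly the generating function of $\pois{k\lambda}$. Since the power-series coefficients of a generating function determine a distribution on $\W$, this forces $T \sim \pois{k\lambda}$. For the conditional law, fix $n \in \W$ and $\bm x \in \supp{\mult{n}{k}}$, so that $x_i \in \W$ for all $i$ and $x_1 + \cdots + x_k = n$; the event $\{T = n\}$ is precisely $\{X_1 + \cdots + X_k = n\}$, so by independence and part~2,
\begin{align*}
\P(X_1 = x_1, \dots, X_k = x_k \mid T = n) &= \frac{\prod_{i=1}^k e^{-\lambda}\lambda^{x_i}/x_i!}{e^{-k\lambda}(k\lambda)^n/n!} \\
&= \frac{n!}{x_1! \cdots x_k!}\cdot\frac{1}{k^n} = \binom{n}{x_1, \dots, x_k}\frac{1}{k^n},
\end{align*}
using $x_1 + \cdots + x_k = n$ to collapse the power of $\lambda$. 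This matches the pmf of $\mult{n}{k}$ in Definition~\ref{multinomial_distribution_definition}, so $(X_1,\dots,X_k)\mid T \sim \mult{T}{k}$.

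There is no real obstacle here, as these are classical identities; I would only take care over two bookkeeping points. The first is the justification that a probability generating function (equivalently, the collection of its Taylor coefficients at $0$) determines the distribution on $\W$, which is what licenses the conclusion $T \sim \pois{k\lambda}$ in part~2 from the computed form of $\E z^T$. The second is the support constraint $x_1 + \cdots + x_k = n$ in part~3: it is this constraint, inherited from conditioning on $\{T = n\}$, that makes the numerator's power of $\lambda$ equal $\lambda^n$ and cancel cleanly against the denominator, leaving exactly the multinomial coefficient.
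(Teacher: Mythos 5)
Your proof is correct: the series computation for the generating function, the multiplicativity-plus-uniqueness argument for the sum, and the direct pmf ratio (using $x_1+\cdots+x_k=n$ to cancel the powers of $\lambda$ and produce $\binom{n}{x_1,\dots,x_k}k^{-n}$, matching Definition~\ref{multinomial_distribution_definition}) are all sound. The paper states this as a standard fact and offers no proof of its own, so there is nothing to compare against; your argument is exactly the classical one being implicitly invoked.
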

We now prove the conditional independence property of Poisson tensors (Proposition \ref{poisson_tensor_proposition}) which we have restated below for convenience.  Let $\bm C$ be a random tensor in $\tensor{\bm W^d}{k}$ whose entries are sampled independently as follows:
\begin{align*}
    C_{i_1,i_2, \dots i_k} & \explain{i.i.d.}{\sim} \pois{\frac{1}{d^k}}.
\end{align*}
\poistensorprop*
\begin{proof} The claim that $ \|\bm C\|_1 \sim \pois{1}$ is a direct consequence of the additivity property of the Poisson Distribution (cf.~Fact \ref{poisson_distribution_properties}.3). We will prove the claim about the conditional distribution of $\Tsum{\bm C}{i}$ given $T$ by induction on the order of the tensor, $k$. 

\paragraph{Base Case, $k=2$:} Note that in this case $\bm C$ is a $d \times d$ matrix.  $\Tsum{\bm C}{1}$ is the vector of row-wise sums and $\Tsum{\bm C}{2}$ are the column wise sums. We first characterise the distribution of $\Tsum{\bm C}{1} \; | \; \|\bm C\|_1$. Due to the additivity property of the Poisson distribution (cf.~Fact \ref{poisson_distribution_properties}.2),
\begin{align*}
 \Tsum{\bm C}{1}_i & \explain{i.i.d.}{\sim} \pois{\frac{1}{d}}, \; \forall \; i \in [d]. 
\end{align*}
Since $\|\bm C\|_1 = \sum_{i=1}^d \Tsum{\bm C}{1}_i$, appealing to the Poisson-Multinomial connection (cf.~Fact \ref{poisson_distribution_properties}.3), we have,
\begin{align*}
\Tsum{\bm C}{1} \; | \; \|\bm C\|_1 & \sim \mult{\|\bm C\|_1}{d}.
\end{align*}
Next we characterize the conditional distribution of $\Tsum{\bm C}{2} \; | \; \|\bm C\|_1,  \Tsum{\bm C}{1}$. We first observe that,
\begin{align*}
(C_{i,1}, C_{i,2} \dots C_{i,d}) \; | \; \|\bm C\|_1 , \Tsum{\bm C}{1} & \explain{d}{=} (C_{i,1}, C_{i,2} \dots C_{i,n}) \; |\; \Tsum{\bm C}{1}_i \\
& \sim \mult{\Tsum{\bm C}{1}_i}{d}.
\end{align*}
In the last step, we again appealed to the Poisson-Multinomial connection (Fact \ref{poisson_distribution_properties}.3). Hence by the additivity property of the Multinomial distribution (Fact \ref{multinomial_distribution_properties}.2), we obtain,
\begin{align*}
\Tsum{\bm C}{2} \; | \; \|\bm C\|_1 , \Tsum{\bm C}{1} & = \sum_{i=1}^n (C_{i,1}, C_{i,2} \dots C_{i,n}) \; \bigg| \; \|\bm C\|_1 , \Tsum{\bm C}{1} \\
& \sim \mult{ \sum_{i=1}^n \Tsum{\bm C}{1}_i}{d} \\
& = \mult{\|\bm C\|_1}{d}.
\end{align*}
Hence,
\begin{align*}
\Tsum{\bm C}{1} \; | \; \|\bm C\|_1  \sim \mult{\|\bm C\|_1}{d}, \; \Tsum{\bm C}{2} \; | \; \|\bm C\|_1 , \Tsum{\bm C}{1} \sim \mult{\|\bm C\|_1}{d},
\end{align*}
which proves the proposition for $k=2$. 

\paragraph{Induction Hypothesis:} We assume the claim of the proposition holds for Poisson tensors of order $k-1$.

\paragraph{Induction Step:} We now prove the proposition for Tensors of order $k$. Since we plan to appeal to the induction hypothesis, we define a $k-1$ order tensor $\tilde{\bm{ C}}$ by summing over mode $k$ of $\bm C$:
\begin{align*}
\tilde{C}_{j_1,j_2 \dots j_{k-1}} & = \sum_{j_k} C_{j_1,j_2 \dots j_k},\; \forall  \; j_1, j_2 \dots j_{k-1} \in [d].
\end{align*}
We note that,
\begin{align*}
   \| \bm C\|_1 =\|\tilde{\bm C}\|_1, \; \Tsum{\bm C}{i} = \Tsum{\tilde{\bm C}}{i}, \; \forall \; i \; \in \; [k-1]. 
\end{align*}Hence by the induction hypothesis, we have,
\begin{align*}
\Tsum{\bm C}{i} \; | \; |\bm C\|_1 & \explain{i.i.d.}{\sim}  \mult{\|\bm C\|_1}{d}, \; \forall \; i \in [k-1].
\end{align*}
We now just need to characterize the distribution of $$\Tsum{\bm C}{k} \; | \; \|\bm C\|_1, \Tsum{\bm C}{1}, \Tsum{\bm C}{2} \dots \Tsum{\bm C}{k-1}.$$ We first study its distribution conditioned on a larger $\sigma$-algebra: $\Tsum{\bm C}{k} | \bm \tilde{\bm C}$. Observe that,
\begin{align*}
&\left(C_{j_1, \dots j_{k-1},1}, C_{j_1,  \dots j_{k-1},2}, \dots C_{j_1,  \dots j_{k-1},d} \right) \; \bigg| \; \tilde{\bm C} \\& \hspace{5cm} \explain{d}{=} \left(C_{j_1, j_2, \dots j_{k-1},1}, C_{j_1, j_2, \dots j_{k-1},2}, \dots C_{j_1, j_2, \dots j_{k-1},d} \right) \; \bigg| \; \tilde{C}_{j_1, \dots j_{k-1}} \\
&\hspace{5cm} \sim \mult{\tilde{C}_{j_1,j_2 \dots j_{k-1}}}{d}.
\end{align*}
In the last step we used the Multinomial-Poisson connection (Fact \ref{poisson_distribution_properties}.3). Hence, by additivity of Multinomial distributions (Fact \ref{multinomial_distribution_properties}.2),
\begin{align*}
\Tsum{\bm C}{k} \; | \; \bm \tilde{\bm C} & = \sum_{j_1,j_2 \dots j_{k-1}} \left(C_{j_1, \dots j_{k-1},1}, C_{j_1,  \dots j_{k-1},2}, \dots C_{j_1,  \dots j_{k-1},d} \right) \; \bigg| \; \tilde{\bm C} \\
& \sim \mult{\sum_{j_1,j_2 \dots j_{k-1}}\tilde{C}_{j_1,j_2 \dots j_{k-1}}}{d} \\
& = \mult{\|\bm C\|_1}{d}.
\end{align*}
This shows that,
\begin{align*}
\Tsum{\bm C}{k} \; | \; \|\bm C\|_1, \Tsum{\bm C}{1}, \Tsum{\bm C}{2} \dots \Tsum{\bm C}{k-1} & \sim \mult{\|\bm C\|_1}{d}.
\end{align*}
This concludes the induction step and the proof of the proposition. 
\end{proof}


\section{Proof of Proposition \ref{feldmans_sq_lb} and Lemma \ref{lb_on_sdn}}
\label{lb_on_sdn_appendix}
This section is dedicated to missing proofs from Appendix \ref{framework_appendix} on the framework for proving SQ lower bounds. 
\subsection{Proof of Proposition \ref{feldmans_sq_lb}}
\begin{proof}
As mentioned previously, the claim regarding the testing problem is simply Theorem 7.1 of \citet{feldman2018complexity}. The proof of the claim for estimation is a modification of their proof. We define $B$ as:
\begin{align*}
    B &\explain{def}{=} \sdn[D_\star]{(3n)^{-\frac{1}{2}}}[\mathcal{H}(\pi)].
\end{align*}
We will prove the contrapositive of this statement: For any SQ algorithm $\mathcal{A}$ which makes $t < B/2$ queries, we will exhibit a $D \in \mathcal{H}(\pi)$ and a corresponding $\vstat{D}{n}$ oracle which when used to answer queries made by $\mathcal{A}$ results in $\mathcal{A}$ outputting an estimator $\hat{\bm V}$ whose error satisfies $\|\hat{\bm V} - \E_{D} q(\bm T)\| \geq 0.5$ and hence $\mathcal{A}$ fails to solve the Tensor PCA estimation in the sense of Definition \ref{estimator_def}.

We begin by specifying the adversarial $\vstat{D}{n}$ oracle: This oracle avoids committing to a specific $D \in \mathcal{H}(\pi)$ in the beginning and simply responds to any query $q$ by the estimate $\E_{D_\star} q (\bm T)$. After the learning algorithm has exhausted its query budget by asking a sequence of queries $q_1, q_2 \dots , q_t$ and declares an estimator $\hat{\bm V}$, the oracle chooses a $D \in \mathcal{H}(\pi)$ to maximize the error of the estimate subject to the constraint that the responses are valid $\vstat{D}{n}$ responses. 
Define the sets:
\begin{align*}
    A_i & \explain{def}{=} \left\{ (\bm v_1, \bm v_2 \cdots \bm v_K): \bm v_j \in \{\pm 1\}^d  ,  \left| p_i(\bm v_{1:K}) - \E_{\mathcal{D}_\star} q_i(\bm T) \right| > \frac{1}{n} \vee \sqrt{\frac{p_i(\bm v_{1:K})-p_i^2(\bm v_{1:K})}{n}} \right\},
\end{align*}
where,
\begin{align*}
    p_i(\bm v_1, \bm v_2 \cdots \bm v_K) & \explain{def}{=} \E_{D_{\bm v_{\pi(1)} \cdots \otimes \bm v_{\pi(k)}}} q_i(\bm T).
\end{align*}
Then by the definition of a $\vstat{D}{n}$ oracle (Definition \ref{VSTAT_def}), for any $D$ such that:
\begin{align*}
    D \in \mathcal{H}(\pi) \setminus \bigcup_{i=1}^t A_i,
\end{align*}
the above specified adversarial oracle is a valid $\vstat{D}{n}$ oracle. We make the following claims:

\paragraph{Claim 1:} For all $i \in [t], \; |A_i| \leq 2^{dK}/B$.

\paragraph{Claim 2:} There exist $D_1, D_2$ such that, $D_1,D_2  \in \mathcal{H}(\pi) \setminus \bigcup_{i=1}^t A_i$ \&  $\| \E_{D_1} \bm T - \E_{D_2} \bm T \| \geq 1$.

We first assume the above claims are true and prove Proposition \ref{feldmans_sq_lb}. By the triangle inequality,
\begin{align*}
    \| \E_{D_1} \bm T - \E_{D_2}\bm T \|  \leq \| \E_{D_1} \bm T - \hat{\bm V} \| + \| \E_{D_2} \bm T - \hat{\bm V} \| \leq 2 \max(\| \E_{D_1} \bm T - \hat{\bm V} \| , \| \E_{D_2} \bm T - \hat{\bm V} \|).
\end{align*}
Appealing to Claim 2, we obtain,
\begin{align*}
    \max(\| \E_{D_1} \bm T - \hat{\bm V} \| , \| \E_{D_2} \bm T - \hat{\bm V} \|) & \geq \frac{1}{2}.
\end{align*}
Hence, we have found a $D \in \mathcal{D}(\pi)$ and a valid $\vstat{D}{n}$ oracle such that,
\begin{align*}
    \|\hat{\bm V} - \E_{D} \bm T\| & \geq \frac{1}{2},
\end{align*}
which was the claim of Proposition 2. Finally we provide a proof of the claims made above. 

\paragraph{Proof of Claim 1:} Exactly the same claim has been proved in the proof of Theorem 7.1 in \citet{feldman2018complexity}, so we refer the reader to this paper for the proof. 

\paragraph{Proof of Claim 2:} As the SQ algorithm makes more and more queries, the set of valid choices of $D$ for the oracle becomes smaller. We can track the set of valid choices of $D$ and the relative distances between them by a sequence of undirected graphs. The initial graph $G_0 = (V_0,E_0)$ has the vertex set $$V_0 =\{(\bm v_1, \bm v_2 \cdots \bm ,v_K): \bm v_j \in \{\pm 1\}^d, \; \forall \; j \; \in \; [K] \},$$ and the edge set:
\begin{align*}
    E_0 & = \{[(\bm u_1, \dots , \bm u_K), (\bm v_1,  \dots , \bm v_K)]: \bm u_i, \bm v_i \in \{\pm 1\}^d, |\ip{\bm u_i}{\bm v_i}| \leq 2^{-\frac{1}{k}}\cdot d \; \forall \; i \in [K]\}.
\end{align*}
Note that for any two connected vertices $(\bm u_1, \dots , \bm u_K)$ and $(\bm v_1,  \dots , \bm v_K)$:
\begin{align*}
    \|\E_{D_{\bm u_{\pi(1)} \dots \otimes \bm u_{\pi(k)}}} \bm T - \E_{D_{\bm v_{\pi(1)} \dots \otimes \bm v_{\pi(k)}}} \bm T\|^2 & = \frac{1}{d^k} \| \bm u_{\pi(1)} \dots \otimes \bm u_{\pi(k)}  - \bm v_{\pi(1)} \dots \otimes \bm v_{\pi(k)}\|^2 \\
    & = 2 - 2 \prod_{i=1}^k \frac{\ip{\bm u_{\pi(i)}}{\bm v_{\pi(i)}}}{d} \\
    & \geq 2 - 2 \cdot \left(\frac{1}{2^{\frac{1}{k}}}\right)^k = 1.
\end{align*}
Due to the underlying symmetry between the vertices, it is easy to check that $G_0$ is a $r$-regular graph for some $r \geq 1$. Hence $|E_0| = |V_0| \cdot r/2 = r \cdot 2^{Kd-1}$. After every query $q_t$, we update the graph as follows: The new graph $G_i = (V_i, E_i)$ where $V_i = V_{i-1} \setminus A_i $ and $E_{i}$ is the set of remaining edges after all edges which involved a vertex in $A_i$ are removed from $E_{i-1}$. Since each vertex has at most $r$ edges, we have,
\begin{align*}
    |E_{i-1}| - |E_{i}| & \leq r |A_i|  \explain{Claim 1}{\leq } \frac{r 2^{dK}}{B}.
\end{align*}
Hence,
\begin{align*}
    |E_0| - |E_{t}| & \leq \frac{r \cdot t \cdot 2^{dK}}{B}
\end{align*}
Since $t < B/2$, $|E_t| > |E_0| - r \cdot 2^{dK-1} = 0$. Hence $|E_t| \geq 1$, that is, the graph after $t$ queries contains at least one remaining edge. The vertices corresponding to this edge yield distributions $D_1,D_2$ satisfying Claim 2. 
\end{proof}
\subsection{Proof of Lemma \ref{lb_on_sdn}}
\begin{proof}
In order to show the desired lower bound on $\sdn[D_\star]{\epsilon}[\mathcal{H}(\pi)]$, it is sufficient to verify the implication in Eq.~\ref{sdn_implication} from Definition~\ref{sdn_definition} with
\begin{align*}
m & = \frac{\epsilon}{4} \cdot \left( \sup_{q: \E_{D_0} q^2(\bm T) = 1} \P_{D \sim \unif{\mathcal H}} \left[ \left| \E_{D} q(\bm T) - \E_{D_0} q(\bm T) \right| > \frac{\epsilon}{2}\right] \right)^{-1}.
\end{align*}
To this goal consider any $\mathcal{S} \subset \mathcal{H}(\pi)$ with $|\mathcal{S}| \geq m^{-1} \cdot |\mathcal{H}(\pi)|$ and any $q$ with $\E_{D_\star} q^2(\bm T) = 1$. First we observe that,
\begin{align}
\frac{\left| D \in \mathcal{S}: | \E_{D} q(\bm T) - \E_{D_\star} q(\bm T) | > \frac{\epsilon}{2} \right| }{|\mathcal{S}|} & \leq \frac{\left| D \in \mathcal{H}(\pi): | \E_{D} q(\bm T) - \E_{D_\star} q(\bm T) | > \frac{\epsilon}{2} \right| }{|\mathcal{S}|}\nonumber \\
& = \frac{|\mathcal{H}(\pi)|}{|\mathcal{S}|} \cdot \P_{D \sim \unif{\mathcal H(\pi)}} \left[ \left| \E_{D} q(\bm T) - \E_{D_\star} q(\bm T) \right| > \frac{\epsilon}{2}\right] \nonumber \\
& \leq \frac{\epsilon}{ 4} \label{fraction_bound}.
\end{align} 
Consider any $D \in \mathcal{H}(\pi)$. Hence, there exist hypercube vectors $\bm v_1, \bm v_2, \cdots \bm v_K \in \{\pm 1\}^d$ such that $D = D_{\bm v_{\pi(1)} \otimes v_{\pi(2)} \cdots \otimes v_{\pi(k)}}$. We first prove a worst case upper bound on $| \E_{D} q(\bm T) - \E_{D_\star} q(\bm T)|$ when $D_\star = D_0$:
\begin{align*}
|\E_{D} q(\bm T) - \E_{D_0} q (\bm T)| & \explain{}{=} \left| \E_{D_0} \left( \frac{\diff \P_{D}}{\diff \P_{D_0}}(\bm T) -1 \right) q(\bm T) \right| \\
& \explain{(a)}{\leq} \sqrt{\E_{D_0} q^2(\bm T)} \cdot \sqrt{\E_{D_0} \left( \frac{\diff \P_{D}}{\diff \P_{D_0}}(\bm T) -1 \right)^2} \\
& =  \sqrt{\E_{D_0} \left( \frac{\diff \P_{D}}{\diff \P_{D_0}}(\bm T)  \right)^2 - 1 } \\
& = \left( \E_{D_0} \exp \left( \|\bm T\|^2 - \|\bm T - d^{-\frac{k}{2}} \bm v_{\pi(1)} \otimes v_{\pi(2)} \dots \otimes v_{\pi(k)} \|^2 \right) - 1 \right)^{\frac{1}{2}} \\
& = \left( \E_{D_0} \exp \left( 2\ip{\bm T}{d^{-\frac{k}{2}}\bm v_{\pi(1)} \otimes v_{\pi(2)} \dots \otimes v_{\pi(k)}}  - 1\right) - 1 \right)^{\frac{1}{2}} \\
& \explain{(b)}{=} \sqrt{e-1} \leq 2.
\end{align*}
In the above display, in the equation marked (a), we applied Cauchy-Schwarz inequality and in the step marked (b), we noted that under $D_0$, the entries of $\bm T$ are i.i.d.~$\gauss{0}{1}$ and used the formula for the Gaussian moment generating function. Hence we have obtained, for any $D \in \mathcal{H}(\pi)$
\begin{align}
\label{worst_case_discripency}
|\E_{D} q(\bm T) - \E_{D_0} q (\bm T)| & \leq 2.
\end{align}
An exactly analogous calculation shows that:
\begin{align*}
    |\E_{D} q(\bm T) - \E_{\overline{D}(\pi)} q (\bm T)| & \leq \sqrt{e-1} \leq 2.
\end{align*}
We can now verify the implication in Eq.~\ref{sdn_implication}:
\begin{align*}
\frac{\sum_{D\in \mathcal{S}} \left| \E_{D} q(\bm T) - \E_{D_\star} q(\bm T) \right|}{|\mathcal{S}|}  & \leq  \frac{\epsilon}{2}  + \frac{\sum_{D: | \E_{D} q(\bm T) - \E_{D_\star} q(\bm T)| > \frac{\epsilon}{2}}  \left| \E_{D} q(\bm T) - \E_{D_\star} q(\bm T) \right|}{|\mathcal{S}|}  \\
& \explain{(a)}{\leq} \frac{\epsilon}{2} + \frac{\epsilon}{4} \cdot 2\\
& \leq \epsilon.
\end{align*}
In the above display, the step marked (a) is a consequence of Eqs.~\ref{fraction_bound} and~\ref{worst_case_discripency}. This verifies the implication in Eq.~\ref{sdn_implication} and completes the proof of this lemma. 
\end{proof}

\section{Fourier Analytic Tools}
This section collects some basic facts from Fourier analysis on the Boolean hypercube $\{\pm 1\}^d$ and the Gaussian Hilbert space $\mathcal{L}_2(\gauss{\bm 0}{\bm I_d})$. Our reference for all these results is the book of \citet{o2014analysis}.
\subsection{Fourier Analysis on $\{\pm 1\}^d$}
\label{fourier_boolean_appendix}
Consider the class of arbitrary functions $f: \{\pm 1\}^d \mapsto \R$:
\begin{align*}
\BoolSpace{d} & = \{ f: \{\pm 1\}^d \mapsto \R\}.
\end{align*} The following is a orthonormal basis for $\BoolSpace{d}$: 
\begin{align*}
\basis{\BoolSpace{d}} & = \left\{\bm z^{\bm r} \explain{def}{=} \prod_{i=1}^d z_i^{r_i}: \bm r \in \{0,1\}^d\right\}.
\end{align*}
The above basis is orthonormal in the following sense:
\begin{align*}
\E_{\unif{\pm 1}^{d}} \bm Z^{\bm r} \bm Z^{\bm s} & = \begin{cases} 0: & \bm r \neq \bm s\\ 1: & \bm r = \bm s \end{cases}.
\end{align*}
Hence given any function  $f \in \BoolSpace{d}$ has a unique representation of the form:
\begin{align*}
f(\bm z) & = \sum_{\bm r \in \{0,1\}^d} \hat{f}(\bm r)  \bm z^{\bm r}.
\end{align*}
The coefficients $\hat{f}(\bm r)$ are called the Fourier coefficients of $f$. 
This representation satisfies the usual Parseval's identity:
\begin{align*}
\E_{\unif{\{\pm 1\}^d}} f^2(\bm Z) & = \sum_{\bm r \in \{0,1\}^d } \hat{f}^2(\bm r).
\end{align*}
For any $\lambda \geq 1$, we define the operator $\smooth{\cdot}{\lambda}: \BoolSpace{d} \mapsto \BoolSpace{d}$ as follows:
\begin{align*}
\smooth{f}{\lambda}(\bm z) & \explain{def}{=} \sum_{\bm r \in \{0,1\}^d } \hat{f}(\bm r) \cdot \left( \frac{1}{\lambda} \right)^{\|\bm r\|_1} \cdot   \bm z^{\bm r}.
\end{align*}
The operator $\smooth{\cdot}{\lambda}$ smooths a function by down weighting the high degree Fourier coefficients. 
Hypercontractivity Theorems control $L_q$ norm of the random variable $\smooth{f}{\lambda}(\bm Z)$  in terms of $L_p$ norm of $f(\bm Z)$ where $\bm Z \sim \unif{\{\pm 1\}^d}$ where $q \geq p$ for a suitable value of $\lambda$. Our results use the $(2,q)$-Hypercontractivity Theorem stated below. 
\begin{fact}[$(2,q)$-Hypercontractivity Theorem] \label{hypercontractive_fact} For any $f \in \BoolSpace{d}$, we have,
	\begin{align*}
	\E_{\unif{\{\pm 1 \}^d}} \left[ \smooth{f}{\sqrt{q-1}}(\bm Z)^q \right]  & \leq  \left( \E_{\unif{\{\pm 1 \}^d}}[ f^2(\bm Z)] \right)^{\frac{q}{2}}.
	\end{align*}
\end{fact}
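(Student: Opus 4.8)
The plan is to prove Fact~\ref{hypercontractive_fact} by the classical two-step argument: first establish the inequality for a single Rademacher variable (the \emph{two-point inequality}), and then tensorize over the $d$ coordinates by induction on $d$. Throughout, set $\rho = 1/\sqrt{q-1}$ and note $\rho \le 1$ since $q \ge 2$. The operator $\smooth{\cdot}{\sqrt{q-1}}$ multiplies the Fourier coefficient of $\bm z^{\bm r}$ by $(\sqrt{q-1})^{-\|\bm r\|_1} = \prod_i \rho^{r_i}$, so it factorizes as a tensor product of the one-coordinate versions; this is exactly the structural fact that makes tensorization work.

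\textbf{Step 1 (Two-point inequality).} For a single Rademacher $Z$ and reals $a,b$, I would prove
\begin{align*}
\E_Z \big| a + \rho b Z \big|^q & \le \big( a^2 + b^2 \big)^{q/2} = \big( \E_Z |a+bZ|^2 \big)^{q/2}.
\end{align*}
By the symmetry $Z \mapsto -Z$ and homogeneity, it suffices to treat $a \ge |b| \ge 0$ with $a = 1$, reducing to $\tfrac12\big[(1+\rho t)^q + (1-\rho t)^q\big] \le (1+t^2)^{q/2}$ for $t \in [0,1]$; the complementary range $|b| > a$ is dispatched by a short case analysis using $\rho \le 1$, and is in any case contained in the statement of the two-point inequality in O'Donnell's book. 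Expanding both sides in power series, the inequality reduces to the coefficientwise comparison $\binom{q}{2j}\,\rho^{2j} \le \binom{q/2}{j}$ for all integers $j \ge 0$, i.e.\ $\binom{q}{2j} \le (q-1)^j \binom{q/2}{j}$, an elementary inequality valid for $q \ge 2$. In our application $q = u \in \N$: for even $u$ the series terminate and this is purely algebraic, while for odd $u$ one invokes the same two-point estimate together with a convexity argument to handle $|\cdot|^q$.

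\textbf{Step 2 (Tensorization by induction on $d$).} Assume the bound holds on $\{\pm 1\}^{d-1}$. Given $f \in \BoolSpace{d}$, write $\bm z = (z_1, \bm z')$ with $\bm z' = (z_2,\dots,z_d)$ and decompose $f(\bm z) = g(\bm z') + z_1\, h(\bm z')$, where $g,h \in \BoolSpace{d-1}$. Since the operator factorizes, $\smooth{f}{\sqrt{q-1}}(\bm z) = \smooth{g}{\sqrt{q-1}}(\bm z') + \rho\, z_1\, \smooth{h}{\sqrt{q-1}}(\bm z')$, the right-hand operators now acting on $\BoolSpace{d-1}$. Conditioning on $\bm z'$ and applying Step~1 with $a = \smooth{g}{\sqrt{q-1}}(\bm z')$, $b = \smooth{h}{\sqrt{q-1}}(\bm z')$ gives $\E_{z_1}\big| \smooth{f}{\sqrt{q-1}}(\bm z)\big|^q \le \big( \smooth{g}{\sqrt{q-1}}(\bm z')^2 + \smooth{h}{\sqrt{q-1}}(\bm z')^2 \big)^{q/2}$. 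Taking $\E_{\bm z'}$, then Minkowski's inequality in $L_{q/2}(\{\pm 1\}^{d-1})$ (valid as $q/2 \ge 1$), and finally the induction hypothesis on $g$ and $h$,
\begin{align*}
\E_{\unif{\{\pm 1\}^d}}\big| \smooth{f}{\sqrt{q-1}}(\bm Z)\big|^q & \le \Big( \big\| \smooth{g}{\sqrt{q-1}}\big\|_q^2 + \big\| \smooth{h}{\sqrt{q-1}}\big\|_q^2 \Big)^{q/2} \le \big( \|g\|_2^2 + \|h\|_2^2 \big)^{q/2} = \big( \E_{\unif{\{\pm 1\}^d}} f^2(\bm Z) \big)^{q/2},
\end{align*}
where the last equality is Parseval together with the vanishing of the cross term $\E\, z_1\, g(\bm z')h(\bm z')$. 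The base case $d=0$ is trivial, completing the induction.

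\textbf{Main obstacle.} The genuinely nontrivial ingredient is Step~1: the tensorization in Step~2 is a soft $L^p$-space manipulation, whereas the two-point inequality requires the sharp binomial comparison $\binom{q}{2j} \le (q-1)^j\binom{q/2}{j}$ (plus a convexity step for non-even $q$). This is precisely where the value $\rho = 1/\sqrt{q-1}$ is forced and cannot be improved.
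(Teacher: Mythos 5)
The paper does not prove Fact~\ref{hypercontractive_fact}: it is stated as a fact and cited to the textbook \citet{o2014analysis}, so there is no in-paper proof to compare against. Your argument is the standard proof from exactly that reference: the single-coordinate two-point inequality combined with tensorization by induction on $d$ using Minkowski's inequality in $L^{q/2}$. The tensorization step you give is complete and correct, including the factorization $\smooth{f}{\sqrt{q-1}}(\bm z)=\smooth{g}{\sqrt{q-1}}(\bm z')+\rho z_1\smooth{h}{\sqrt{q-1}}(\bm z')$ and the Parseval identity $\|f\|_2^2=\|g\|_2^2+\|h\|_2^2$.

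The one place you leave genuine work undone is the two-point inequality itself outside the easy regime. Your power-series reduction to the coefficientwise bound $\binom{q}{2j}\rho^{2j}\le\binom{q/2}{j}$ is valid only after the absolute value on $(1-\rho t)^q$ has been removed, which requires $\rho t\le 1$ and hence covers only $a\ge|b|$; and the termination argument covers only even integer $q$. You acknowledge both gaps but dispatch them with ``a short case analysis'' and ``a convexity argument,'' neither of which is actually supplied. Since the paper applies the inequality (in the display preceding Eq.~\ref{hypercontractive_conclusion}) with $q=u$ ranging over all integers $u\ge 2$ and with an absolute value on the left, the odd-$u$ case is needed and is not a throwaway: the Bonami-style series argument genuinely breaks down there, and one must either invoke the general real-$q$ two-point estimate from the cited book or handle $|\cdot|^q$ directly. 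This is a contained but real gap in an otherwise correct rendering of the textbook proof that the paper chooses to delegate by citation rather than reproduce.
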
  
\subsection{Fourier Analysis on the Gauss Space }
\label{fourier_gauss_appendix}
Consider the functional space $\GaussSpace{d}$ defined as follows:
\begin{align*}
\GaussSpace{d} & \explain{def}{=} \left\{f: \R^d \mapsto \R, \E_{\gauss{\bm 0}{\bm I_d}} f^2(\bm Z) < \infty \right\}.
\end{align*}
The multivariate Hermite polynomials for a complete orthonormal basis for $\GaussSpace{d}$.  These are defined as follows: for any $\bm c \in \W^d$, define,
\begin{align*}
H_{\bm c} (\bm z) & = \prod_{i=1}^d H_{c_i} (z_i).
\end{align*}
In the above display, for any $k \in \W$, $H_k$ are the usual (univariate) orthonormal Hermite polynomials with the property $$\E_{\gauss{0}{1}} H_k(Z) H_l(Z) = \begin{cases} 0: & k \neq l \\ 1: & k=l \end{cases}. $$
The orthonormality property is inherited by the multivariate Hermite polynomials and we have,
$$\E_{\gauss{\bm 0}{\bm I_d}} H_{\bm c}(\bm Z) H_{\bm d}(\bm Z) = \begin{cases} 0: & \bm c \neq \bm d \\ 1: & \bm c=\bm d \end{cases}. $$
Since these polynomials form an orthonormal basis of $\GaussSpace{d}$ any $f \in \GaussSpace{d}$ admits an expansion of the form: 
\begin{align*}
f(\bm z) & = \sum_{\bm c \in (\mathbb N \cup \{0\})^d} \hat{f}(\bm c) H_{\bm c}(\bm z).
\end{align*}
In the above display $\hat{f}(\bm c) \in \R$ are the Fourier coefficients of $f$. They satisfy the usual Parseval's relation:
\begin{align*}
\sum_{\bm c \in \W^d} \hat f^2(\bm c) & = \E_{\gauss{\bm 0}{\bm I_d}} f^2(\bm Z).
\end{align*}
A particular desirable property of the univariate Hermite polynomials is the following: for any $\mu \in \R, k \in \W$ we have,
\begin{align*}
\E_{\gauss{0}{1}} H_k(\mu + Z) & = \frac{\mu^k}{\sqrt{k!}}.
\end{align*}
This implies the following property of multivariate Hermite polynomials which will be particularly useful for us:
\begin{fact} \label{hermite_special_property} For any $\bm \mu \in \R^d$ and any $\bm c \in \W^{d}$, we have,
\begin{align*}
\E_{\gauss{\bm 0}{\bm I_d}} H_k(\bm \mu + \bm Z) & = \frac{\bm \mu ^{\bm c}}{\sqrt{\bm c!}}.
\end{align*}
In the above display, we are using the following notation:
\begin{align*}
\bm \mu^{\bm c} \explain{def}{=} \prod_{i=1}^m \mu_i^{c_i}, \; \bm c! \explain{def}{=} \prod_{i=1}^m( c_i !).
\end{align*}
\end{fact}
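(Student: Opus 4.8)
The plan is to deduce the multivariate identity directly from the univariate one displayed just above it, using nothing more than the product structure of the multivariate Hermite basis and the coordinate-wise independence of a standard Gaussian vector.

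First I would unfold the definition $H_{\bm c}(\bm z) = \prod_{i=1}^d H_{c_i}(z_i)$, so that $H_{\bm c}(\bm \mu + \bm Z) = \prod_{i=1}^d H_{c_i}(\mu_i + Z_i)$ for any fixed $\bm \mu \in \R^d$. Since $\bm Z \sim \gauss{\bm 0}{\bm I_d}$ has mutually independent coordinates $Z_i \sim \gauss{0}{1}$, the factors $H_{c_i}(\mu_i + Z_i)$ are independent random variables, and therefore the expectation of the product splits as a product of expectations:
\begin{align*}
\E_{\gauss{\bm 0}{\bm I_d}} H_{\bm c}(\bm \mu + \bm Z) = \prod_{i=1}^d \E_{\gauss{0}{1}} H_{c_i}(\mu_i + Z_i).
\end{align*}
Next I would apply the univariate property $\E_{\gauss{0}{1}} H_k(\mu + Z) = \mu^k / \sqrt{k!}$ to each of the $d$ factors, which gives $\prod_{i=1}^d \mu_i^{c_i}/\sqrt{c_i!}$; by the entry-wise power and factorial conventions ($\bm \mu^{\bm c} = \prod_i \mu_i^{c_i}$, $\bm c! = \prod_i c_i!$) this is precisely $\bm \mu^{\bm c}/\sqrt{\bm c!}$, completing the argument.

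There is essentially no obstacle: the only point requiring attention is the bookkeeping, namely checking that the normalization $1/\sqrt{\bm c!}$ of the multivariate polynomial coincides with the product $\prod_i 1/\sqrt{c_i!}$ of univariate normalizations, and recalling that it is exactly the independence of the coordinates of $\gauss{\bm 0}{\bm I_d}$ that licenses the factorization of the expectation. If one preferred not to take the univariate identity for granted, it follows in one line from the generating-function identity $\sum_{k \ge 0} t^k H_k(z)/\sqrt{k!} = e^{tz - t^2/2}$: substituting $z = \mu + Z$ with $Z \sim \gauss{0}{1}$ and taking expectations collapses the right side to $e^{t\mu}$, and comparing the coefficients of $t^k$ yields $\E_{\gauss{0}{1}} H_k(\mu+Z) = \mu^k/\sqrt{k!}$.
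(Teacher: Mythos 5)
Your proof is correct and matches the paper's (implicit) reasoning exactly: the paper simply asserts that the univariate identity $\E_{\gauss{0}{1}} H_k(\mu + Z) = \mu^k/\sqrt{k!}$ "implies" the multivariate fact, and the factorization you spell out — product structure of $H_{\bm c}$ plus independence of the coordinates of $\gauss{\bm 0}{\bm I_d}$ — is precisely the intended argument. Your optional generating-function derivation of the univariate identity is also correct and a welcome supplement, since the paper takes that identity as given.
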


\bibliographystyle{plainnat}
\bibliography{ref}

\end{document}